\newtheorem{theorem}{Theorem}[section]
\newtheorem{lemma}[theorem]{Lemma}
\newtheorem{proposition}[theorem]{Proposition}
\newtheorem{corollary}[theorem]{Corollary}
\theoremstyle{definition}
\newtheorem{definition}[theorem]{Definition}
\newtheorem{notation}[theorem]{Notation}
\newtheorem{assumption}{Assumption}
\theoremstyle{remark}
\newtheorem{remark}[theorem]{Remark}
\numberwithin{equation}{section}
\newcommand{\Z}{\mathbb{Z}}
\newcommand{\R}{\mathbb{R}}
\newcommand{\size}{\textrm{size}}
\newcommand{\sizee}{\textrm{\emph{size}}}
\newcommand{\energy}{\textrm{energy}}
\newcommand{\energye}{\textrm{\emph{energy}}}
\begin{document}

\title{On a biparameter maximal multilinear operator}

\author{Peter M. Luthy}

\address{Department of Mathematics, Washington University in St. Louis, MO 63130}

\email{luthy@math.wustl.edu}

\thanks{This work was completed by the author while he was a graduate student at Cornell University.}

\keywords{Harmonic analysis, time-frequency analysis, singular integral, maximal operator, ergodic theory, AKNS systems}

\begin{abstract}
It is well-known that estimates for maximal operators and questions of pointwise convergence are strongly connected.  In recent years, convergence properties of so-called `non-conventional ergodic averages' have been studied by a number of authors, including Assani, Austin, Host, Kra, Tao, and so on.  In particular, much is known regarding convergence in $L^2$ of these averages, but little is known about pointwise convergence. In this spirit, we consider the pointwise convergence of a particular ergodic average and study the corresponding maximal trilinear operator (over $\mathbb{R}$, thanks to a transference principle).  Lacey in \cite{LBMO} and Demeter, Tao, and Thiele in \cite{DTT} have studied maximal multilinear operators previously; however, the maximal operator we develop has a novel bi-parameter structure which has not been previously encountered and cannot be estimated using their techniques.  We will carve this bi-parameter maximal multilinear operator using a certain Taylor series and produce non-trivial H\"{o}lder-type estimates for one of the two ``main'' terms by treating it as a singular integrals whose symbol's singular set is similar to that of the Biest operator, studied by Muscalu, Tao, and Thiele in \cite{MTTBiest1} and \cite{MTTBiest2}.
\end{abstract}

\maketitle

\section{Overview and Structure}

In this paper, we begin the study of boundedness properties of the biparameter maximal multilinear operator
\begin{equation}\label{luthyformula1}
(f_1,f_2,f_3)\mapsto\sup_{h_1,h_2}\frac{1}{h_1h_2}\int_{-h_1}^{h_1}\int_{-h_2}^{h_2}|f_1(x-t)f_2(x+s+t)f_3(x-s)|dsdt,
\end{equation}
where the term biparameter refers to the fact that the supremum involves two parameters, $h_1$ and $h_2$.  This operator arises from certain convergence questions in ergodic theory; the connection will be described below.  Similar mono-parameter maximal bilinear operators have been studied by Michael Lacey, \cite{LBMO}; this work was later generalized to mono-parameter $n$-linear operators by Ciprian Demeter, Terence Tao, and Christoph Thiele, \cite{DTT}.  Their work establishes, after much effort, that many mono-parameter maximal multilinear operators, including (\ref{luthyformula1}) with $\sup_{h_1,h_1>0}$ replaced by $\sup_{h_1=h_2>0}$, can be estimated using singular integral techniques related to the bilinear Hilbert transform.  In this paper we give a proof-of-concept that operators of the form (\ref{luthyformula1}) should be estimatable using techniques related to the so-called Biest operator studied by Camil Muscalu, Terence Tao, and Ciprian Demeter, \cite{MTTBiest1}, \cite{MTTBiest2}.  The Biest operator is related to the study of a certain class of dynamical systems coming from families of integrable PDE.

In particular, we present a study of (\ref{luthyformula1}) by performing a certain time-frequency discretization process on this operator; the operator (\ref{luthyformula1}) can be carved into two main pieces and, modulo analyzing certain ``error terms'' coming from a Taylor series argument, the present work will show that the simpler of these two pieces can be studied in terms of the following discrete time-frequency model operator: for finite families of rank 1 tri-tiles $\vec{\mathbf{P}}$ and $\vec{\mathbf{Q}}$ and functions $\phi_P^t$ and $\phi_Q^t$ for $t=1,2,3$ which are $L^2$-normalized and adapted to the tri-tiles in the appropriate way, the model is given by
\[
\sum_{P\in\vec{\mathbf{P}}}\frac{1}{|I_P|^{1/2}}\langle f_3, \phi_P^1\rangle\left\langle B_P(f_1,f_2), \phi_P^2\right\rangle\left\langle f_41_{|I_P|\ge 2^{N_2(x)}},\phi_P^3\right\rangle,
\]
where
\[
B_P(f_1,f_2):=\sum_{Q\in\vec{\mathbf{Q}}:\omega_{Q_3}\subset \omega_{P_2}}\frac{1}{|I_Q|^{1/2}}\langle f_1, \phi_{Q}^1\rangle\langle f_2, \phi_{Q}^2\rangle\phi_{Q}^3
\]
and $N_2$ is an arbitrary integer-valued function on $\mathbb{R}$.  The more complicated of these two pieces will be described in later work.  Later sections of the paper will describe the aforementioned discretization process more fully.  Additionally, we will show the following:
\begin{theorem}
The above model operator satisfies the same restricted weak-type estimates as the Biest operator studied in \cite{MTTBiest1}, \cite{MTTBiest2}.  In particular, the above model operator is of restricted weak-type for all 4-tuples $(1/p_1,1/p_2,1/p_3,1/p_4)$ in the interior of the convex hull of the following twelve points in $\mathbb{R}^4$:
\[
\begin{array}{cccccc} 
(1,-\frac{3}{2},\frac{1}{2},1) & (1,-\frac{3}{2},1,\frac{1}{2})&(-\frac{3}{2},1,\frac{1}{2},1) & (-\frac{3}{2},1,1,\frac{1}{2}),\\
\\
(1,0,-\frac{1}{2},\frac{1}{2}) & ( 1,0,-\frac{1}{2},\frac{1}{2}) & ( \frac{1}{2},0,-\frac{1}{2},1) & (0,\frac{1}{2},-\frac{1}{2},1),  \\
\\
(1,0,\frac{1}{2},-\frac{1}{2}) & (0,1,\frac{1}{2},-\frac{1}{2}) & (\frac{1}{2},0,1,-\frac{1}{2}) & (0,\frac{1}{2},1,-\frac{1}{2}).
\end{array}
\]
One such interior point is given by $(1/2,1/2,1/2,-1/2)$.  This causes a simplified variant of (\ref{luthyformula1}) to satisfy a strong $L^2\times L^2\times L^2\rightarrow L^{2/3}$ estimate which cannot be produced using H\"{o}lder's inequality and known estimates. 
\end{theorem}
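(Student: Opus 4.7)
The plan is to follow the blueprint of the restricted weak-type analysis of the Biest operator from \cite{MTTBiest1,MTTBiest2}, adapting it to accommodate the two new features of our model: the inner sum $B_P(f_1,f_2)$ nested through the condition $\omega_{Q_3}\subset \omega_{P_2}$, and the dynamic cutoff $1_{|I_P|\ge 2^{N_2(x)}}$ sitting inside the third factor. First, I would dualize the model operator against a function $f_4$ to obtain a $4$-linear form $\Lambda(f_1,f_2,f_3,f_4)$, specialize to $f_i=1_{E_i}$, and fix a major subset $E_j'\subset E_j$ (with $j$ depending on which extremal point of the convex hull one is targeting). The major subset is obtained by excising the set where certain maximal and square-function expressions built from the other $E_i$'s are large, so that on $E_j'$ one has the pointwise control needed to absorb the bad scales.

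Next I would introduce the standard size and energy functionals for the outer family $\vec{\mathbf{P}}$ and for the inner family $\vec{\mathbf{Q}}$, separately for each of the three slots in each family, exactly as in \cite{MTTBiest1}. The sizes measure BMO-type behavior along trees and the energies measure $L^2$-type behavior across collections of trees. I would then perform the now-standard double stopping-time selection, decomposing $\vec{\mathbf{P}}=\bigsqcup_n\vec{\mathbf{P}}_n$ and $\vec{\mathbf{Q}}=\bigsqcup_n\vec{\mathbf{Q}}_n$ into pieces on which the relevant size is dyadically fixed and on which the number of selected trees is controlled by energy. Standard $L^2$-orthogonality and John--Nirenberg-type inequalities yield the size and energy bounds in terms of the measures $|E_i|$ and the major subset $E_j'$.

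The heart of the matter is the tree estimate. For a single outer tree $T\subset\vec{\mathbf{P}}$, I would bound $|\Lambda_T|$ by a product of outer sizes and energies in the $f_3$ and $f_4$ slots, with the $f_2$ slot replaced by a localized $L^2$-type norm of $B_T(f_1,f_2):=\sum_{P\in T}\langle B_P(f_1,f_2),\phi_P^2\rangle \phi_P^2/|I_P|^{1/2}$ on $I_T$. The cutoff $1_{|I_P|\ge 2^{N_2(x)}}$ is handled by splitting $T$ according to dyadic level sets of $N_2$ and absorbing the result into a localized maximal-function bound on $f_4$, exactly in the spirit of the Lacey--Thiele treatment of the Carleson operator. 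The inner factor $B_T(f_1,f_2)$ is then itself expanded as a sum over inner trees $T'\subset\vec{\mathbf{Q}}$ whose top frequency intervals are compatible with the outer tree's frequency data; localizing each such $T'$ to $I_T$ and invoking the inner tree estimate yields the bound in terms of inner sizes and energies. Summing geometric series in both selection levels produces
\[
|\Lambda|\lesssim \prod_i (\mathrm{size}_i)^{\theta_i}(\mathrm{energy}_i)^{1-\theta_i},
\]
with $\theta_i\in[0,1]$ free parameters. Substituting the size and energy bounds in terms of $|E_i|$ and playing with $\theta_i$ sweeps out exactly the twelve extremal points listed, and multilinear interpolation / restricted weak-type upgrading yields the interior estimates, with $(1/2,1/2,1/2,-1/2)$ dualizing to the claimed strong $L^2\times L^2\times L^2\to L^{2/3}$ bound for the simplified variant of \eqref{luthyformula1}.

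The principal obstacle I foresee is Step 3, the nested tree estimate: the inner family $\vec{\mathbf{Q}}$ is filtered by the outer tile frequency data via $\omega_{Q_3}\subset \omega_{P_2}$, so the inner trees must be organized coherently across all outer tiles $P\in T$ simultaneously. One cannot simply apply the Biest tree estimate off the shelf because the selection of inner trees depends on the outer tree, and the interaction of the localization $1_{I_T}$ with the inner energies needs a careful "inner-outer" orthogonality argument. Resolving this cleanly is the step that genuinely exploits the Biest-like geometry of the frequency singularity and is what prevents the simpler $BHT$-style techniques of \cite{LBMO,DTT} from applying.
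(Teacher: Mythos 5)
Your outline reproduces the broad architecture of the paper's argument (dualization, major subsets via an exceptional set, sizes and energies, stopping-time forests, a single-tree estimate, then interpolation), but it has two substantive gaps, both located exactly where this model differs from a routine application of \cite{MTTBiest1}, \cite{MTTBiest2}. First, you flag the nested tree estimate --- organizing the inner family $\vec{\mathbf{Q}}$ coherently across all $P$ in an outer tree --- as the principal obstacle and leave it unresolved. The paper does not perform a double stopping time on $\vec{\mathbf{P}}$ and $\vec{\mathbf{Q}}$ at all: it treats the composite sequence $a_{P_2}=\langle B_P(f_1,f_2),\phi_P^2\rangle$ as a single object attached to the outer tiles and imports its size and energy bounds wholesale from Lemmas 9.1 and 9.2 of \cite{MTTBiest2}, which were proved there precisely for sequences of this nested form. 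So the ``inner--outer orthogonality argument'' you say must be invented is already available off the shelf, and your plan as written re-derives the Biest machinery from scratch while omitting the one estimate you yourself identify as missing.

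Second, and more seriously, your treatment of the cutoff $1_{|I_P|\ge 2^{N_2(x)}}$ --- splitting the tree by level sets of $N_2$ and absorbing the result into a localized maximal-function bound on $f_4$ --- would not close. The difficulty is not the single-tree size bound but the energy bound: one must control $\sum_{T\in\mathcal{F}}|I_T|$ over a forest of strongly $3$-disjoint lacunary trees when the wave packets carry the pointwise scale truncation, and a maximal-function argument yields no usable orthogonality because the truncation destroys the Bessel-type inequality across the forest. The paper relies on Lemma 9.2 of \cite{DTT} (their main lemma, built on a Rademacher--Menshov theorem and a lemma of Bourgain), which gives $\left(\sum_{T\in\mathcal{F}}|I_T|\right)^{1/2}\lesssim |E_4|^{1/2}2^{-n}\left(2^{-n}|E_4|^{-1/2}\right)^{1/\mu}$; the extra $1/\mu$ loss is precisely what forces the final exponents to be only ``arbitrarily close to'' the twelve listed points and necessitates the $\epsilon$-shuffling between $\alpha_j$ and $\alpha_4$ when summing the geometric series. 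Your sketch contains no mechanism producing this estimate, and without it the claimed numerology cannot be completed. (A minor further point: the paper also replaces the corresponding size lemma of \cite{DTT} with an $L^q$-average substitute because of a suspected error there; any complete writeup needs some version of that replacement as well.)
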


This theorem will be the main ingredient in proving the main result, Theorem \ref{mainresult}.\\

Section 2 will provide motivation for the study of (\ref{luthyformula1}) and provide some context for the operators mentioned in the preceding paragraph.  It is hoped that this section will be readable by a fairly broad audience.  In Section 3 we carefully discuss how to produce the above model operator from (\ref{luthyformula1}) and point out explicitly the relationship to and differences from the Biest operator; we will frequently pause to give heuristic explanations before providing rigorous proofs.  In Section 4, we describe restricted weak-type interpolation which is a vital theorem in the analysis of multilinear operators when the target space is an $L^p$ space for $p<1$.  In Section 5 we provide size and energy estimates for the above model operator.  In Section 6 we describe how the size and energy estimates give the full range of estimates coming from the Biest operator.  In Section 7, we provide the main result.

\section{Motivation and Context}
\subsection{Pointwise Convergence and Maximal Operators}
For sake of exposition, we begin with some very well known results connecting pointwise convergence and estimates for maximal operators.\\

Given a sequence of functions, there is a variety of ways the sequence might converge: pointwise, in norm, weakly, and so on.  Pointwise convergence is na\"{i}vely the most ``natural'' but is difficult to work with in the framework of modern analysis.  With this in mind, we recall two classical theorems.
\begin{theorem}[Lebesgue Differentiation Theorem]\label{LDT}
If $B_r(x)$ denotes the ball of radius $r$ around $x$ in $\mathbb{R}^d$, then given $f\in L^p(\mathbb{R}^d)$ for $p\ge 1$, the average value of $f$ on $B_r(x)$ converges for a.e. $x$ to $f(x)$ as $r\rightarrow 0$.
\end{theorem}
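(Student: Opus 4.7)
The plan is to reduce the statement to a quantitative maximal inequality together with a density argument. Writing $A_r f(x) = \frac{1}{|B_r(x)|}\int_{B_r(x)} f(y)\,dy$, I want to show that $A_r f(x) \to f(x)$ for a.e.\ $x \in \R^d$ as $r \to 0$. Since the claim is local in $x$ and $L^p(\R^d) \subset L^1_{\mathrm{loc}}(\R^d)$ for $p \ge 1$, I would first truncate $f$ to a large ball and reduce the problem to the case $f \in L^1(\R^d)$.

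The main tool is the Hardy--Littlewood maximal operator $Mf(x) = \sup_{r>0} A_r(\abs{f})(x)$. First I would establish the weak-type $(1,1)$ inequality
\[
\abs{\{x \in \R^d : Mf(x) > \lambda\}} \le \frac{C_d}{\lambda}\,\lVert f\rVert_{L^1(\R^d)},
\]
for an absolute constant $C_d$ depending only on dimension. The proof of this bound is the principal obstacle and proceeds via a Vitali-type covering lemma: given any compact subset $K$ of the level set $\{Mf > \lambda\}$, each point $x \in K$ admits a ball $B_{r(x)}(x)$ whose average of $\abs{f}$ exceeds $\lambda$; one extracts from these a finite disjoint subcollection whose $5$-fold dilates cover $K$, and then estimates the measure of the dilates against $\lambda^{-1}\|f\|_{L^1}$.

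Once the weak-type bound is in hand, the pointwise convergence follows from a standard approximation argument. Fix $\epsilon > 0$ and choose $g \in C_c(\R^d)$ with $\lVert f - g\rVert_{L^1} < \epsilon$, which is possible by density. For continuous $g$ with compact support, uniform continuity gives $A_r g(x) \to g(x)$ uniformly, and hence for every $x$. Writing $f = g + (f-g)$ and applying the triangle inequality yields
\[
\limsup_{r\to 0}\,\abs{A_r f(x) - f(x)} \le M(f-g)(x) + \abs{(f-g)(x)}.
\]
Fixing $\eta > 0$, the set where the right-hand side exceeds $2\eta$ is contained in $\{M(f-g) > \eta\} \cup \{\abs{f-g} > \eta\}$, whose measure is bounded by $(C_d + 1)\epsilon/\eta$ via the weak-type bound for $M$ and Chebyshev's inequality. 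Sending $\epsilon \to 0$ shows the exceptional set has measure zero, and then $\eta \to 0$ along a countable sequence completes the proof.
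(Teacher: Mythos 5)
Your proposal is correct and is exactly the classical argument the paper alludes to: it explicitly notes that Theorem \ref{LDT} reduces to boundedness of the Hardy--Littlewood maximal operator (proved via a Vitali covering argument) together with convergence on a dense subclass. Since the paper cites this theorem as standard background rather than proving it, there is nothing further to reconcile.
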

\begin{theorem}[Carleson--Hunt]\label{CHT}
If $\mathbb{T}$ denotes the unit circle and $f\in L^p(\mathbb{T})$ for $p>1$, then the (symmetric) partial sums of the Fourier series for $f$ converge pointwise to $f$ almost everywhere.
\end{theorem}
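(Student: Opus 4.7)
The plan is to deduce the a.e.\ pointwise convergence from a weak-type maximal inequality for the Carleson maximal operator
\[
\mathcal{C}f(x):=\sup_N |S_N f(x)|,
\]
where $S_N f$ is the symmetric $N$-th partial sum of the Fourier series of $f$. The central task is to establish
\[
\bigl|\{x\in\mathbb{T}:\mathcal{C}f(x)>\lambda\}\bigr|\lesssim_p \left(\frac{\|f\|_p}{\lambda}\right)^p,\qquad p>1.
\]
Granted this, I would run the standard density argument: given $f\in L^p(\mathbb{T})$ and $\varepsilon>0$, split $f=g+h$ with $g$ a trigonometric polynomial and $\|h\|_p<\varepsilon$. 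Since $S_N g=g$ for all sufficiently large $N$, pointwise $\limsup_N |S_N f-f|\le \mathcal{C}h+|h|$. Chebyshev together with the maximal inequality bound the measure of $\{x:\limsup_N |S_N f(x)-f(x)|>\lambda\}$ by $C_p(\varepsilon/\lambda)^p$; letting $\varepsilon\to 0$ shows this set is null, and taking a countable sequence $\lambda_k\to 0$ yields a.e.\ convergence.

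The main obstacle, therefore, is the maximal inequality itself. The first step is to linearize: pick a measurable $N:\mathbb{T}\to\mathbb{Z}$ nearly attaining the supremum, then transfer to the real line (Coifman--Weiss) so that it suffices to bound
\[
T_N f(x)=\int_{\xi<N(x)}\widehat{f}(\xi)e^{2\pi i x\xi}\,d\xi
\]
from $L^p(\mathbb{R})$ to $L^{p,\infty}(\mathbb{R})$. The phase-plane approach of Lacey--Thiele (following Fefferman) then applies a Calder\'on reproducing formula to decompose $T_N$ into Schwartz wave packets adapted to dyadic tiles. For a collection $\vec{\mathbf{P}}$ of bi-tiles $P=P_1\cup P_2$ (each $P_j$ of area one, sharing a common time interval $I_P$ and splitting the frequency interval $\omega_P$ into two halves $\omega_{P_1}$, $\omega_{P_2}$) with $L^2$-normalized adapted bumps $\phi_{P_1}$, one obtains the discrete model
\[
T_N f(x)\approx \sum_{P\in\vec{\mathbf{P}}} \frac{1}{|I_P|^{1/2}}\langle f,\phi_{P_1}\rangle\, \phi_{P_1}(x)\, \mathbf{1}_{\omega_{P_2}}(N(x)).
\]

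The hard part is bounding this model sum, and the ingredients are exactly the ones that the present paper generalizes to the bi-parameter trilinear setting: (i) organize tiles into \emph{trees} with a common top, on each of which the partial sum behaves like a single modulated Calder\'on--Zygmund operator controlled by a BMO-type \emph{size}; (ii) control the cross-tree interaction via a Bessel-type \emph{energy}; (iii) run an iterative tree-selection algorithm producing families of trees with geometrically decaying size and energy; (iv) conclude by combining a single-tree estimate with a Carleson packing bound on the tops of selected trees, and invoke restricted weak-type interpolation (reviewed in Section~4 of this paper) to pass from tile-level estimates to the Lorentz-space bound above. The $p=1$ endpoint genuinely fails (Kolmogorov), which is consistent with the fact that the selection algorithm requires $p>1$ to terminate. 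This linear template is precisely what the rest of the paper extends to the trilinear bi-parameter operator \eqref{luthyformula1}.
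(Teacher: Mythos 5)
Your reduction of a.e.\ convergence to a weak-type $L^p$ bound for the Carleson maximal operator, via the density of trigonometric polynomials and the splitting $\limsup_N|S_Nf-f|\le \mathcal{C}h+|h|$, is correct and is exactly the route the paper indicates: it states Carleson--Hunt as a classical result and records only that it reduces to boundedness of the Carleson operator $C$ on a dense class, without proving that bound. Like the paper, you ultimately treat the deep ingredient --- the weak-type estimate itself, i.e.\ the Carleson/Hunt/Lacey--Thiele time-frequency argument with trees, sizes, energies, and the selection algorithm --- as a cited outline rather than proving it, so your proposal matches the paper's approach, with the understanding that steps (i)--(iv) are a summary of that known proof rather than a self-contained argument.
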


The proofs of the two theorems are very different, but both come down to proving the theorem for a dense class of functions and that a maximal operator is bounded.  Smooth functions serve as suitable dense function classes for both theorems.  For Theorem \ref{LDT}, the relevant maximal operator is the well-known Hardy--Littlewood maximal operator $M$,
\[
M(f)(x)=\sup_{r>0}\frac{1}{m(B_r)}\int_{B_r}|f(x+t)|dt,
\]
which is bounded from $L^p\rightarrow L^p$ for $p\in (1,\infty]$ and bounded from $L^1$ to weak-$L^1$;  for Theorem \ref{CHT}, this is the Carleson operator $C$,
\[
C(f)(x)=\sup_{N\in\mathbb{R}}\left|\int_{-\infty}^N\hat{f}(\xi)e^{2\pi ix\xi}d\xi\right|,
\]
which is bounded from $L^p$ to $L^p$ for $p\in(1,\infty)$.  That the Carleson-Hunt theorem is false for $p=1$ is a result of Kolmogorov from the 1920s and is reflected in the fact that $C$ does not satisfy a suitable $L^1$ estimate.  Hence proving estimates for maximal operators seems to be a main ingredient in proving pointwise convergence theorems.  The following partial converse of Stein says that this is fundamentally true:
\begin{theorem}[Stein, 1961\footnote{This theorem is true in much greater generality, but the requirement that $p\le 2$ cannot be dropped, in general.  See \cite[Theorem 1]{stein} for the exact statement.}]
Suppose that $T_n$ is a family of bounded linear operators on $L^p(\mathbb{T})$ for $p\in [1,2]$ which commute with translations (i.e. rotations of the circle).  Further, suppose that for each $f\in L^p$ and almost every $x$, $T_n(f)(x)$ converges pointwise.  Then the operator $f\mapsto \sup_n |T_nf|$ is bounded from $L^p$ to weak-$L^{p}$.
\end{theorem}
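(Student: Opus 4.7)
The approach is contradiction via Stein's randomization trick. Let $T^\ast f:=\sup_n|T_nf|$ and assume $T^\ast$ fails the weak-type $(p,p)$ inequality. I will construct a single $F\in L^p(\mathbb T)$ with $\mu\{x:T^\ast F(x)=+\infty\}>0$, contradicting the hypothesis that $T_nF$ converges a.e.\ (which forces $T^\ast F$ to be a.e.\ finite). The quantitative failure of weak type, combined with routine rescaling, produces a sequence of witnesses $g_k\in L^p$ and levels $M_k\to\infty$ such that the exceptional sets $E_k:=\{x:T^\ast g_k(x)>M_k\}$ obey $\mu(E_k)\ge\delta$ for some fixed $\delta>0$ while $\sum_k\|g_k\|_p^p<\infty$; a careful choice of the failing constants in the weak-type bound balances these two demands.

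Introduce independent uniform translates $y_k\in\mathbb T$ and independent Rademacher signs $\epsilon_k$, and form
\[
F(x):=\sum_k\epsilon_k\,g_k(x-y_k).
\]
The hypothesis $p\le 2$ enters decisively through Khintchine's inequality: pointwise in $x$,
\[
\mathbb E_\epsilon|F(x)|^p\lesssim\Bigl(\sum_k|g_k(x-y_k)|^2\Bigr)^{p/2}\le\sum_k|g_k(x-y_k)|^p,
\]
the second step being the embedding $\ell^2\hookrightarrow\ell^p$ valid for $p\le 2$. Integrating in $x$ and applying Fubini yields $\mathbb E\|F\|_p^p\lesssim\sum_k\|g_k\|_p^p<\infty$, so $F\in L^p$ almost surely.

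To force $T^\ast F$ to blow up, I use translation invariance of each $T_n$: the exceptional set of $g_k(\cdot-y_k)$ is $E_k+y_k$, still of measure at least $\delta$. For $x\in E_k+y_k$ choose $n=n(k,x)$ with $|T_n(g_k(\cdot-y_k))(x)|>M_k$; conditioning on all randomness except $\epsilon_k$, a sign flip shifts $T_nF(x)$ by more than $2M_k$, so conditionally $\mathbb P(|T_nF(x)|>M_k)\ge 1/2$. Integrating produces $\mathbb E\,\mu\{T^\ast F>M_k\}\ge\delta/2$. Setting $X_k:=\mu\{T^\ast F>M_k\}\in[0,1]$, the reverse Markov inequality gives $\mathbb P(X_k\ge\delta/4)\ge\delta/4$, and reverse Fatou upgrades this to $\mathbb P(X_k\ge\delta/4\text{ for infinitely many }k)\ge\delta/4$. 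Since $X_k$ is non-increasing in $k$, this event equals $\{X_k\ge\delta/4\text{ for all }k\}$, and monotone convergence yields $\mu\{T^\ast F=+\infty\}=\lim_k X_k\ge\delta/4$ with positive probability; fixing such a realization produces the required $F$ and the contradiction. The main obstacle is exactly the Khintchine step: the hypothesis $p\le 2$ is used there via $\ell^2\hookrightarrow\ell^p$, and for $p>2$ the construction collapses because the $L^p$ moment of a random sum is then controlled by the $\ell^2$ square function, which is far too large to be absorbed into $\sum\|g_k\|_p^p$, in accordance with the footnoted failure of the theorem in that range.
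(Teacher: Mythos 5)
The paper does not actually prove this statement --- it is quoted with a citation to Stein's 1961 paper --- so your proposal can only be measured against the classical argument, whose skeleton (random signs and random translations, Khintchine's inequality with $p\le 2$ entering through $\ell^2\hookrightarrow\ell^p$, and a Borel--Cantelli/Fatou style conclusion producing one bad $F\in L^p$) you have correctly reproduced. The gap is in the very first reduction. Failure of the weak-type $(p,p)$ inequality does \emph{not}, by ``routine rescaling,'' yield witnesses $g_k$ with $\sum_k\|g_k\|_p^p<\infty$ whose exceptional sets satisfy $\mu(E_k)\ge\delta$ for a fixed $\delta>0$. Replacing $g$ by $cg$ rescales the level but leaves the measure of the corresponding super-level set unchanged, so no scalar normalization can enlarge $\mu(E_k)$; and since $\mu(\mathbb{T})$ is finite, a sequence of weak-type constants tending to infinity forces the levels $\lambda_k$ to grow but is perfectly compatible with $\mu(E_k)\to 0$ (for instance constants $k$ realized with $\mu(E_k)=2^{-k}$ and $\lambda_k^p=k2^k$). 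Your construction then uses only \emph{one} random translate of each $g_k$, which does not change the measure of its bad set, so the lower bound $\mu(E_k+y_k)\ge\delta$ on which everything after rests is unsupported.

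What closes this gap is precisely the amplification step that constitutes the heart of Stein's proof and is absent from your write-up: for each witness one takes $N_k\approx 1/\mu(E_k)$ \emph{independently translated, randomly signed copies of the same function}. A covering estimate shows the expected measure of the set missed by all translates of $E_k$ is $(1-\mu(E_k))^{N_k}\le e^{-1}$, so the union of the translated exceptional sets occupies a fixed fraction of $\mathbb{T}$ with probability bounded below, while Khintchine together with $\ell^2\hookrightarrow\ell^p$ (the same $p\le2$ mechanism you invoke) bounds the expected $p$-th power of the norm of the block by roughly $N_k\|g_k\|_p^p$ rather than $N_k^{p/2}$; after dividing by $N_k^{1/p}$ one obtains a new witness whose bad set has measure bounded below and whose weak-type constant is still as large as the original one. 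Only after this amplification does your claimed sequence $(g_k,M_k,\delta)$ exist, and then the remainder of your argument (the conditional sign-flip giving conditional probability $\ge 1/2$, reverse Markov, monotonicity of $X_k$, and the contradiction with a.e.\ finiteness of $T^\ast F$) goes through essentially as you wrote it, modulo standard measurability care in choosing $n(k,x)$ and in identifying $T_nF$ pointwise with the randomized series. As it stands, the proposal assumes the conclusion of the key lemma rather than proving it.
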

Thus there is, to a certain extent, an equivalence of pointwise convergence and boundedness of certain operators, at least in the linear setting.

\subsection{Pointwise Convergence in Ergodic Theory}
We begin with the following standard definition.
\begin{definition}[Ergodic Transformation]
Let $(X,\sigma,p)$ be a complete probability space and $T:X\rightarrow X$ be an invertible, bimeasurable map which preserves measure, i.e. $pT^{-1}(E)=p(E)$.  Let $\mathcal{I}$ denote the collection of sets $E$ with $T^{-1}(E)=E$.  $\mathcal{I}$ is called the invariant sigma algebra of $T$.  If $\mathcal{I}$ is the trivial sigma algebra (i.e. every element of $\mathcal{I}$ has probability $1$ or $0$) we say that $T$ is ergodic.
\end{definition}

Let $(X,\sigma,p)$ be a complete probability space and suppose that $T:X\rightarrow X$ is an invertible, bimeasurable map which preserves measure.  If $f\in L^p(X),$ the following equality holds almost everywhere:
\[
\lim_{n\rightarrow \infty}\frac{1}{n}\sum_{k=1}^n f(T^kx)=E(f|\mathcal{I}),
\]
where $E(f|\mathcal{I})$ is the conditional expectation of $f$ with respect to the invariant $\sigma$-algebra of $T$.  If $T$ is an ergodic transformation, $\mathcal{I}$ is trivial, and so the right side is actually $\int_Xf$.  In this case, the above equality is the celebrated Birkhoff Ergodic Theorem.  

Limits of ergodic averages in the spirit of Birkhoff's theorem have been studied by many authors with a host of applications in mathematics as well as the natural sciences.  One of the heralded applications of ergodic theory is Furstenberg's proof of Szemer\'{e}di's theorem:
\begin{theorem}[Szemer\'{e}di's Theorem]
Any subset of the natural numbers having positive upper density\footnote{Here, the upper density of a subset $E$ of the integers is $\lim_{n\rightarrow\infty} \left|[1,n]\cap E\right|/n$.} contains arithmetic sequences of arbitrary length.\footnote{Of course this theorem was recently extended to the set of primes by Green and Tao in \cite{taogreen}.  This required different methods since the primes do not have positive upper density --- by the Prime Number Theorem, the relevant quantity for upper density decays like $1/\log n$.}
\end{theorem}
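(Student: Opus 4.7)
The plan is to follow Furstenberg's ergodic-theoretic approach, which converts this combinatorial statement into a multiple recurrence statement for measure-preserving systems. First I would invoke the \emph{Furstenberg correspondence principle}: given $E\subset\mathbb{N}$ with upper density $\delta>0$, one constructs an invertible measure-preserving system $(X,\mathcal{B},\mu,T)$ together with a measurable set $A\subset X$ satisfying $\mu(A)=\delta$, such that for every finite collection $n_1,\ldots,n_k\in\mathbb{Z}$,
\begin{equation*}
\overline{d}\bigl(E\cap(E-n_1)\cap\cdots\cap(E-n_k)\bigr)\ \geq\ \mu\bigl(A\cap T^{-n_1}A\cap\cdots\cap T^{-n_k}A\bigr).
\end{equation*}
The system is built as a shift-invariant weak-$*$ limit of empirical measures supported on the orbit of $1_E$ inside $\{0,1\}^{\mathbb{Z}}$, taken along a subsequence realizing the upper density. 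This reduces Szemer\'{e}di's theorem to showing that for each $k\geq 1$ there exists $n\geq 1$ with $\mu(A\cap T^{-n}A\cap\cdots\cap T^{-kn}A)>0$.

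Second I would aim for the stronger \emph{Furstenberg multiple recurrence theorem}: for any invertible measure-preserving system and any $A$ of positive measure,
\begin{equation*}
\liminf_{N\to\infty}\frac{1}{N}\sum_{n=1}^N \mu\bigl(A\cap T^{-n}A\cap T^{-2n}A\cap\cdots\cap T^{-kn}A\bigr)\ >\ 0.
\end{equation*}
The ergodic decomposition reduces to the case of ergodic $T$. I would then treat two model cases as warm-ups: for a compact (Kronecker) system, $X$ is essentially a rotation on a compact abelian group, and positivity of the liminf follows from equidistribution of the orbit $\bigl(n\alpha,2n\alpha,\ldots,kn\alpha\bigr)$ in an appropriate subtorus; for a weakly mixing system, a van der Corput-type computation shows that the multiple correlations factor asymptotically and tend to $\mu(A)^{k+1}>0$.

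The hard part, which forms the bulk of the proof, is bridging these two extremes via the \emph{Furstenberg structure theorem}. This asserts that every ergodic system sits atop a (possibly transfinite) tower
\begin{equation*}
\mathcal{Z}_0\subset\mathcal{Z}_1\subset\mathcal{Z}_2\subset\cdots\subset\mathcal{Z}_\eta=X,
\end{equation*}
starting from the trivial factor, in which each extension $\mathcal{Z}_{\alpha+1}\to\mathcal{Z}_\alpha$ is either compact or weakly mixing \emph{relative} to the base, with limits taken at limit ordinals. The plan is to propagate multiple recurrence upward through the tower: for compact extensions one uses that conditional almost-periodicity turns recurrent times in the base into recurrent times for a set of positive conditional measure in the extension (a Sz-type argument via a Furstenberg--Katznelson coloring of fiber neighborhoods), while weakly mixing extensions transfer recurrence essentially verbatim through a relative van der Corput lemma. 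Transfinite induction, together with the fact that $A$ is already captured at some countable stage, completes the argument. Once multiple recurrence is established for the system provided by the correspondence principle, Szemer\'{e}di's theorem for progressions of length $k+1$ follows by applying the recurrence inequality to $A$ and picking any $n\geq 1$ in the support of the positive liminf; the correspondence principle then furnishes, for this $n$, infinitely many starting points $a\in E$ with $a,a+n,\ldots,a+kn\in E$.
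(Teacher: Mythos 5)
Your outline is the standard Furstenberg ergodic-theoretic proof (correspondence principle, reduction to multiple recurrence, structure theorem with compact and weakly mixing extensions), which is exactly the route this paper points to: the paper does not prove Szemer\'{e}di's theorem itself but states it as motivation and identifies Furstenberg's Multiple Recurrence Theorem as the main ingredient. So your proposal is consistent with, and fills in the sketch behind, the paper's cited approach.
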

The main ingredient in Furstenberg's proof is:
\begin{theorem}[Furstenberg's Multiple Recurrence Theorem]
Let $(X,\sigma,p)$ be a probability space and $T$ as in Birkhoff's theorem.  If $E$ has positive measure, then for any $k>0$ there exists an $n$ such that
\[
p(E\cap T^{-n}E\cap T^{-2n}E\cap...\cap T^{-kn}E)>0.
\]
\end{theorem}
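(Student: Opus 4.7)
My plan is to prove this by strong induction on $k$, following the classical scheme introduced by Furstenberg. The base case $k=1$ is the Poincar\'{e} recurrence theorem, which comes from a pigeonhole argument: in a probability space the sets $T^{-n}E$ all have measure $p(E)$, so some two of them must intersect in positive measure, giving $n$ with $p(E \cap T^{-n}E) > 0$. For $k \geq 2$, I would in fact aim at the stronger quantitative ``Szemer\'{e}di'' property
\[
\liminf_{N\to\infty}\frac{1}{N}\sum_{n=1}^N p(E\cap T^{-n}E\cap T^{-2n}E\cap\cdots\cap T^{-kn}E)>0,
\]
since positivity of the average forces at least one summand to be positive.

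First I would reduce to the case that $T$ is ergodic by using the ergodic decomposition: writing $p=\int p_\omega\,d\mu(\omega)$ with each $p_\omega$ a $T$-invariant, $T$-ergodic measure, the hypothesis $p(E)>0$ forces a positive $\mu$-measure set of fibers to satisfy $p_\omega(E)>0$, and a fiberwise lower bound for the displayed $\liminf$ integrates up to a lower bound for the whole system. With ergodicity in hand, the proof runs along the Furstenberg structure theorem, which represents any ergodic measure-preserving system as a (possibly transfinite) tower of extensions starting from the trivial system, each step being either a \emph{compact} (isometric) extension or a \emph{weakly mixing} extension, and inverse limits at the limit ordinals. I would then establish the SZ property along this tower by verifying four ingredients: (i) compact / Kronecker systems satisfy SZ because orbits are almost periodic and so times of simultaneous close return form a syndetic set; (ii) weakly mixing systems satisfy SZ through a van der Corput / Hilbert-space argument showing that the multilinear averages factor through the constant in the limit; (iii) SZ is preserved under weakly mixing extensions, since the extension does not alter the relevant factor-side averages; and (iv) SZ is preserved under compact extensions. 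Finally I would note that SZ survives inverse limits by a standard approximation and so the transfinite induction closes.

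The step that will do essentially all of the work, and be the main obstacle, is (iv): propagating SZ through a compact extension $\pi\colon(X,T)\to(Y,S)$ of an SZ system. The plan there is to work with the conditional Hilbert module $L^2(X)$ over $L^\infty(Y)$, decomposing a function $f=\mathbf{1}_E$ into its ``fiberwise almost periodic'' part together with a conditionally weakly mixing remainder, and then to show that for a syndetic set of integers $n$ the product $f\cdot T^n f\cdots T^{kn}f$ is close, in a suitable conditional $L^2$ norm over $Y$, to a product of translates living on the factor $Y$; the inductive hypothesis applied on $Y$ then produces the required lower bound on the average. Getting this approximation uniform, and controlling the conditional orthogonality so the error does not swallow the main term, is delicate and is where I would spend the most care. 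Once (i)--(iv) and the inverse-limit step are in place, the induction from the previous $k-1$ case yields the desired positivity of $p(E\cap T^{-n}E\cap\cdots\cap T^{-kn}E)$ for some (in fact for syndetically many) $n$.
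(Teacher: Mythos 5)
The paper never proves this theorem: Furstenberg's Multiple Recurrence Theorem appears in Section 2 purely as motivation (it is quoted as a classical result underlying the ergodic-theoretic context for the maximal operator), so there is no in-paper argument to compare yours against. What you have written is the standard Furstenberg scheme --- Poincar\'{e} recurrence as the base case, reduction to the ergodic case by ergodic decomposition, the structure theorem expressing an ergodic system as a transfinite tower of compact and weakly mixing extensions, and the SZ property propagated up the tower --- and that outline is faithful to the classical proof.

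However, as a proof it has genuine gaps, because every load-bearing step is invoked rather than established. (a) The structure theorem itself is assumed: you need to show that a system which is not relatively weakly mixing over a proper factor admits an intermediate compact extension, and you need the Zorn-type maximal-factor argument to make the transfinite induction exhaust the system. (b) Step (iv), which you correctly identify as the main obstacle, is where essentially all of the content of the theorem lives: in Furstenberg's argument it requires van der Waerden's theorem (or an equivalent color-focusing argument) applied to fiberwise almost periodic functions, plus careful bookkeeping to get a syndetic set of return times with uniform lower bounds; your sketch of ``decompose into a fiberwise almost periodic part and a conditionally weakly mixing remainder and control the error'' does not yet contain the mechanism that makes the main term survive. (c) Step (iii) is also not automatic: the assertion that a weakly mixing extension ``does not alter the relevant factor-side averages'' conceals a relative van der Corput argument showing that the multilinear averages are controlled by the conditional expectations onto the factor. (d) The inverse-limit step needs $\mathbf{1}_E$ to be approximated in $L^1$ by functions measurable with respect to some factor in the chain, together with a form of SZ stable under such approximation. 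So your proposal is a correct roadmap of the known route, but the decisive arguments remain to be supplied before it is a proof.
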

Insofar as Szemer\'{e}di's theorem is concerned one should think of $T$ as $T(x)=x+1$, so that the positivity of the above probability guarantees that $E$ contains some arithmetic sequence of length $k$.  This is not exactly correct --- the upper density is not a probability on $\mathbb{Z}$, for instance --- but Furstenberg was able to avoid this technical difficulty.  Although Furstenberg's proof avoids the issue, it would be nice if Birkhoff's theorem extended to sequences such as
\[
\frac{1}{n}\sum_{k=1}^nf_1(T^kx)f_2(T^{2k}x)...f_m(T^{mk}x)
\]
converging pointwise to something positive for any $m$ (here one should think that, for all $i$, $f_i=1_E$ for some fixed set of positive upper density).  For $m=1$, this is Birkhoff's theorem.  The case $m=2$ was established for $f_1,f_2\in L^\infty$ by Bourgain, \cite{Bourgain}, more than twenty years ago.  However even for $m=3$, the question of pointwise convergence of such averages remains open.\footnote{If one treats the related maximal trilinear operator as a singular integral operator using the methods we will discuss later on, then the related singular integral operator is ``morally'' the trilinear Hilbert transform, for which no estimates are known.}  Recent work of Austin, \cite{Austin}, establishes, among much more general types of averages, that
\[
\frac{1}{|I_N|}\sum_{n\in I_N+a_N}\prod_{i=1}^df_i(T^{in}x)
\]
converges in $L^2$-norm to some function whenever $I_N$ is some F{\o}lner sequence of subsets of integers --- this work generalizes a variety of papers by other authors, e.g. Tao, \cite{Tao}, Host and Kra, \cite{hostkra}, and Ziegler, \cite{Ziegler}.  In the work of Furstenberg and Weiss, \cite{FW}, expressions like
\[
\frac{1}{N}\sum_{n=1}^Nf_1(T^{n}x)f_2(T^{n^2}x)
\]
are also shown to converge in $L^2$.  More complicated averages involving $k$ independent parameters in the sum and $2^{k}-1$ functions, such as
\[
\frac{1}{N^3}\sum_{n,m,p=0}^Nf_1(T^nx)f_2(T^mx)f_3(T^px)f_4(T^{n+m}x)f_5(T^{n+p}x)f_6(T^{m+p}x)f_7(T^{n+m+p}x),
\]
are shown to converge almost everywhere by Assani \cite{Assani}.

This large body of work suggested a natural extension, namely whether the bi-parameter average
\begin{equation}\label{biaverage}
\frac{1}{2M+1}\frac{1}{2N+1}\sum_{m=-M}^M\sum_{n=-N}^Nf_1(T^mx)f_2(T^{-m-n}x)f_3(T^nx)
\end{equation}
converges pointwise almost everywhere, where $M$ and $N$ go to infinity at different rates.  As discussed in the previous section, questions of pointwise convergence are deeply related to boundedness of maximal operators.  Rather than work in the generality of a dynamical system, one can use a correspondence principle to translate the problem to $\mathbb{R}$.  For example, see Section 14 of \cite{DTT}.  The maximal operator one produces via such a correspondence principle is precisely
\begin{equation}\label{luthyformula}
(f_1,f_2,f_3)\mapsto\sup_{h_1,h_2}\frac{1}{h_1h_2}\int_{-h_1}^{h_1}\int_{-h_2}^{h_2}|f_1(x-t)f_2(x+s+t)f_3(x-s)|dsdt.
\end{equation}
Forcing $h_1:=h_2$, one obtains, essentially, the object of the main result in \cite{DTT} by Demeter, Tao, and Thiele.  However, the above maximal operator depends on two independent parameters, $h_1$ and $h_2$, and so we call it a \emph{bi-parameter maximal operator}.  In what follows, we discuss this operator in detail; this is, as far as the author knows, the first time such an operator has been studied.

In particular, we will show that an operator related to (\ref{luthyformula}) is bounded from $L^{p_1}\times L^{p_2}\times L^{p_3}\rightarrow L^{p_4}$, for $p_1,p_2,p_3>1$ with $1/p_1+1/p_2+1/p_3=1/p_4$, for a ``non-trivial'' range of exponents $p_i$.\footnote{Clearly, one expects a H\"{o}lder-type condition on the exponents since this operator behaves like a pointwise product for a fixed pair $h_1,h_2$.}  The term ``non-trivial'' here requires some explanation.  One could, for example, assume that $f_2\in L^\infty$ in which case (\ref{luthyformula}) splits into a tensor product  of Hardy--Littlewood operators and thus H\"{o}lder's inequality and well-known results produce ``trivial'' estimates.  However, one would ideally like all the $f_i$ to be as close to $L^1$ as possible, in which case a number of things go awry.  Indeed, in such a case, the target space $L^{p_4'}$ has $p_4'<1$ and $p_4<0$, in which case the triangle inequality no longer holds, the relationship between an operator and its adjoint is more complicated, and the 4-linear form one produces by dualizing cannot support H\"{o}lder's inequality.  Alternatively, one could put $f_3\in L^\infty$ and invoke other known results --- this produces, essentially, a maximal variant of  $B(f_1,M(f_2))$, where $M$ is the Hardy--Littlewood maximal operator and $B$ is the bilinear Hilbert transform, an operator which can be handled by the techniques of \cite{LBMO} and \cite{DTT}.  There are a number of such possible trivial estimates which are available.  One can then invoke multilinear interpolation results to produce a large family of estimates which require only known results.  In this article, we produce results outside these easily available estimates to push the range of allowable exponents even further.

\subsection{Connection to Singular Integral Operators}
Returning to the boundedness of the Hardy-Littlewood maximal operator, we recall that the proof depends on a classical Vitali covering argument; in particular it does not require any Fourier analysis.  However, the proof does not extend to the bilinear variant,
\begin{equation}\label{bimaximal}
\sup_{r>0}\frac{1}{2r}\int_{-r}^r|f(x+t)g(x+2t)|dt.
\end{equation}
This maximal operator corresponds to the pointwise convergence problem of Bourgain described above (modulo some details).  Of course, one has immediate estimates for the above expression via H\"{o}lder's inequality, but one would like, for example, to have both $f$ and $g$   close to $L^1$, which cannot be handled by H\"{o}lder.  One can, however, use techniques from singular integrals to get estimates outside the usual H\"{o}lder range.  For instance, it was known for a long time that Littlewood-Paley theory could be used to prove the boundedness of the Hardy-Littlewood operator, even though such sophistication was not necessary.  More recently, Lacey, \cite{LBMO}, estimated the above maximal operator using methods related to estimating a maximal variant of the bilinear Hilbert transform,
\[
\sup_{h>0}\int_{h<|t|<1/h}f(x+t)g(x+2t)\frac{dt}{t}.
\]
The results of work by Demeter, Tao, and Thiele, \cite{DTT}, extended this idea to one-parameter maximal $n$-linear operators by realizing that the $n$-linear problem is treatable using the techniques from the maximal \emph{bilinear} Hilbert transform.

The main results of this article center on extending the ideas of Lacey and Demeter, Tao, and Thiele to bi-parameter maximal operators.  In particular, if the Demeter-Tao-Thiele theorem, \cite{DTT}, shows a connection between maximal one-parameter multilinear operators and the maximal bilinear Hilbert transform, the main theorem we prove at present establishes a connection between bi-parameter maximal multilinear operators and a maximal variant of the so-called Biest operator (see \cite{MTTBiest1},\cite{MTTBiest2}) which is connected to AKNS systems --- these systems are a way of describing many integrable PDE.

\subsection{Biest and AKNS systems}
It has been known for some time that there is a strong connection between PDE and time-frequency analysis based on the Heisenberg principle, e.g. as discussed by C. Fefferman in \cite{Feff}; in this paragraph we describe a relevant example which inspired the development of the aforementioned Biest operator.  In \cite{CK1} and \cite{CK2}, Christ and Kiselev were interested in proving that eigenfunctions of one-dimensional Schr\"{o}dinger operators with potential $F$ in $L^p$ are bounded for almost all energies when $p<2$; in their proof, they produced a collection of multilinear operators $T_n$ and wrote eigenfunctions as a sum of multilinear operators $\sum_n T_n(F,...,F)$.  Their methods broke down when the input functions were all in $L^2$, although it was conjectured that eigenfunctions would be bounded when $p=2$.  Muscalu, Tao, and Thiele, using time-frequency analysis, showed that some of these multilinear operators were in fact unbounded when the input functions are in $L^2$ in \cite{MTTCounter}.  This indicates that the multilinear expansion approach is flawed at $p=2$, though the conjecture may still hold --- after all, $e^{ix}$ is a bounded function even though most terms in its power series are not.  One can translate the entire discussion to the framework of the aforementioned AKNS systems, to which many integrable PDEs relate.  One again produces a family of operators, the simplest of which resemble the Carleson operator and the bilinear Hilbert transform; these are important ``protoypical'' objects in time-frequency analysis.  Indeed, Muscalu, Tao, and Thiele studied a variety of operators arising in this way --- the so-called Bi-Carleson, \cite{MTTBiCarleson}, and Biest, \cite{MTTBiest1}, \cite{MTTBiest2}, operators.  Since Muscalu, Tao, and Thiele's approach to the Biest was so fruitful to the present work, we shall present a terse overview of AKNS systems and how they relate to singular integrals.

AKNS\footnote{AKNS systems are named after  M. J. Ablowitz, D. J. Kaup, A. C. Newell, and H. Segur; see, e.g., Chapter 1 in \cite{Darboux}} systems are systems of ODEs capable of describing a wide variety of PDE, such as the KdV, Nonlinear Schr\"{o}dinger, and sine-Gordon equations.  We proceed with a brief discussion of AKNS systems and the connection to singular integrals.  Suppose that $u:=(u_1(t),...,u_n(t))$ is a column vector of complex-valued functions on the line.  Let $D$ be a diagonal $n\times n$ matrix with distinct (constant) entries $d_i$ along the diagonal.  Suppose that $V$ is a matrix whose entries $V_{ij}$ are functions such that diagonal $V_{ii}\equiv 0$.  Let $\lambda$ be a real parameter.  One of the defining equations in an AKNS system is
\[
\frac{d}{dt}u=i\lambda Du+Vu.
\]
The rough (and incorrect) heuristic is that the functions $u_i$ represent the positions in the plane of planets rotating around the origin at rates $d_i$; the $i$th planet affects the motion of the $j$th planet according to the potential $V_{ij}$.  

As a particular example, consider for a fixed function $F$,
\[
\begin{pmatrix} u_1' \\ u_2' \end{pmatrix}=i\lambda \begin{pmatrix} 1 & 0\\ 0 & -1\end{pmatrix}\begin{pmatrix}u_1\\u_2\end{pmatrix}+\begin{pmatrix}0 & 1\\ F & 0\end{pmatrix}\begin{pmatrix} u_1\\u_2\end{pmatrix}.
\]
After doing some algebra, one is easily able to produce the time-independent Schr\"{o}dinger equation,
\[
-u_1''+Fu_1=\lambda^2u_1.
\]

Going back to the general case: supposing that $V$ is upper-triangular, one has, heuristically, that the mass of each planet is vastly bigger than the next\footnote{For instance with the Sun, Jupiter, and Jupiter's moon Io: each is $\approx 1000$ times heavier than the next.}.  After a simple substitution, $u_i(t)=w_i(t)e^{id_it\lambda}$, this equation becomes
\[
w'=Vw
\]
where $w=(w_1,...,w_n)$ and $V=(V_{ij}(t)e^{i\lambda(d_i-d_j)t})$.  In the simplest case, $n=2$ and $V$ an upper triangular matrix, one can solve the system exactly to see that $w_2\equiv C_\lambda$ for some constant $C_\lambda$ and
\[
w_1(t)=C_\lambda\int_{-\infty}^tV_{12}(s)e^{i\lambda(d_1-d_2)s}ds +D_\lambda,
\]
for some constant $D_\lambda$.  Forgetting the constants and assuming for simplicity $d_1-d_2=1$, we see that bounding $\|w_1\|_\infty$ is equivalent to estimating
\[
\sup_{t\in\mathbb{R}}\left|\int_{-\infty}^tV_{12}(s)e^{i\lambda s}ds\right|.
\]
This is trivially finite if $V_{12}\in L^1$.  However, by proving the above expression is $p$-integrable (with respect to $\lambda$) for some $p$, one immediately gets the expression is finite for almost every $\lambda$.  By a theorem of Menshov and Zygmund, this is true for $p\in[1,2)$.  Even further, observe that this expression looks very similar to the Carleson operator described at the beginning of this introduction, except that the integrand has $V_{12}$ rather than the Fourier transform thereof.  If one presumes that $V_{12}$ is the Fourier transform of some function in $L^q$ for $q\in(1,2]$, the boundedness of the Carleson operator, along with the Hausdorff-Young inequality, guarantee boundedness of orbits.  A similar treatment of the $n=3$ upper-triangular case produces a maximal bilinear operator, dubbed the Bi-Carleson operator, studied by Muscalu, Tao, and Thiele, \cite{MTTBiCarleson}. In \cite{MTTBiest1}, \cite{MTTBiest2} Muscalu, Tao, and Thiele studied a non-maximal operator, dubbed the Biest, related to the $n=4$ AKNS.  There are certain structural similarities in the form of the Biest operator and the bi-parameter maximal operator studied in the present work.  After transforming the Biest operator into frequency variables, its symbol has discontinuities along two hyperplanes, $\xi_1=\xi_2$ and $\xi_2=\xi_3$; one then performs a decomposition with respect to this singular set.  It will be convenient to treat our bi-parameter maximal operator in an analogous fashion.

\section{Discretization}
\subsection{Main Problem}

For measurable functions $f_1,f_2,f_3$ with appropriate conditions (to be defined later, but one may assume that these function are smooth, bounded, compactly supported, etc.), our operators, $T$ and $T^*$, are defined by
\[
T(f_1,f_2,f_3)=\frac{1}{h_1h_2}\int_{-h_1}^{h_1}\int_{-h_2}^{h_2}|f_1(x-s)||f_2(x+s+t)||f_3(x-t)|dsdt
\]
and
\[
T^*(f_1,f_2,f_3)=\sup_{h_1,h_2} \frac{1}{h_1h_2}\int_{-h_1}^{h_1}\int_{-h_2}^{h_2}|f_1(x-s)||f_2(x+s+t)||f_3(x-t)|dsdt,
\]
where the supremum is taken over all real $h_1$ and $h_2$.  We wish to show that $T^*$ satisfies H\"{o}lder-type estimates.  Some standard limiting arguments along with restricted weak-type interpolation theorems common in time-frequency analysis will allow us to restrict our attention to smooth functions $f_i$ which are supported on unions of compact intervals such that the $f_i$ have $L^\infty$-norm bounded by $1$.  We will discuss weak-type interpolation later on.  It is often heuristically useful to imagine that the $f_i$ are simply characteristic functions of a union of intervals --- the smoothness condition simply makes the Fourier analysis nicer.

A trivial argument shows that it suffices to modify our operator slightly to include only dyadic values of the $h_j$, i.e. to shift our attention to
\[
T^*(f_1,f_2,f_3)=\sup_{k_1,k_2\in\mathbb{Z}}\frac{1}{2^{k_1}2^{k_2}}\int_{-2^{k_1}}^{2^{k_1}}\int_{-2^{k_2}}^{2^{k_2}}|f_1(x-s)f_2(x+s+t)f_3(x-t)|dsdt
\]
where $k_1,k_2\in\mathbb{Z}$.

\subsection{Fourier Representation}
In the above, we would like to replace the sharp cutoff functions $\chi_{[-2^{k_1},2^{k_1}]}$ and $\chi_{[-2^{k_2},2^{k_2}]}$ with smooth functions; clearly, it would suffice to replace these sharp cutoffs by Schwartz functions $\theta(2^{-k_i}s)$, say, where $\theta$ is non-negative, $1$ at $0$ and which decays rapidly in units of length $1$ away from $[-1,1]$. It may at first glance seem better to pick $\theta$ to be compactly supported, but this results in perfect localization in space variables rather than frequency variables.  Since we should like to use Fourier analysis, it will be more convenient for the Fourier transforms of the functions to be compactly supported.  We will define our functions explicitly via the following lemma.  First, a definition:
\begin{definition}
For smooth functions $\eta_1,\eta_2$, we define $T^*_{\eta_1,\eta_2}$ as follows:
\[
\sup_{k_1, k_2}\frac{1}{2^{k_1}2^{k_2}}\int_{\mathbb{R}^2}|f_1(x-s)f_2(x+s+t)f_3(x-t)|\check{\eta_1}(s/2^{k_1})\check{\eta_2}(t/2^{k_2})dsdt.
\]
\end{definition}
\begin{lemma}
There are symmetric, non-negative, real-valued functions $\alpha$ and $\beta$ which are supported in $[-1,1]$ whose Fourier transforms are non-negative and so that $\check{\alpha}(0)=\check{\beta}(0)=1.$  Moreover,
\[
T^*(f_1,f_2,f_3)(x)\lesssim T^*_{\alpha,\beta}(f_1,f_2,f_3)(x),
\]
where the implied constant depends on the choice of $\alpha$ and $\beta$.
\end{lemma}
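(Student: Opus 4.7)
The plan is to first exhibit explicit $\alpha$ and $\beta$ satisfying all the listed positivity and support properties, and then to show that, after paying a harmless constant, the sharp indicators $\mathbf{1}_{[-2^{k_i}, 2^{k_i}]}$ are pointwise dominated by dilates of $\check\alpha$ and $\check\beta$. Since $|f_1 f_2 f_3| \geq 0$, this pointwise majorization between the kernels immediately transfers to the integral and then, upon taking the supremum, to the operator itself.

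For the construction I would take $\alpha(x)=\beta(x)=\max(1-|x|, 0)$, the tent function on $[-1,1]$. This is symmetric, non-negative, real-valued, supported in $[-1,1]$, and its inverse Fourier transform is
\[
\check\alpha(s) = \left(\tfrac{\sin(\pi s)}{\pi s}\right)^2 \geq 0,
\]
with $\check\alpha(0)=\int\alpha = 1$, so all the required properties are satisfied. More conceptually, one could instead take $\alpha = \phi*\phi$ for any non-negative, symmetric, compactly supported bump $\phi$ whose own Fourier transform is non-negative (so that $\hat\alpha = \hat\phi^{\,2}\ge 0$); the tent function is just the simplest such choice since it is the self-convolution of $\mathbf{1}_{[-1/2,1/2]}$.

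The key elementary fact I would use is that $\check\alpha$ is bounded below on a full neighborhood of the origin: explicitly, $\check\alpha(u)\geq (2/\pi)^2 > 0$ for $|u|\le 1/2$. Hence for every $k_1$ and for all $s$ with $|s|\le 2^{k_1}$, one has $\check\alpha(s/2^{k_1+1}) \gtrsim 1$, and similarly for $\beta$ at scale $k_2+1$. Therefore
\[
\frac{1}{2^{k_1}2^{k_2}}\mathbf{1}_{[-2^{k_1},2^{k_1}]}(s)\mathbf{1}_{[-2^{k_2},2^{k_2}]}(t)
\lesssim \frac{1}{2^{k_1+1}2^{k_2+1}}\check\alpha(s/2^{k_1+1})\check\beta(t/2^{k_2+1}),
\]
with an absolute implied constant (the factor $2^{2}$ from the rescaling combined with $(\pi/2)^{4}$ from the lower bounds). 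Multiplying through by $|f_1(x-s)f_2(x+s+t)f_3(x-t)|\ge 0$, integrating in $(s,t)$, and then taking the supremum of both sides over $(k_1,k_2)\in\mathbb Z^2$ (noting that the supremum defining $T^*_{\alpha,\beta}$ ranges over all integer pairs, hence in particular $(k_1+1,k_2+1)$) yields the desired pointwise inequality $T^*(f_1,f_2,f_3)(x)\lesssim T^*_{\alpha,\beta}(f_1,f_2,f_3)(x)$. There is essentially no hard step here — the only thing to verify is the positivity of $\check\alpha$ on $[-1/2,1/2]$, which is trivial for the explicit tent choice; the usefulness of the lemma is in allowing the subsequent sections to replace the sharp cutoffs by Fourier-compactly-supported smooth weights without changing the operator beyond a constant factor.
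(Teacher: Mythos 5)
Your proof is correct and follows essentially the same route as the paper: the paper also builds $\alpha,\beta$ by self-convolution (it takes $(\theta*\theta)^2$ for a symmetric real $\theta$ supported in $[-1/2,1/2]$, so that both the function and its Fourier transform are manifestly non-negative), normalizes so $\check\alpha(0)=1$, and then uses the continuity/positivity of $\check\alpha$ near the origin to dominate the sharp cutoff after a harmless dilation absorbed into the supremum. The one caveat is that your primary choice, the tent function, is not smooth and $\check\alpha(s)=(\sin\pi s/\pi s)^2$ decays only like $|s|^{-2}$; since the definition of $T^*_{\eta_1,\eta_2}$ is stated for smooth $\eta_i$ and the subsequent decomposition lemma needs $|\check\alpha(s)|\lesssim(1+|s|)^{-M_1}$ for arbitrarily large $M_1$, you should use your second construction with a \emph{smooth} non-negative symmetric bump $\phi$ (for which $\hat\phi$ is automatically real, so $\widehat{\phi*\phi}=\hat\phi^{\,2}\ge 0$ without any extra hypothesis on $\hat\phi$).
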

\begin{proof}
Let $\theta$ be a nonzero symmetric, real-valued function supported on $[-1/2,1/2]$.  Then $\theta*\theta$ is a real-valued symmetric function supported in $[-1,1]$; since $\theta$ is symmetric, $\hat{\theta}$ is necessarily real-valued so that $(\hat{\theta})^2\ge 0$.  We may then take $\alpha$ and $\beta$ to be $(\theta*\theta)^2$, which will again be symmetric, be supported in $[-1,1]$, and have non-negative Fourier transform (being the convolution of non-negative functions); it is also itself non-negative, being the square of a real-valued function.  We also observe that $\widehat{(\theta*\theta)^2}(0)=\int(\theta*\theta)^2(y)dy>0$, and so we may normalize this function to get $\check{\alpha}(0)=\check{\beta}(0)=1$.  

Since $\alpha,\beta\ge 0$ and $\check{\alpha}(0)=\check{\beta}(0)=1$, we may choose a constant $C$, which depends on our choice of $\theta$, so that $\alpha(x/C)\beta(y/C)$ is pointwise greater than $\frac{1}{2}\chi_R$ where $R$ is the rectangle $[-1,1]\times[-1,1]$, which gives the second claim.
\end{proof}

\subsection{Heuristic: Analogy to Bilinear Hilibert Transform}
The reader at this point may think the symbol of the operator we have just developed is smooth and should not be analyzed as follows --- however, we stress that carving it in a naive way will be problematic to analyze because there are two scale parameters which interact.  So we take what seems, at first glance, to be a rather obtuse approach.  Ignoring the absolute value signs, we may take the Fourier transform and inverse Fourier transform to produce the following Fourier representation of our operator:
\[
\sup_{k_1, k_2}\left|\int_{\mathbb{R}^3}\hat{f_1}(\xi_1)\hat{f_2}(\xi_2)\hat{f_3}(\xi_3)\alpha(2^{k_1}(\xi_1-\xi_2))\beta(2^{k_2}(\xi_3-\xi_2))e^{2\pi ix(\xi_1+\xi_2+\xi_3)}d\xi_1d\xi_2d\xi_3\right|,
\]
where $\alpha$ and $\beta$ are of the type given in the previous lemma.  It will be more convenient later to reverse the sign of the argument of $\alpha$, which is harmless, and so we change $\alpha(s)$ to $\alpha(-s)$.  Suppose for the moment that $\alpha$ were constant in a small neighborhood of the origin --- this is actually impossible since
\[
\Delta\alpha(0)=\int\widehat{\Delta\alpha}(\xi)e^{2\pi i\cdot 0\cdot \xi}d\xi=-(2\pi)^2\int\xi^2\hat{\alpha}(\xi)d\xi<0,
\]
by the positivity of $\hat{\alpha}$.  Ignoring this technical difficulty, we would have that $\alpha(0)-\alpha(s)$ is a function equal to $\alpha(0)$ for $|s|\ge 1$ and $0$ in a neighborhood of the origin.  The bilinear symbol $\alpha(0)-\alpha(\xi_1-\xi_2)$ restricted to $\xi_1<\xi_2$ then looks something like a constant multiple of a scale-truncated Bilinear Hilbert transform --- the Bilinear Hilbert transform's symbol is something like $\chi_{\xi_1<\xi_2}$; if one broke this function up scale by scale according to a Littlewood--Paley partition of unity (with respect to the line $\xi_1=\xi_2$), the $\alpha$ we are now encountering is analogous to a sum over all the scales above $1$.  Of course we actually have two symbols, $\alpha(\xi_1-\xi_2)$ and $\beta(\xi_3-\xi_2)$, which interact with one another.  Since the parameters $k_1$ and $k_2$ are independent scale parameters, this gives the impression that our operator corresponds to something like a doubly maximal-variant of two interacting Bilinear Hilbert transforms.  Ignoring the maximal nature of such an object, the Biest operator studied by Muscalu, Tao, and Thiele in \cite{MTTBiest1} is of a similar type.  Thus there is some hope of borrowing some of their techniques to deal with the present issues.

\subsection{Making the Analogy Precise}

As indicated above, we would prefer if, say, the function $\alpha$ produced in the previous lemma were actually constant in a neighborhood of zero.  This is not directly possible.  However, we may produce an acceptable substitute via the following technical lemma, which is a slightly modified version of \cite[Theorem 3.1]{DTT}:

\begin{lemma}\label{compactinfrequency}
Suppose that $\tilde{\alpha}$ and $\tilde{\beta}$ are both constant in $[-1,1]$ and zero outside $[-2,2]$, and
\begin{align*}
\left|\left(\tilde{\alpha}\right)^{\vee}(s)\right|\lesssim\frac{1}{\left(1+|s|\right)^{M_1}}\\
\left|\left(\tilde{\beta}\right)^{\vee}(t)\right|\lesssim\frac{1}{\left(1+|t|\right)^{M_2}}.
\end{align*}
If we can show that $T^{*}_{\tilde{\alpha},\tilde{\beta}}$ satisfies the desired estimates, depending on $M_1,M_2$ and on the implied constants in the two inequalities above but not on the particular $\alpha,\beta$, then we may pass these estimates to the operators $T^{*}_{\alpha,\beta}$ above.
\end{lemma}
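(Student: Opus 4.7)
My plan is to express $\alpha$ and $\beta$ as Fourier-side products with $\tilde\alpha$ and $\tilde\beta$, which transfers (via Fourier inversion) to convolutions in the physical domain, and thereby rewrite $T^*_{\alpha,\beta}$ as an $L^1$-weighted integral average of shifted copies of $T^*_{\tilde\alpha,\tilde\beta}$. Since $\tilde\alpha$ is constant (say equal to $c_0\ne 0$) on $[-1,1]$ which contains the support of $\alpha$, I define $g:=\alpha/c_0$ so that $\alpha=g\cdot\tilde\alpha$ pointwise on the Fourier side; likewise define $h:=\beta/d_0$. The functions $g,h$ are smooth and compactly supported, so $\check g,\check h$ are Schwartz and in particular in $L^1(\mathbb{R})$ with rapid decay. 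Products become convolutions: $\check\alpha=\check g*\check{\tilde\alpha}$ and $\check\beta=\check h*\check{\tilde\beta}$.

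I substitute these convolution identities into each single-scale piece of the operator, apply Fubini, and then perform the change of variables $s\mapsto s+v_1$, $t\mapsto t+v_2$. Because the kernel structure $f_1(x-s)f_2(x+s+t)f_3(x-t)$ is translation-equivariant (shifting $s$ by $v_1$ translates $f_1$ by $v_1$ and adds $v_1$ to the middle argument; analogously for $t$ and $v_2$), the result is the identity
\[\frac{1}{2^{k_1}2^{k_2}}\int |f_1f_2f_3|\,\check\alpha(s/2^{k_1})\,\check\beta(t/2^{k_2})\,ds\,dt=\iint G_{k_1}(v_1)\,H_{k_2}(v_2)\,T^{(k_1,k_2)}_{\tilde\alpha,\tilde\beta}(\tau_{v_1}f_1,\,\tau_{-v_1-v_2}f_2,\,\tau_{v_2}f_3)(x)\,dv_1\,dv_2,\]
where $G_k(v):=2^{-k}\check g(v/2^k)$ and $H_k(v):=2^{-k}\check h(v/2^k)$ are $L^1$-normalized dilates satisfying $\int|G_k|=\|\check g\|_1$ uniformly in $k$, $\tau_a f(y):=f(y-a)$, and $T^{(k_1,k_2)}_{\tilde\alpha,\tilde\beta}$ denotes the single-scale (no-supremum) version of $T^*_{\tilde\alpha,\tilde\beta}$.

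Taking absolute values and the supremum over $(k_1,k_2)$ gives a pointwise bound of $T^*_{\alpha,\beta}(f_1,f_2,f_3)(x)$ by an integral of $|T^{(k_1,k_2)}_{\tilde\alpha,\tilde\beta}(\tau_{\cdot}\vec f)(x)|$ weighted by $|G_{k_1}(v_1)H_{k_2}(v_2)|$. Passing to the $L^{p_4}$ (restricted weak) (quasi-)norm, invoking Minkowski's integral inequality for $p_4\ge 1$ (or its restricted weak-type analogue from Section~4 for the $p_4<1$ regime relevant to the main theorem), and exploiting the translation invariance $\|\tau_a f_i\|_{p_i}=\|f_i\|_{p_i}$ inside the $v$-integral, the hypothesized uniform estimates for $T^*_{\tilde\alpha,\tilde\beta}$ transfer to $T^*_{\alpha,\beta}$ with prefactor $\|\check g\|_1\|\check h\|_1<\infty$, which depends only on the fixed $\alpha,\beta$.

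The chief obstacle is that the translations $\tau_{v_1},\tau_{-v_1-v_2},\tau_{v_2}$ sit \emph{inside} the single-scale operator, so the supremum over $(k_1,k_2)$ is not literally the sup defining $T^*_{\tilde\alpha,\tilde\beta}$ applied to a fixed triple of inputs. This is navigated by first linearizing the supremum via measurable selectors $(k_1(x),k_2(x))$, then passing to norms where translation invariance of the $L^{p_i}$ of the $f_i$ trivializes the shifts, and finally invoking the uniform hypothesis. A secondary subtlety, that $\check{\tilde\alpha}$ and $\check{\tilde\beta}$ need not be non-negative (unlike $\check\alpha$ and $\check\beta$) so the single-scale operators may be signed, is immaterial because the hypothesis only involves the decay envelopes $|\check{\tilde\alpha}(s)|\lesssim(1+|s|)^{-M_1}$ and $|\check{\tilde\beta}(t)|\lesssim(1+|t|)^{-M_2}$, which dominate all the expressions encountered.
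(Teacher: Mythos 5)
Your factorization $\alpha=g\,\tilde\alpha$ with $g=\alpha/c_0$ (legitimate, since $\alpha$ is supported in $[-1,1]$ where $\tilde\alpha\equiv c_0\neq0$), the convolution identity $\check\alpha=\check g*\check{\tilde\alpha}$, and the exact formula expressing each single-scale piece of $T^*_{\alpha,\beta}$ as a $G_{k_1}\otimes H_{k_2}$-weighted average of single-scale pieces applied to translated inputs are all correct. The gap is in the final step. After linearizing the supremum with selectors $k_1(x),k_2(x)$, the weight $G_{k_1(x)}(v_1)=2^{-k_1(x)}\check g\bigl(v_1/2^{k_1(x)}\bigr)$ depends on $x$, so Minkowski's integral inequality (together with translation invariance of the $\|f_i\|_{p_i}$) leaves you needing
\[
\int_{\mathbb{R}}\sup_{x}\bigl|G_{k_1(x)}(v_1)\bigr|\,dv_1\;\le\;\int_{\mathbb{R}}\sup_{k\in\mathbb{Z}}2^{-k}\bigl|\check g(v_1/2^{k})\bigr|\,dv_1
\]
to be finite, and it is not: since $\check g(0)=\int g\neq0$, choosing $2^k\sim|v_1|$ gives $\sup_k 2^{-k}|\check g(v_1/2^k)|\gtrsim|v_1|^{-1}$, which is integrable neither at the origin nor at infinity. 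The underlying issue is that the shifts carrying mass at scale $k_1$ are those with $|v_1|\lesssim2^{k_1}$, and $k_1$ ranges over all of $\mathbb{Z}$, so there is no integrable majorant for the whole family of weights; the prefactor $\|\check g\|_1\|\check h\|_1$ controls each \emph{fixed} scale but does not survive the supremum. (A secondary problem: in the regime of interest the target exponent satisfies $p_4'<1$, where Minkowski's integral inequality for a continuum superposition is unavailable; the restricted weak-type machinery of Section 4 handles countable sums with geometrically decaying coefficients, which is precisely the structure a continuum average with an $L^1$ weight lacks.)

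Your route is also genuinely different from the paper's, which avoids translations altogether by working additively on the frequency side: $\alpha-\tilde\alpha$ is cut into dyadic pieces $\phi_u$ living where $|\xi|\sim2^u$, $u\le0$, each rescaled piece again satisfies the hypotheses of the lemma uniformly, and the dilation identity produces a factor $2^u$ making the resulting countable sum of operators geometrically summable --- exactly the structure the interpolation framework can absorb. The natural repair of your argument --- absorbing a shift of size $|v_1|$ into the kernel at cost $(1+|v_1|/2^{k_1})^{M_1}$ via the decay hypothesis on $\check{\tilde\alpha}$, and paying for it with the rapid decay of $\check g$ at the matching scale --- does give a finite pointwise bound, but it replaces $\check{\tilde\alpha}$ by a positive majorant whose Fourier transform no longer has the support/constancy structure the lemma's hypothesis requires, so it reduces the problem back to essentially the original operator rather than to $T^*_{\tilde\alpha,\tilde\beta}$. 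As written, the proposal does not close.
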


\begin{proof}
Let $\tilde{\alpha}$ be a smooth, symmetric function which is identically $1$ on [-1,1] and supported on $[-2,2]$.  Write 
\[
\alpha(\xi)=\tilde{\alpha}(\xi)+\sum_{u=-\infty}^{0}\phi_u(\xi)
\]
where 
\[
\phi_u(\xi)=\left(\alpha(\xi)-\tilde{\alpha}(\xi)\right)\left(\tilde{\alpha}(\xi/2^u)-\tilde{\alpha}(\xi/2^{u-1})\right).
\]
Perform a similar construction for $\beta$ using $\tilde{\beta}$ and $\varphi_v$.  Then by the triangle inequality, we have the following pointwise estimate:
\begin{equation}
T^*_{\alpha,\beta}\lesssim T^*_{\tilde{\alpha},\tilde{\beta}}+\sum_{v=-\infty}^0T^*_{\tilde{\alpha},\varphi_v}+\sum_{u=-\infty}^0T^*_{\phi_u,\tilde{\beta}}+\sum_{u,v=-\infty}^0T^*_{\phi_u,\varphi_v}.\label{varphidummy1}
\end{equation}
The first term on the right of (\ref{varphidummy1}) obviously satisfies the conditions of the lemma.  We now focus on the second term.  Observe that $\varphi_v$ is identically zero on $[-2^v,2^v]$ and also when $|\xi|\ge 2\times 2^v$; a similar statement holds for $\varphi_v$.  Now, since
\[
2^v\widehat{\varphi_v(2^v\cdot)}(\xi)=\widehat{\varphi_v}(2^{-v}\xi),
\]
it follows that 
\begin{equation*}
T^{*}_{\tilde{\alpha},\varphi_v(2^v\cdot)}=T^{*}_{\tilde{\alpha},\varphi_v},
\end{equation*}
and so
\[
T^{*}_{\tilde{\alpha},\varphi_v}=2^vT^{*}_{\tilde{\alpha},\varphi_v(2^v\cdot)/2^v}.
\]
Now, we know that $\phi_v(2^v\cdot)/2^v$ is supported inside $[-2,2]$ and is constant on $[-1,1]$.  Moreover, we have that
\begin{equation}
\left({\varphi_v(2^v\cdot)}\right)^{\vee}=2^{-v}\check{\varphi}_v\left(\frac{\xi}{2^v}\right).\label{varphidummy2}
\end{equation}
By writing $\check{\varphi}_v$ as a convolution and putting the modulus inside the integral from the convolution, it is easy to see that
\[
\frac{1}{2^v}\left|\check{\varphi}_v(s)\right|\lesssim 2^v\frac{1}{\left(1+|s|\right)^{M_1}}\|\tilde{\alpha}\|_2,
\]
where the implied constant depends on $\alpha$ but not $v$.  Plugging this into (\ref{varphidummy2}), we have that
\[
\frac{1}{2^v}\left|\left({\varphi_v(2^v\cdot)}\right)^{\vee}(\xi)\right|\lesssim\frac{1}{\left(1+|s|\right)^{M_1}}\|\tilde{\alpha}\|_2.
\]
This, together with the definition of $\tilde{\alpha}$, guarantees that $T^{*}_{\tilde{\alpha},\varphi_v(2^v\cdot)/2^v}$ satisfies all the conditions in the statement of the lemma; hence we can translate estimates on $T^{*}_{\tilde{\alpha},\varphi_v(2^v\cdot)/2^v}$ to
\[
\sum_{v=-\infty}^02^v\left(T^{*}_{\tilde{\alpha},\varphi_v(2^v\cdot)/2^v}\right)=\sum_{v=-\infty}^0T^*_{\tilde{\alpha},\varphi_v},
\] 
which takes care of the second term on the right of (\ref{varphidummy1}).  The last two terms are dealt with in a similar manner.
\end{proof}

The above lemma allows us to assume that the functions $\alpha$ and $\beta$ appearing in our operator are supported in $[-2,2]$ and constant in $[-1,1]$.  In fact, the lemma allows us to assume that they are actually either $0$ or $1$ in $[-1,1]$.  Since we may clearly write such a function which is $0$ in $[-1,1]$ as a difference of two functions which are $1$ in $[-1,1]$ and $0$ outside $[-2,2]$, we make the following assumption:

\begin{assumption}
Assume without loss of generality that $\alpha,\beta\equiv 1$ in $[-1,1]$.
\end{assumption}

\subsection{Discretization.}
We recall our object of study:
\[
\sup_{k_1, k_2}\left|\int_{\mathbb{R}^3}\hat{f_1}(\xi_1)\hat{f_2}(\xi_2)\hat{f_3}(\xi_3)\alpha(2^{k_1}(\xi_2-\xi_1))\beta(2^{k_2}(\xi_3-\xi_2))e^{2\pi ix(\xi_1+\xi_2+\xi_3)}d\xi_1d\xi_2d\xi_3\right|,
\]
where $\alpha,\beta$ satisfy the conditions in Lemma (\ref{compactinfrequency}).  We take a different approach to that taken in the maximal multilinear paper by Demeter, Tao, and Thiele, \cite{DTT}.  We first use the triangle inequality to consider separately the integrals over each of the four regions of $\mathbb{R}^3$ determined by the two planes $\xi_1=\xi_2$ and $\xi_2=\xi_3$ --- all four regions are treated identically, so we consider only $\xi_1<\xi_2<\xi_3$.

\subsection{Decomposition Heuristic}
For a fixed $k_1,k_2$, our symbol is $\alpha(2^{k_1}(\xi_2-\xi_1))\beta(2^{k_2}(\xi_3-\xi_2))\chi_{\xi_1<\xi_2<\xi_3}$.  We will make the following imprecise (and incorrect) observations to get a feeling of what kind of model we should expect.  First, in the usual way, one can write $\alpha(2^{k_2}(\xi_2-\xi_1))$ as a cascading sum of functions $\theta_i(\xi_2-\xi_1)$ which are supported on bands where $\xi_2-\xi_1\approx 2^{-i}$, and likewise for $\beta$ and functions $\theta_j'$ supported on the bands $\xi_3-\xi_2\approx 2^{-j}$.  We now split the operator into three pieces, namely where $i\gg j$, $i\approx j$, and $j\gg i$, respectively.  The piece where $i\approx j$ has only one true scale parameter, and thus the techniques of \cite{DTT} are, roughly speaking, sufficient.  By symmetry, it suffices to consider only $i\gg j$.

For each scale $i$, one can, heuristically speaking, write $\theta_i(\xi_2-\xi_1)=\sum_{\ell_1}\phi_i^{\ell_1}(\xi_1)\phi_i^{\ell_1+2}(\xi_2)$, where $\phi_i^s$ is a function supported in an interval $\omega_{i,\ell_1}:=[2^{-i}\ell_1,2^{-i}(\ell_1+1)]$ and are something like the characteristic function of $\omega_{i,\ell_1}$.  This is technically an oversimplification (one truly requires a finite number of expressions involving $\phi_i^{\ell_1}(\xi_1)\phi_i^{\ell_1+n}$, for instance), but we are merely making a heuristic approach anyway, so we ignore these details for the moment.  See Figure \ref{fig:model_heuristic_1} below.
\begin{figure}[!htb]
\centering
\includegraphics[scale=1]{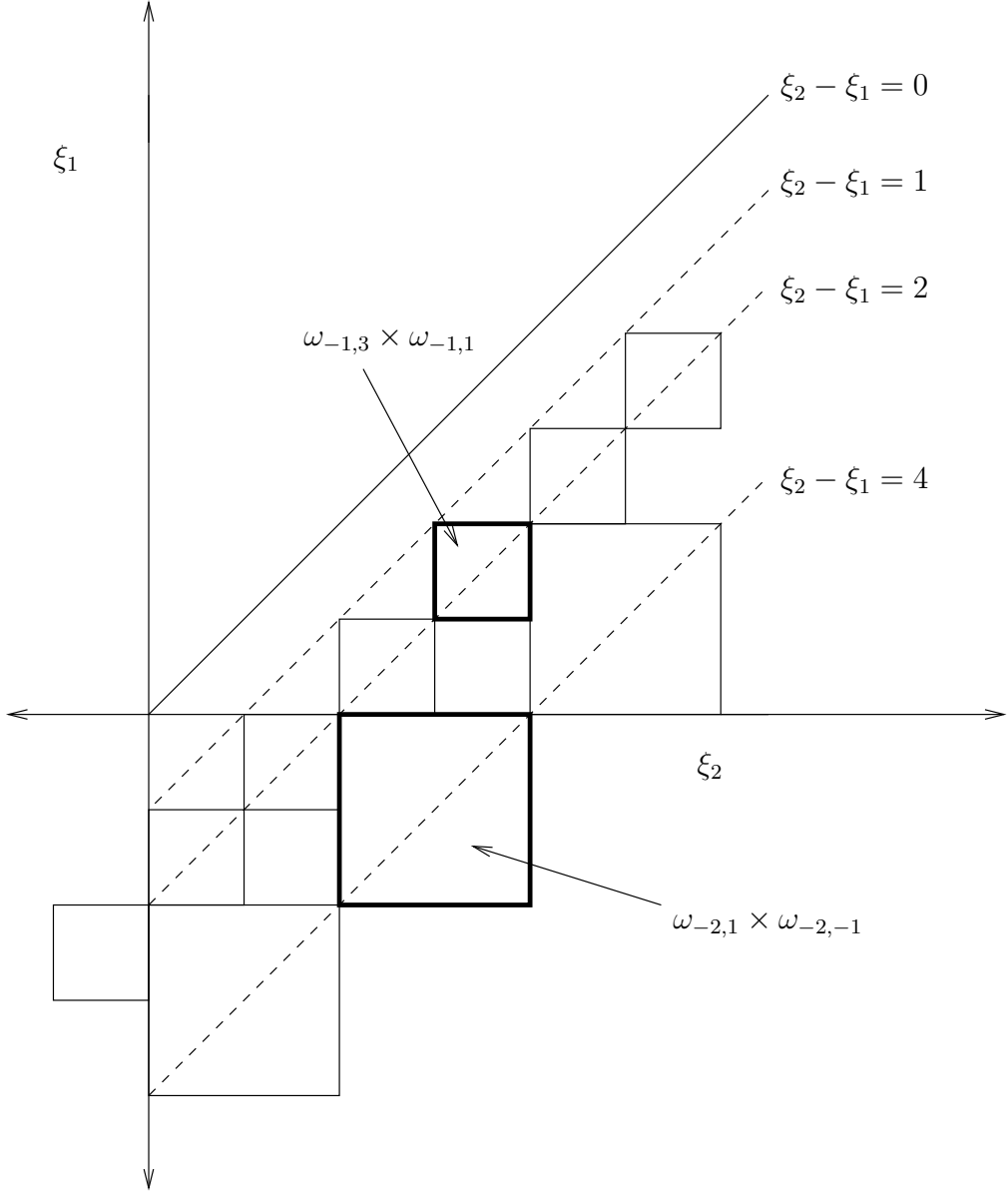}
\caption{A rough visual of how to carve the symbol for $k_2\gg k_1$.}
\label{fig:model_heuristic_1}
\end{figure}

In a similar way, produce functions $\phi_j^{\ell_2}$ for $\theta_j'$.  Then one can break our symbol up as
\begin{align*}
\alpha(2^{k_1}(\xi_1-\xi_2))\beta(2^{k_2}(\xi_3-\xi_2))&=\sum_{i\gg j,i\ge k_1,j\ge k_2}\phi_i^{\ell_1}(\xi_1)\phi_i^{\ell_1+2}(\xi_2)\phi_j^{\ell_2}(\xi_2)\phi_j^{\ell_2+2}(\xi_3)\\
&+\sum_{i\approx j,i\ge k_1,j\ge k_2}\phi_i^{\ell_1}(\xi_1)\phi_i^{\ell_1+2}(\xi_2)\phi_j^{\ell_2}(\xi_2)\phi_j^{\ell_2+2}(\xi_3)\\
&+\sum_{i\ll j,i\ge k_1,j\ge k_2}\phi_i^{\ell_1}(\xi_1)\phi_i^{\ell_1+2}(\xi_2)\phi_j^{\ell_2}(\xi_2)\phi_j^{\ell_2+2}(\xi_3).
\end{align*}
Recall that we only consider the $i\gg j$ region which corresponds to the first term in the above sum.  Now, we have that the supports of $\phi_i^{\ell_1+2}(\xi_2)$ and $\phi_j^{\ell_2}(\xi_2)$ must intersect to produce nonzero terms in this sum, and therefore dyadicity of these intervals and the fact that $i\gg j$ guarantees that $\omega_{i,\ell_1+2}\subset \omega_{j,\ell_2}$.  We now make another technical oversimplification and presume the following completely false equality: $\phi_i^{\ell_1+2}(\xi_2)\phi_j^{\ell_2}(\xi_2)=\phi_i^{\ell_1+2}(\xi_2)$.  This ``makes sense'' since these functions are to be thought of as characteristic functions and, in any case, the $j$-function is roughly constant on the interval for the $i$ function by the separation of scales.  Then our operator looks like
\[
\sum_{i\ge k_1,i\gg j\ge k_2,\omega_{i,\ell_1+2}\subset \omega_{j,\ell_2}}\int_U\widehat{f_1}(\xi_1)\phi_i^{\ell_1}(\xi_1)\widehat{f_2}(\xi_2)\phi_i^{\ell_1+2}(\xi_2)\widehat{f_3}(\xi_3)\phi_j^{\ell_2+2}(\xi_3)e^{2\pi ix(\xi_1+\xi_2+\xi_3)}d\xi,
\]
where the integral is over the region 
\[
U:=\{(\xi_1,\xi_2,\xi_3):\xi_1<\xi_2<\xi_3\}.
\]
We may now ``re-insert'' the supremums into our operator and linearize the problem by considering two arbitrary (but fixed) integer-valued functions $N_1(x)$ and $N_2(x)$ to obtain
\begin{align*}
\sum_{i\gg j,\omega_{i,\ell_1+2}\subset \omega_{j,\ell_2}}\int_U \widehat{f_1}(\xi_1)\phi_i^{\ell_1}(\xi_1)&\widehat{f_2}(\xi_2)\phi_i^{\ell_1+2}(\xi_2)\widehat{f_3}(\xi_3)\phi_j^{\ell_2+2}(\xi_3)\\
& \times e^{2\pi ix(\xi_1+\xi_2+\xi_3)}d\xi1_{i\ge N_1(x)}1_{j\ge N_2(x)}.
\end{align*}
The only caveat is that the estimates must of course be independent of $N_1$ and $N_2$.  If one now dualizes with a function $f_4$ and discretizes in the usual way, i.e. as in \cite{camilbook}, grouping like scales together, one obtains a model of the form
\begin{equation}\label{simplemodel}
\sum_{i\gg j,m_1\in\mathbb{Z}} |\omega_{i,\ell_1+2}|\langle f_1,\check{\phi}_i^{\ell_1,m_1}\rangle \langle f_2,\check{\phi}_i^{\ell_1+2,m_1}\rangle \left\langle M_{i,\ell_1}(f_3)\check{\phi}_i^{2\ell_1+2,m_1}\chi_{\{|\omega_{i,\ell_1+1}|^{-1}\ge 2^{N_1(x)}\}},f_4\right\rangle,
\end{equation}
where
\[
M_{i,\ell_1}(f_3):=\sum_{m_2\in\mathbb{Z},\omega_{i,\ell_1+2}\subset \omega_{j,\ell_2}}\langle f_3,\check{\phi}_j^{\ell_2,m_2}\rangle\check{\phi}_j^{\ell_2,m_2}\chi_{\{|\omega_{j,\ell_2}|^{-1}\ge 2^{N_2(x)}\}}.
\]
Here of course the functions $\check{\phi}_s^{m,n}$ are $L^2$-normalized functions whose Fourier transforms are supported on intervals of length $2^{-s}$ translated $\ell_2\cdot 2^{-s}$ units; moreover, the function itself is ``morally'' localized to an interval of length $2^s$ and translated by $m_2\cdot 2^s$ units. 

Ignoring the factor of $M_{i,\ell_1}(f_3)$ --- i.e. erasing it completely --- one encounters exactly a model of the type found in \cite{DTT}, and their techniques apply directly.  The factor $M_{i,\ell_1}$, for a fixed $m_2$, is something like a localized maximal Hilbert transform which depends on the pair $i,\ell_1$.  One expects, for $m_2$ very different from the corresponding $m_1$, quite a bit of decay so that really only the $m_2\approx m_1$ terms contribute significantly.

The main novelty of the model here is that it has a genuinely bi-parameter structure along with two characteristic functions controlling the scales independently.  Thus the techniques of \cite{DTT} do not apply, and one must obtain new size and energy estimates, which is no small task.

Under the assumption that $i\gg j$ we may invoke the triangle inequality yet again to focus on two separate cases for the supremum: the supremum over $k_1,k_2$ when $k_1>k_2$ and when $k_2\ge k_1$.  In the latter case since we have $i\gg j$, we know that $i\gg j>k_2\ge k_1$, i.e. when $j>k_2$, we automatically have $i>k_1$; thus the supremum can be relaxed to simply a supremum over only $k_2$ in this case.  In the following section, we build the model under the assumption that $k_2\ge k_1$.  The other case is more delicate and will be written up separately.

\subsection{Making the above heuristic precise: a Taylor series approach for $i\gg j$ and $k_2\ge k_1$}

Since $\alpha,\beta$ are constant in [-1,1] and supported in [-2,2], we see that 
\begin{align*}
\theta(s)&=\alpha(s)-\alpha(2s),\\
\phi(t)&=\beta(t)-\beta(2t),\\
\end{align*}
are zero in $[-1,1]$ and outside of $[-2,2]$.  We now write
\begin{align*}
\theta_i(s)&=\theta(2^is),\\
\phi_j(t)&=\phi(2^jt).
\end{align*}
Thus we may write
\begin{align*}
\alpha(2^{k_1}s)=\sum_{i\ge k_1}\theta_i(s),\\
\beta(2^{k_2}t)=\sum_{j\ge k_2}\phi_j(t).
\end{align*}
Hence for any given $f_1,f_2,f_3$, we may write our maximal operator as
\[
\sup_{k_1,k_2}\left|\sum_{i\ge k_1,j\ge k_2}\int_{\mathbb{R}^3}\hat{f_1}(\xi_1)\hat{f_2}(\xi_2)\hat{f_3}(\xi_3)\theta_i(\xi_2-\xi_1)\phi_j(\xi_3-\xi_2)e^{2\pi ix(\xi_1+\xi_2+\xi_3)}d\vec{\xi}\right|,
\]
where $\theta_i(\xi_2-\xi_1)$ and $\phi_j(\xi_3-\xi_2)$ are supported in the bands $|\xi_1-\xi_2|\approx 2^{-i}$ and $|\xi_3-\xi_2|\approx 2^{-j}$, respectively.  As stated previously, we split the interior sum into $i\gg j$, $i\approx j$ and $j\gg i$ and the supremum into the supremum over $k_2\ge k_1$ and $k_2<k_1$.  More precisely, one may consider the sums where $j>i+10$, $i>j+10$ and $|i-j|\le 10$.  Under either assumption that $k_2\ge k_1$ or $k_2<k_1$, the restriction to scales where $i\approx j$ is really a finite sum of single-parameter maximal operators nearly identical to those from the work of Demeter, Tao, and Thiele --- these operators, after a trivial modification, can all be treated using identical techniques to that of \cite{DTT}.  Thus one only needs to consider the four remaining options, which really consist of two pairs of analogous conditions.  Thus it suffices to consider only $i\gg j$ under either the condition $k_2\ge k_1$ or $k_1>k_2$.
\begin{assumption}
For the remainder of our discussion, we consider only the case $i\gg j$, i.e.
\[
\sup_{k_1,k_2}\left|\sum_{i\gg j,i\ge k_1, j\ge k_2}\int_{\mathbb{R}^3}\hat{f_1}(\xi_1)\hat{f_2}(\xi_2)\hat{f_3}(\xi_3)\theta_i(\xi_2-\xi_1)\phi_j(\xi_3-\xi_2)e^{2\pi ix(\xi_1+\xi_2+\xi_3)}d\vec{\xi}\right|,
\]
where $i\gg j$ means $i>j+10$.
\end{assumption}

Moreover, as stated in the title of this section, we will focus only on the case when $k_2\ge k_1$:

\begin{assumption}
For the remainder of our discussion, we discuss only the case $k_2\ge k_1$ and $i\gg j$, i.e.
\[
\sup_{k_2}\left|\sum_{i\gg j\ge k_2}\int_{\mathbb{R}^3}\hat{f_1}(\xi_1)\hat{f_2}(\xi_2)\hat{f_3}(\xi_3)\theta_i(\xi_2-\xi_1)\phi_j(\xi_3-\xi_2)e^{2\pi ix(\xi_1+\xi_2+\xi_3)}d\vec{\xi}\right|,
\]
where $i\gg j$ means that $i> j+10$.
\end{assumption}

It will again be convenient to consider the integral only over the set $U\subset \mathbb{R}^3$ where $\xi_1<\xi_2<\xi_3$ (the other three analogous regions are treated in the same way, modulo a transposition of indices).  In the subset of $U$ where $i\gg j$, we see that any product $\theta_i\phi_j$ is only nonzero in the region $\xi_3-\xi_2\gg\xi_2-\xi_1$ since $\xi_3-\xi_2\approx 2^{-j}\gg 2^{-i}\approx\xi_2-\xi_1$.  One of the basic observations from the Biest paper, \cite{MTTBiest2}, is that in this region, $\chi_{\xi_1<\xi_2<\xi_3}=\chi_{\xi_1<\xi_2}\cdot\chi_{\xi_1+\xi_2<2\xi_3}$.  This latter form is somewhat more convenient: when one discretizes each factor on the right side of this equation, one gets something like $\psi_i^1(\xi_1)\psi_i^2(\xi_2)\psi_j^1(\xi_1+\xi_2)\psi_j^2(\xi_3)$.  This is nicer in the sense that the inverse Fourier transform of this is then
\[
\left((\check\psi_i^1\check\psi_i^2)*\check\psi_j^1\right)\cdot\check\psi_j^2,
\]
which is something like a composition of two bilinear Hilbert transforms, where the ``inner'' BHT is localized to the (larger) frequency interval of the ``outer'' BHT.

In the Biest paper, \cite{MTTBiest2}, Muscalu, Tao, and Thiele are able to subtract from the symbol $\chi_{\xi_1<\xi_2<\xi_3}$ a smooth function which equals $\chi_{\xi_1<\xi_2}\cdot\chi_{\xi_1+\xi_2<2\xi_3}$ in the range $|\xi_3-\xi_2|\gg |\xi_2-\xi_1|$ (as well as a second function performing a similar role where $2\xi_1<\xi_2+\xi_3$ and $\xi_2<\xi_3$) to produce something which is a smooth ``standard symbol'' in that it has only a ``nice'' singularity along the line $\xi_1=\xi_2=\xi_3$ (rather than the two planes $\xi_1=\xi_2$ and $\xi_2=\xi_3$.  We would like to perform a similar dissection of our operator, but our symbol is complicated by the fact that we have something like the symbol for $\chi_{\xi_1<\xi_2<\xi_3}$ which is smoothly \emph{truncated}.  When making a similar approach of subtracting ``nice'' symbols, the fact that this symbol is not identically equal to 1 or 0 has the effect of creating ``boundary'' terms which are quite complicated, requiring different methods which are apparently as difficult as the ones we presently encounter.  We thus veer from the Biest approach somewhat in favor of the following methodology.  We will still encounter error terms, but they will have a more reasonable shape.

By Taylor's theorem, for a smooth function $f$,
\[
f(x)=f(a)+(x-a)f'(a)+...+\frac{(x-a)^n}{n!}f^{(n)}(a)+f_{n}(x-a),
\]
where $f_{n}$ is the remainder from Taylor's theorem.  Thus we may write
\[
\phi_j(\xi_3-\xi_2)=\sum_{m=0}^n\frac{\left(\xi_1-\xi_2\right)^m}{2^mm!}\phi_j^{(m)}\left(\xi_3-\frac{\xi_1+\xi_2}{2}\right)+\psi_{j,n}\left(\xi_3-\frac{\xi_1+\xi_2}{2}\right),
\]
where $\psi_{j,n}$ is the remainder term from Taylor's theorem.  In particular, by the definition of $\phi_j$, it follows that
\[
\phi_j^{(m)}\left(\xi_3-\frac{\xi_1+\xi_2}{2}\right)=2^{jm}\phi^*_{m,j}\left(\xi_3-\frac{\xi_1+\xi_2}{2}\right),
\]
where $\phi^*_{m,j}$ is also a smooth, bounded function supported on the same interval as $\phi_j$.  Moreover, $(\xi_1-\xi_2)^m\approx 2^{-im}$ on the support of $\theta_i$, and so $\theta^*_{i,m}(\xi_2-\xi_1)=2^{im}\theta_i(\xi_2-\xi_1)(\xi_1-\xi_2)^m$ is a smooth, bounded function supported on the same interval as $\theta_i$.  Thus for a fixed pair $i,j$, the $m$-th order term in the Taylor expansion gains a factor of $2^{-m(i-j)}$, which is small when $i-j$ is big --- this holds since we are in the situation that $i\gg j$.  We denote by $\tau_{m,k}(\xi_1,\xi_2,\xi_3)$ the symbol which corresponds to the sum of all products $\theta^*_{i,m}\phi^*_{j,m}$ such that $i-j=k\gg 0$ and $j\ge k_2$.  Since we are assuming that $i>j+10$, we have that $k>10$.  So, the operator whose symbol is the sum of all the $m$-th order terms is given by $\sum_{k>10}2^{-mk}\tau_{m,k}$.  It is not hard to observe that for a finite family of multi-indices $\alpha$, we may pick $m$ large so that
\[
|\partial^\alpha\tau_{m,k}(\xi)|\lesssim 2^{k(m-|\alpha|)}\frac{1}{|\xi|^\alpha},
\]
for all $\alpha$ in this family.  By doing similar computations for the remainder $\psi_{j,n}$ (and using the remainder theorem for Taylor series), one gets a similar result for the symbol $\tau_{n,k}$ (coming from $\psi_{j,n}$).  Thus for sufficiently large $n$, the $\tau_{n,k}$ satisfy the usual condition for the multilinear Coifman--Meyer multiplier theorem (a recent proof may be found in \cite{biparam}).  We cannot apply the theorem directly, however, since we additionally have a supremum over $k_2$ still waiting for us.  However, this is not a major issue.  We will briefly discuss  why this is in the following paragraph.

As one can see using the techniques we will use shortly for the $m=0$ term, the discrete model for $\tau_{n,k}$ will be something like
\[
\sum_P\langle B_{P,k}(f_1,f_2),\phi_P^1\rangle\langle f_3,\phi_P^2\rangle \langle f_4,\phi_P^3 1_{|I_P|>2^{N_2(x)}}\rangle,
\]
where
\[
B_{P,k}(f_1,f_2)=\sum_{Q:\omega_{Q_3}\subset \omega_{P_1},\frac{|I_P|}{|I_Q|}=2^k}\langle f_1,\phi_Q^1\rangle\langle f_2,\phi_Q^2\rangle\phi_Q^3.
\]
Each interval $\omega_{P_1}$ has length $2^{-j}$ and each $\omega_{Q_3}$ has length $2^{-i}$.  Thus there are precisely $2^k$ intervals $\omega_{Q_3}$ that will contribute to the sum.  One can then consider a sum of $2^k$ models, where the $\omega_{Q_3}$ lie in a fixed position within the $\omega_{P_1}$ intervals; if one can estimate each one of these terms separately (in a uniform way), one can estimate the whole model for $\tau_{n,k}$, losing a factor of $2^k$ in the estimates.  As we will discuss, there are sizes and energies available for the $\langle f_3,\phi_P^2\rangle$ term (which is standard) as well as the $\langle f_4,\phi_P^3 1_{|I_P|>2^{N_2(x)}}\rangle$ term (which follows from the methods in \cite{DTT}).  The remaining term, $\langle B_{P,k}(f_1,f_2),\phi_P^1\rangle$ requires a bit more work to estimate fully.  However, one can perform some manipulations, provided $m$ is sufficiently large, using some ideas from \cite{camilflag} and \cite{MTTBiest2}.

The loss of $2^k$ is more problematic when $m=1$ (since we lose a factor of $2^k$ but only gain a factor of $2^{-k}$), but for larger $m$ one will be able to sum over $k$ to get that the full remainder operator, $\sum_{k>10}2^{-mk}\tau_{n,k}$, is indeed bounded.  Thus it truly suffices to consider the ``main term'', when $m=0$, as well as a few small, positive values for $m$.

The Taylor series terms for positive $m$, are, in theory, nicer objects since their symbols have increased in smoothness.  Nevertheless, there are some technical issues, and estimating them seems, at present, to require more robust technology than is currently available; thus they will need to be written elsewhere.  Recent work by J. Jung, \cite{Jung:2013fj}, seems like a fruitful source of inspiration in this direction.  In any case, we shall focus only on the $m=0$ case in the remainder of our discussion.

\begin{assumption}
For the remainder of our discussion, we focus on the operator given by the $m=0$ term in the Taylor expansion described above, i.e. our operator is
\[
\sup_{k_2}\left|\sum_{i\gg j\ge k_2}\int_{U}\hat{f_1}(\xi_1)\hat{f_2}(\xi_2)\hat{f_3}(\xi_3)\theta_i(\xi_2-\xi_1)\phi_j\left(\xi_3-\frac{\xi_1+\xi_2}{2}\right)e^{2\pi ix(\xi_1+\xi_2+\xi_3)}d\vec{\xi}\right|,
\]
where $i\gg j$ means $i-j>10$ and $U$ is the subspace of $\mathbb{R}^3$ where $\xi_1<\xi_2<\xi_3$.
\end{assumption}

If we dualize with a function $f_4$, we observe that this last line may be majorized by
\begin{align*}
\left|\sum_{i\gg j}\int\right.&\left.\int_{U}\hat{f_1}(\xi_1)\hat{f_2}(\xi_2)\hat{f_3}(\xi_3)\theta_i(\xi_2-\xi_1)\right.\\ &\left.\phi_j\left(\xi_3-\frac{\xi_1+\xi_2}{2}\right)e^{2\pi ix(\xi_1+\xi_2+\xi_3)}f(x)1_{j\ge N_2(x)}d\xi dx\right|,
\end{align*}
for some integer-valued function $N_2(x)$.  Thus it suffices to establish estimates for the above which are independent of $N_2(x)$, which we now fix.

\begin{assumption}
It suffices to estimate
\begin{align*}
\left|\sum_{i\gg j}\int\right.&\left.\int_{U}\hat{f_1}(\xi_1)\hat{f_2}(\xi_2)\hat{f_3}(\xi_3)\theta_i(\xi_2-\xi_1)\right.\\ &\left.\phi_j\left(\xi_3-\frac{\xi_1+\xi_2}{2}\right)e^{2\pi ix(\xi_1+\xi_2+\xi_3)}f(x)1_{j\ge N_2(x)}d\xi dx\right|,
\end{align*}
independent of $N_2(x)$, which is an integer-valued function.
\end{assumption}

To continue further, we will need to make several standard definitions; we group them together in the following section.

\subsection{Notation and Definitions}
We make the following definitions, which are due to Muscalu, Tao, and Thiele; these statements are copied more or less verbatim from \cite[Definitions 4.1--4.6]{MTTBiest2}.
\begin{definition}
Let $n\ge 1$ and $\sigma\in\{0,1/3,2/3\}^n$.  We define the shifted $n$-dyadic mesh $D=D_\sigma^n$ to be the collection of cubes of the form
\[
D_\sigma^n:=\{2^j(k+(0,1)^n+(-1)^j\sigma):j\in\mathbb{Z}\textrm{ and }k\in\mathbb{Z}^n\}.
\]
We define a shifted dyadic cube to be any member of a shifted n-dyadic mesh.
\end{definition}
In the context of our discussion, we will primarily deal with the $n=3$ case.  One can make the standard observation that for any cube $Q$ there exists a shifted dyadic cube $Q'$ such that $Q\subseteq \frac{7}{10}Q'$ and $|Q'|\sim |Q|$.
\begin{definition}
A subset $D'$ of a shifted $n$-dyadic grid $D$ is called sparse if, for any two cubes $Q,Q'$ in $D$ with $Q\neq Q'$, we have $|Q|<|Q'|$ implies $|10^9Q|<|Q'|$ and $|Q|=|Q'|$ implies $10^9Q\cap 10^9Q'=\emptyset$.
\end{definition}
A standard observation is that any subset of a shifted $n$-dyadic grid can be split into $O(1)$ sparse subsets.
\begin{definition}
Let $\sigma=(\sigma_1,\sigma_2,\sigma_3)\in\{0,1/3,2/3\}^3$, and let $1\le i\le 3$.  An $i$-tile with shift $\sigma_i$ is a rectangle $I_P\times\omega_P$ with area 1 and with $I_P\in D_0^1$ and $\omega_P\in D_{\sigma_i}^1$.  A tri-tile with shift $\sigma$ is then a 3-tuple $\vec{P}=(P_1,P_2,P_3)$ such that each $P_i$ is an $i$-tile with shift $\sigma_i$ and the $I_{P_i}=I_{\vec{P}}$ are independent of $i$.  The frequency cube $Q_{\vec{P}}$ is defined to be $\prod_{i=1}^3\omega_{P_i}$.
\end{definition}
We shall sometimes abuse notation and refer to $i$-tiles with shift $\sigma$ as simply $i$-tiles or just tiles if it is unimportant or clear from context what the parameters $\sigma$ and $i$ are.
\begin{definition}
A set $\vec{\mathbf{P}}$ of tri-tiles is called sparse if all tri-tiles in $\vec{P}$ have the same shift and the set $\{Q_{\vec{P}}:\vec{P}\in\vec{\mathbf{P}}\}$ is sparse.
\end{definition}
Clearly by the previous observation, any set of tri-tiles can be split into $O(1)$ sparse subsets.
\begin{definition}
Let $P$ and $P'$ be tiles.  We write $P'<P$ if $I_{P'}\subsetneq I_P$ and $3\omega_P\subseteq 3\omega_{P'}$, and $P'\le P$ if $P'<P$ or $P'=P$.  We write $P'\lesssim P$ if $I_{P'}\subseteq I_P$ and $10^7\omega_P\subseteq 10^7\omega_{P'}$.  We write $P'\lesssim'P$ if $P'\lesssim P$ and $P'\not\le P$.
\end{definition}
The ordering $<$ is in the spirit of that in Fefferman, \cite{Feff2}, or Lacey and Thiele, \cite{LTBHT2}, \cite{LTCC}, \cite{Thiele}, but slightly different as $P'$ and $P$ do not quite have to intersect.  This is more convenient for technical purposes.
\begin{definition}\label{rank1def}
A collection $\vec{\mathbf{P}}$ of tri-tiles is said to have rank 1 if one has the following properties for all $\vec{P},\vec{P}'\in\vec{\mathbf{P}}$:
\begin{enumerate}
\item If $\vec{P}\neq\vec{P}'$, then $P_j\neq P_j'$ for all $j=1,2,3$.
\item If $P'_j\le P_j$ for some $j=1,2,3$, then $P_i'\lesssim P_i$ for all $1\le i\le 3$.
\item If in addition to $P_j'\le P_j$ for some $j$ we assume that $|I_{\vec{P}'}|<10^9|I_{\vec{P}}|$, then we have $P_i'\lesssim' P_i$ for all $i\neq j$.
\end{enumerate}
\end{definition}
\begin{definition}
Let $P$ be a tile.  A wave packet adapted to $P$ is a function $\phi_P$ which has Fourier support in $\frac{9}{10}\omega_P$ and obeys the estimates
\[
|\phi_P(x)|\lesssim |I_P|^{-1/2}\tilde{\chi}_{I_P}(x)^M
\]
for all $M>0$, where the implicit constant of course depends on $M$ and where
\[
\tilde{\chi}_I(x):=\left(1+\left(\frac{|x-x_I|}{|I|}\right)^2\right)^{-1/2},
\]
where $x_I$ is the center of the interval $I$.
\end{definition}

\subsection{Building the model for $m=0$ when $i\gg j$ and $k_2\ge k_1$}

To reiterate, we are now considering
\begin{align*}
\left|\sum_{i\gg j}\int\right.&\left.\int_{U}\hat{f_1}(\xi_1)\hat{f_2}(\xi_2)\hat{f_3}(\xi_3)\theta_i(\xi_2-\xi_1)\right.\\ &\left.\phi_j\left(\xi_3-\frac{\xi_1+\xi_2}{2}\right)e^{2\pi ix(\xi_1+\xi_2+\xi_3)}f(x)1_{j\ge N_2(x)}d\xi dx\right|,
\end{align*}
where we $U$ is the subspace of $\mathbb{R}^3$ where $\xi_1<\xi_2<\xi_3$.

We now proceed through some standard computations.  First, we note that $\theta_i(\xi_2-\xi_1)$ is supported on the set where $\xi_2-\xi_1\in [2^{-i},2^{-i+1}]$ (recall that we are only considering $\xi_1<\xi_2<\xi_3$, and so we ignore the fact that $\theta_i$ is actually also nonzero on $[-2^{-i+1},-2^{-i}]$).  We cover this region with a family of shifted dyadic squares, $\textbf{Q}_\sigma$, where each $Q\in\mathbf{Q}_\sigma$ satisfies $d(Q,\{\xi_1=\xi_2\})\approx 2^{-i}$, so that the side length of $Q$, which we denote $|Q|$, is also approximately $2^{-i-10}$.  Now produce a family of functions $\psi_{Q,1}(\xi_1),\psi_{Q,2}(\xi_2)$ so that $\psi_{Q,t}$ is supported on $\frac{8}{10}Q_t$ and $\check\psi_{Q,t}$ are each adapted to a dyadic interval $I_Q$ (with $|I_Q|=1/|Q|$) and have $\|\check\psi_{Q,t}\|_1\lesssim 1$.  For example, one can construct a function $\gamma$ which is non-negative and supported on $[0.2,0.8]$ which decays arbitrarily rapidly away from the origin (since it is necessarily a Schwartz function) and such that
\[
\sum_\ell \left|\gamma\left(\xi-\frac{\ell}{3}\right)\right|^2=1.
\]
This is possible because the intervals $[0.2,0.8]$ translated by multiples of $1/3$ cover the line with enough room for smooth cutoffs.  The translation by $\ell/3$ adds a complex exponential to the inverse Fourier transform, which does not affect adaptedness.  Thus these functions will suffice.  Since we are thinking of these functions as being related to the frequency intervals corresponding to the sides of $Q$, we will denote these by $\omega_{Q_1}$ and $\omega_{Q_2}$, respectively.  By these observations, we can choose the $\psi_{Q,t}$ in such a way that 
\[
a(\xi_1,\xi_2):=\sum_{\sigma\in\{0,1/3,2/3\}^2}\sum_{Q\in\mathbf{Q}_{\sigma,i}}\psi_{Q,1}(\xi_1)\psi_{Q,2}(\xi_2)
\]
satisfies
\[
a(\xi_1,\xi_2)\equiv 1\textrm{, when }\xi_2-\xi_1\in [2^{-i},2^{-i+1}].
\]
Then
\[
\theta_i(\xi_2-\xi_1)=\sum_{\sigma\in\{0,1/3,2/3\}}\sum_{Q\in\mathbf{Q}_{\sigma,i}}\theta_i(\xi_2-\xi_1)\psi_{Q,1}(\xi_1)\psi_{Q,2}(\xi_2)
\]
Let $|Q|:=2^{-i}$.  Also, let $\tilde{\psi}_{Q,t}(\xi_t)$ denote a function whose inverse Fourier transform is $L^1$-normalized and adapted to the same interval $I_Q$ as $\psi_{Q,t}(\xi_t)$ which is $1$ on $\frac{8}{10}\omega_{Q_t}$ and $0$ outside of $\frac{8.5}{10}\omega_{Q_t}$.  Identifying $Q$ with $\mathbb{T}^2$ in the obvious way, we compute a Fourier series to see that
\[
\theta_i(\xi_2-\xi_1)\psi_{Q,1}(\xi_1)\psi_{Q,2}(\xi_2)=\sum_{n_1,n_2}C^Q_1(n_1,n_2)e^{2\pi i{\frac{n_1}{|Q|}}\xi_1}e^{2\pi i\frac{n_2}{|Q|}\xi_2},
\]
on the support of $\tilde{\psi}_{Q,1}(\xi_1)\tilde{\psi}_{Q,2}$.
Hence
\[
\theta_i(\xi_2-\xi_1)=\sum_{n_1,n_2}\sum_{\sigma\in\{0,1/3,2/3\}}\sum_{Q\in\mathbf{Q}_{\sigma,i}}C_1^Q(n_1,n_2)\tilde{\psi}_{Q,1}(\xi_1)\tilde{\psi}_{Q,2}(\xi_2).
\]
\begin{lemma}
$C_1^Q(n_1,n_2)$ depends only on the $\sigma$ in the definition of $\mathbf{Q}_{\sigma,i}$ rather than individual $i$; moreover, it decays arbitrarily rapidly in $n_1,n_2$.  In particular,
\[
|C_1^Q(n_1,n_2)|\lesssim \frac{1}{(1+|n|)^{M+10}},
\]
where $M$ is the decay rate in the definition of a function being adapted to an interval.  Lastly, it can be assumed that $C_1^Q(n_1,n_2)$ does not depend on $Q$, modulo a harmless, finite adjustment of $\mathbf{Q}_{\sigma,i}$ and corresponding finite loss in the estimates.  Thus we replace it with $C_1(n_1,n_2)$.
\end{lemma}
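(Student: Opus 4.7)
The plan is to exploit the fact that after rescaling by $2^i$, the expression $\theta_i(\xi_2-\xi_1)\psi_{Q,1}(\xi_1)\psi_{Q,2}(\xi_2)$ becomes a smooth compactly supported function on a unit cube whose $C^N$ norms are bounded uniformly in $i$ and $Q$ (modulo a phase and a finite integer parameter); the claim then reduces to the standard ``smoothness implies decay'' principle for Fourier coefficients on the torus.

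Concretely, I would first change variables $\eta_t := 2^i\xi_t = \xi_t/|Q|$ in the Fourier integral
\[
C_1^Q(n_1,n_2) = \frac{1}{|Q|^2}\int_Q \theta_i(\xi_2-\xi_1)\psi_{Q,1}(\xi_1)\psi_{Q,2}(\xi_2)e^{-2\pi i(n_1\xi_1+n_2\xi_2)/|Q|}\,d\xi_1d\xi_2.
\]
Since $\theta_i(s) = \theta(2^is)$, this turns $\theta_i(\xi_2-\xi_1)$ into $\theta(\eta_2-\eta_1)$, independent of $i$. The explicit construction of the partition of unity from translates of $\gamma$ means that after rescaling, $\psi_{Q,t}(|Q|\eta_t) = \gamma(\eta_t - a_{Q,t})$ for a discrete translation parameter $a_{Q,t}$ depending only on $\sigma_t$ and on the position of $Q$ within its dyadic mesh. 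Translating $\eta_t$ to center the bump factors out a pure phase $e^{-2\pi i n_t a_{Q,t}}$ from the coefficient and leaves an integrand depending only on $\sigma$ and on the bounded difference $a_{Q,2}-a_{Q,1}$ (which is $O(1)$ because $Q$ lies in the band $\xi_2-\xi_1\in[2^{-i},2^{-i+1}]$). This establishes scale-invariance in $i$ up to a phase. The decay estimate is then the standard consequence of smoothness: the remaining integrand is smooth, compactly supported, and has $C^N$ norm bounded uniformly in all parameters, so integration by parts $N$ times in each of $\eta_1,\eta_2$ yields $|C_1^Q(n_1,n_2)| \lesssim_N (1+|n_1|+|n_2|)^{-N}$ for every $N$; choosing $N=M+10$ gives the claimed estimate.

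For the final assertion, I would absorb the residual $Q$-dependent phase $e^{-2\pi i n_t a_{Q,t}}$ into $\tilde\psi_{Q,t}$ itself. Multiplication by a unimodular constant preserves both the Fourier support and the pointwise decay of the inverse Fourier transform, so it preserves adaptedness. The remaining $O(1)$ integer parameter $a_{Q,2}-a_{Q,1}$ is handled by partitioning $\mathbf{Q}_{\sigma,i}$ into finitely many sub-families on which this parameter is constant; this accounts for the ``harmless, finite adjustment'' in the statement and the corresponding finite loss in the estimates. I expect the only delicate point to be verifying that the partition into sub-families and the phase absorption into the wave packets do not disturb the sparsity and rank-1 conditions that must be imposed on the tri-tile collection later; but since both of those structural properties depend only on the underlying shifted dyadic cubes and on the moduli of the wave packets, this should amount to bookkeeping rather than a genuine analytic estimate.
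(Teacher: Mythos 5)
Your proposal is correct and follows essentially the same route as the paper: rescale by $|Q|$ to reduce to a scale-one integrand with $\theta$ independent of $i$, observe that the residual dependence on $Q$ is through finitely many translation parameters (handled by a finite partition of $\mathbf{Q}_{\sigma,i}$), and obtain the decay by repeated integration by parts. Your explicit tracking of the unimodular phase $e^{-2\pi i n_t a_{Q,t}}$ and its absorption into the wave packets is in fact slightly more careful than the paper's remark that translation parallel to $\xi_1=\xi_2$ ``does not affect the integral,'' but it is the same argument.
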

\begin{proof}
We see
\[
C^Q_{n_1,n_2}=\frac{1}{|Q|^2}\int_{\omega_{Q_1}\times\omega_{Q_2}}Q \theta_i(\xi_2-\xi_1)\psi_{Q,1}(\xi_1)\psi_{Q,2}(\xi_2)e^{-\frac{2\pi i}{|Q|}(n_1\xi_1+n_2\xi_2)}d\xi_1d\xi_2.
\]
Apply the change of variable $(\xi_1,\xi_2)\mapsto(|Q|\xi_1,|Q|\xi_2)$, one has
\[
C^Q_{n_1,n_2}=\int_{I_1\times I_2} \theta(\xi_2-\xi_1)\psi_{I_1,1}(\xi_1)\psi_{I_2,2}(\xi_2)e^{-2\pi i(n_1\xi_1+n_2\xi_2)}d\xi_1d\xi_2,
\] 
where $\theta$ lives at scale $1$, and the functions $\psi_{I_t,t}(\xi_t)$ live on intervals $I_1$ and $I_2$ of scale $1$.  Moreover, $\theta$ is independent of $Q$.  The integral then depends on the difference between the relevant $\sigma_i$'s involved as well as the distance between centers of the intervals $I_1$ and $I_2$ --- once one fixes this difference, the integral is always over some rectangle like a fixed $I_1\times I_2$ except translated parallel to $\xi_1=\xi_2$, which does not affect the integral.  But there are only a finite number of possible distances between the centers (by considering the supports relative to $\theta$, and, modulo a finite loss in the estimates, we may assume the distance is fixed).  Repeated applications of integration by parts give the second claim.
\end{proof}

We also write
\[
\phi_{Q,t}(\xi_t):=\frac{1}{1+|n_t|^M}\tilde{\psi}_{Q,t}(\xi_t)e^{2\pi i\frac{n_t}{|Q|}\xi_t},
\]
and observe the following:
\begin{lemma}
$\phi_{Q,t}(\xi_t)$ is a wave packet adapted to $I_Q\times \omega_{Q_t}$ and has $\|\check\phi_{Q,t}\|\lesssim 1$.
\end{lemma}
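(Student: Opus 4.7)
The proof is essentially a direct computation exploiting the fact that modulation in the frequency variable corresponds to translation in the spatial variable. My plan is first to take the inverse Fourier transform of the defining formula for $\phi_{Q,t}$. Since $\left(\psi(\xi) e^{2\pi i a \xi}\right)^{\vee}(x) = \check{\psi}(x+a)$, one obtains
\[
\check{\phi}_{Q,t}(x) \;=\; \frac{1}{1+|n_t|^M}\,\check{\tilde{\psi}}_{Q,t}\!\left(x + \tfrac{n_t}{|Q|}\right).
\]
The Fourier support condition for a wave packet is then immediate: $\phi_{Q,t}$ is supported where $\tilde{\psi}_{Q,t}$ is supported, namely in $\tfrac{8.5}{10}\omega_{Q_t} \subset \tfrac{9}{10}\omega_{Q_t}$, because the modulation factor is unimodular and the scalar prefactor does not affect support.

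Next I would verify the pointwise decay. Recall $\check{\tilde{\psi}}_{Q,t}$ is $L^1$-normalized and adapted to $I_Q$, so $|\check{\tilde{\psi}}_{Q,t}(y)| \lesssim |I_Q|^{-1} \tilde{\chi}_{I_Q}(y)^{M'}$ for any $M'$ of our choosing. Note that $|I_Q| = 1/|Q|$, so the translation is by $n_t/|Q| = n_t |I_Q|$, i.e.\ by exactly $n_t$ lengths of $I_Q$. The key elementary inequality is
\[
\tilde{\chi}_{I_Q}\!\left(x + \tfrac{n_t}{|Q|}\right)^{M'} \;\lesssim_{M'}\; (1+|n_t|)^{M'}\, \tilde{\chi}_{I_Q}(x)^{M'},
\]
which follows from $1 + u^2 \le C(1+|b|)^2 (1+(u+b)^2)$ applied with $u = (x-x_{I_Q})/|I_Q|$ and $b = n_t$. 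Choosing $M'$ sufficiently large relative to the target decay $M$ and combining with the prefactor $1/(1+|n_t|^M)$ absorbs the polynomial blow-up from the translation and yields the required pointwise bound for a wave packet adapted to $I_Q \times \omega_{Q_t}$ (the $+10$ buffer built into the decay of $C_1(n_1,n_2)$ in the previous lemma is precisely what provides the flexibility needed to pick $M'$ larger than the nominal wave-packet decay rate $M$).

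Finally, the norm bound is the easiest step: since translation preserves $L^1$,
\[
\|\check{\phi}_{Q,t}\|_1 \;=\; \frac{1}{1+|n_t|^M}\, \|\check{\tilde{\psi}}_{Q,t}\|_1 \;\lesssim\; \frac{1}{1+|n_t|^M} \;\le\; 1,
\]
using the $L^1$-normalization of $\check{\tilde{\psi}}_{Q,t}$. I do not anticipate a serious obstacle here; the only subtle point is bookkeeping the powers of $(1+|n_t|)$ so that the decay rate needed to meet the wave-packet definition survives after absorbing the shift, and this is handled by the $(1+|n|)^{M+10}$ bound on the Fourier coefficients established in the preceding lemma.
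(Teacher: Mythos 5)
Your argument is correct and is exactly the routine verification the paper leaves implicit (the lemma is stated as an observation with no written proof): modulation becomes translation on the physical side, the shift by $n_t|I_Q|$ costs at most $(1+|n_t|)^{M'}$ in the decay estimate, and this is absorbed by the prefactor $\frac{1}{1+|n_t|^M}$, with the residual polynomial growth in $n$ ultimately controlled by the $(1+|n|)^{-(M+10)}$ decay of the Fourier coefficients. The only bookkeeping point worth flagging is that your bound gives the $L^1$-normalized decay $|I_Q|^{-1}\tilde{\chi}_{I_Q}^{M}$ rather than the $|I_Q|^{-1/2}$ normalization appearing in the paper's wave-packet definition; this is consistent with the paper's later passage to the $L^2$-normalized packets $\phi_{Q,t,\beta}(x)=|I_Q|^{1/2}\overline{\check{\phi}_{Q,t}(x-\beta)}$, so it is a harmless discrepancy in the lemma's phrasing, not in your proof.
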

Thus we finally write
\[
\theta_i(\xi_2-\xi_1)=\sum_{n_1,n_2}\sum_{\sigma}C_1(n_1,n_2)(1+|n_1|^M)(1+|n_2|^M)\sum_{Q\in\mathbf{Q}_{\sigma,i}}\phi_{Q,1}(\xi_1)\phi_{Q,2}(\xi_2).
\]
It is also clear that for a fixed $\xi_1,\xi_2$, only finitely many terms in the sum will be nonzero.  Performing a similar decomposition to the function $\phi_j(a-b)$ and replacing $a=\xi_3$ and $b=\frac{\xi_1+\xi_2}{2}$, one can write
\[
\phi_j\left(\xi_3-\frac{\xi_1+\xi_2}{2}\right)=\sum_{n_3,n_4}\sum_{\sigma'\in\{0,1/3,2/3\}}\tilde{C}_2(n_3,n_4)\sum_{P\in\mathbf{P}_{\sigma',j}}\phi_{P,1}(\xi_3)\phi_{P,2}\left(\frac{\xi_1+\xi_2}{2}\right),
\]
where the $\tilde{C}_2$ has incorporated the polynomial in $n_3,n_4$ which is present in the previous equation.  Hence
\begin{lemma}
Our 4-linear form
\begin{align*}
\left|\sum_{i\gg j}\int\right.&\left.\int_{\mathbb{R}^3}\hat{f_1}(\xi_1)\hat{f_2}(\xi_2)\hat{f_3}(\xi_3)\theta_i(\xi_2-\xi_1)\right.\\ &\left.\phi_j\left(\xi_3-\frac{\xi_1+\xi_2}{2}\right)e^{2\pi ix(\xi_1+\xi_2+\xi_3)}f(x)1_{j\ge N_2(x)}d\xi dx\right|,
\end{align*}
can be written as
\begin{align*}
&\sum_{n\in\mathbb{Z}^4}\sum_{\sigma,\sigma'}C(n)\left|\sum_{i\gg j}\sum_{Q\in\mathbf{Q}_{\sigma,i},P\in\mathbf{P}_{\sigma',j}}\int\int_{\mathbf{R}^3}\hat{f_1}(\xi_1)\hat{f_2}(\xi_2)\hat{f_3}(\xi_3)\right.\\
&\left.\phi_{Q,1}(\xi_1)\phi_{Q,2}(\xi_2)\phi_{P,1}(\xi_3)\phi_{P,2}\left(\xi_1+\xi_2\right)e^{2\pi ix(\xi_1+\xi_2+\xi_3)}f(x)1_{|I_P|\ge 2^{N_2(x)}}d\xi dx\right|,
\end{align*}
and it suffices to consider this or a fixed $n$ and $\sigma,\sigma'$, i.e.
\begin{align*}
&\left|\sum_{i\gg j}\sum_{Q\in\mathbf{Q}_{\sigma,i},P\in\mathbf{P}_{\sigma',j}}\int\int_{\mathbf{R}^3}\hat{f_1}(\xi_1)\hat{f_2}(\xi_2)\hat{f_3}(\xi_3)\right.\\
&\left.\phi_{Q,1}(\xi_1)\phi_{Q,2}(\xi_2)\phi_{P,1}(\xi_3)\phi_{P,2}\left(\frac{\xi_1+\xi_2}{2}\right)e^{2\pi ix(\xi_1+\xi_2+\xi_3)}f(x)1_{|I_P|\ge 2^{N_2(x)}}d\xi dx\right|.
\end{align*}
\end{lemma}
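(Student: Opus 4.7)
The plan is to substitute the two discretizations established in the preceding paragraphs directly into the 4-linear form and then reorganize the sums. From the previous discussion I have
\[
\theta_i(\xi_2-\xi_1)=\sum_{n_1,n_2}\sum_{\sigma}C_1(n_1,n_2)(1+|n_1|^M)(1+|n_2|^M)\sum_{Q\in\mathbf{Q}_{\sigma,i}}\phi_{Q,1}(\xi_1)\phi_{Q,2}(\xi_2),
\]
together with the completely analogous identity for $\phi_j\!\left(\xi_3-(\xi_1+\xi_2)/2\right)$, whose coefficients I denote $\tilde{C}_2(n_3,n_4)$ and whose wave packets I denote $\phi_{P,1},\phi_{P,2}$. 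Substituting both identities into the 4-linear form and pulling the sums over $n=(n_1,n_2,n_3,n_4)$ and over the finite shift set $(\sigma,\sigma')$ outside the $(\xi,x)$-integral produces the intermediate display of the lemma, with $C(n):=C_1(n_1,n_2)\tilde{C}_2(n_3,n_4)$ and with the polynomial prefactors $\prod_t(1+|n_t|^M)$ absorbed into the wave packets exactly as arranged in the preceding lemma defining $\phi_{Q,t}$. Absolute convergence of the triple-indexed sum is not an issue: for any fixed $(\xi_1,\xi_2,\xi_3)$ only $O(1)$ terms in each inner $Q$- and $P$-sum are nonzero, and the outer $n$-sum converges by the rapid decay of $C_1$ and $\tilde{C}_2$.

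For the reduction to a single $(n,\sigma,\sigma')$, I would apply the triangle inequality in the outer sums to majorize the expression by
\[
\sum_{n\in\mathbb{Z}^4}\sum_{\sigma,\sigma'}|C(n)|\,\bigl|\Lambda_{n,\sigma,\sigma'}(f_1,f_2,f_3,f)\bigr|,
\]
where $\Lambda_{n,\sigma,\sigma'}$ is the inner form. The preceding lemma supplies $|C_1(n_1,n_2)|\lesssim(1+|n_1|+|n_2|)^{-M-10}$ and the same for $\tilde{C}_2$, so $C(n)$ is summable over $\mathbb{Z}^4$; since the index set for $(\sigma,\sigma')$ is finite, any estimate for $\Lambda_{n,\sigma,\sigma'}$ that is uniform in $(n,\sigma,\sigma')$ immediately transfers to the full sum with an $O(1)$ loss depending only on $M$. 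The modulations $e^{2\pi i n_t \xi_t/|Q|}$ introduced by the Fourier-series expansion do not affect adaptedness of the underlying wave packets to their tiles, which is precisely what allows the polynomial prefactors to be absorbed without worsening any of the adaptedness estimates uniformly in $n$.

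The translation of the original cutoff $1_{j\ge N_2(x)}$ into $1_{|I_P|\ge 2^{N_2(x)}}$ is immediate from the construction: every $P\in\mathbf{P}_{\sigma',j}$ has $|I_P|=2^j$, so the two indicator conditions coincide pointwise. There is no genuine obstacle in this lemma; the only point requiring care is the bookkeeping that the polynomial factors generated by the Fourier-series discretization can indeed be safely absorbed into the wave packets while the residual coefficients $C(n)$ retain enough decay to be summable, and this has already been arranged in the definition of $\phi_{Q,t}$ and $\phi_{P,t}$ and in the decay estimate for $C_1$. The statement thus follows directly by assembling these ingredients.
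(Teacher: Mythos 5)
Your proposal is correct and follows essentially the same route the paper intends: the lemma is stated as a direct assembly of the two Fourier-series discretizations established just before it, and you substitute them, apply the triangle inequality, and use the rapid decay of $C_1,\tilde{C}_2$ together with the finiteness of the shift set and the identity $|I_P|=2^j$ to reduce to a fixed $(n,\sigma,\sigma')$. The only cosmetic discrepancy is bookkeeping: in the paper the polynomial prefactors $(1+|n_t|^M)$ remain in the coefficient $C(n)$ (compensating the $1/(1+|n_t|^M)$ built into the definition of $\phi_{Q,t}$) rather than being absorbed into the wave packets, but the net decay and summability are exactly as you describe.
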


Now, since the inverse Fourier transform of
\[
\phi_{Q,1}(\xi_1)\phi_{Q,2}(\xi_2)\phi_{P,2}\left(\frac{\xi_1+\xi_2}{2}\right)
\]
is
\[
\left(\check\phi_{Q,1}\check\phi_{Q,2}\right)*\check\phi_{P,2},
\]
it follows that we may insert an $L^1$-normalized function $\phi_{Q,3}(\xi_1+\xi_2)$ which is $1$ on the shifted dyadic interval $\frac{8}{10}\omega_{Q_3}:=\frac{8}{10}(\omega_{Q_1}+\omega_{Q_2})$ and $0$ outside $\frac{9}{10}\omega_{Q_3}$.  Since $|\omega_{P_2}|\gg |\omega_{Q_3}|$, we must have that $\omega_{Q_3}\subset \omega_{P_2}+\omega_{P_2}:=\omega_{\tilde{P_2}}$ for the product $\phi_{P,2}\phi_{Q,3}$ to be nonzero.

Carrying the inverse Fourier transform through, we produce
\begin{align*}
&\left|\sum_{i\gg j}\sum_{P\in\mathbf{P}_{\sigma',j}}\sum_{Q\in\mathbf{Q}_{\sigma,i}:\omega_{Q_3}\subset \omega_{P_2}}\int(f_3*\check\phi_{P,1})(x)\right.\\ 
&\left.\left((f_1*\check\phi_{Q,1})(f_2*\check\phi_{Q,2})\right)*\check\phi_{Q,3}*\check\phi_{P,2}(x)f_4(x)1_{|I_P|\ge 2^{N_2(x)}} dx\right|.
\end{align*}
One may also insert a factor $\phi_{P,3}$ which is $1$ on $\frac{8}{10}\omega_{P_3}:=\frac{8}{10}(\omega_{P_1}+\omega_{P_2})$ and $0$ outside $\frac{9}{10}\omega_{P_3}$, to produce
\begin{align*}
&\left|\sum_{i\gg j}\sum_{P\in\mathbf{P}_{\sigma',j}}\sum_{Q\in\mathbf{Q}_{\sigma,i}:{\omega_{Q_3}\subset \omega_{P_2}}}\int(f_3*\check\phi_{P,1})(x)\right.\\ 
&\left.\left((f_1*\check\phi_{Q,1})(f_2*\check\phi_{Q,2})\right)*\check\phi_{Q,3}*\check\phi_{P,2}(x)\left(f_41_{|I_P|\ge 2^{N_2(x)}}\right)*\phi_{P,3}(x) dx\right|.
\end{align*}
Now, perform a standard discretization procedure with respect to $P$, as in \cite[p. 1654--1656]{eyvi}, to produce
\begin{align*}
&\left|\int_0^1\sum_{i\gg j}\sum_{P\in\mathbf{P}_{\sigma',j}}\sum_{Q\in\mathbf{Q}_{\sigma,i}:{\omega_{Q_3}\subset \omega_{P_2}}}\sum_{I_P:|I_P|=|P|^{-1}}\frac{1}{|I_P|^{1/2}}\langle f_3, \phi_{P,1,\alpha}\rangle\right.\\ 
&\left.\left\langle (f_1*\check\phi_{Q,1})(f_2*\check\phi_{Q,2}))*\check\phi_{Q,3}, \phi_{P,2,\alpha}\right\rangle\left\langle f_41_{|I_P|\ge 2^{N_2(x)}},\phi_{P,3,\alpha}\right\rangle d\alpha\right|,
\end{align*}
and perform a second discretization with respect to $Q$:
\begin{align*}
&\left|\int_0^1\int_0^1\sum_{i\gg j}\sum_{P\in\mathbf{P}_{\sigma',j}}\sum_{Q\in\mathbf{Q}_{\sigma,i}:{\omega_{Q_3}\subset \omega_{P_2}}}\sum_{I_P:|I_P|=|P|^{-1}}\frac{1}{|I_P|^{1/2}}\langle f_3, \phi_{P,1,\alpha}\rangle\right.\\ 
&\left.\left\langle \sum_{I_Q:|I_Q|=|Q|^{-1}}\frac{1}{|I_Q|^{1/2}}\langle f_1, \phi_{Q,1,\beta}\rangle\langle f_2, \phi_{Q,2,\beta}\rangle\phi_{Q,3,\beta}, \phi_{P,2,\alpha}\right\rangle\right.\\
&\left.\left\langle f_41_{|I_P|\ge 2^{N_2(x)}},\phi_{P,3,\alpha}\right\rangle d\alpha d\beta\right|.
\end{align*}
Here,
\[
\phi_{P,t,\alpha}(x)=|I_P|^{1/2}\overline{\check\phi_{P,t}(x-\alpha)}
\]
and
\[
\phi_{Q,t,\beta}(x)=|I_Q|^{1/2}\overline{\check\phi_{Q,t}(x-\beta)}
\]
are both $L^2$-normalized bump functions adapted to the tile $I_P\times P_t$ and $I_Q\times Q_t$, respectively, \emph{uniformly} in $\alpha$ and $\beta$.  If we let 
\[
\mathbf{P}:=\{I_P\times P:I_P\textrm{ dyadic},|I_P|=2^{j}, P\in\bigcup_{j}\mathbf{P}_{\sigma',j}\textrm{ for some }j\in\mathbb{Z}\}
\]
and
\[
\mathbf{Q}:=\{I_Q\times Q:I_Q\textrm{ dyadic},|I_Q|=2^{i},Q\in\bigcup_{i}\mathbf{Q}_{\sigma,i}\textrm{ for some }j\in\mathbb{Z}\}
\]
then it suffices to study
\[
\left|\sum_{P\in\mathbf{P}}\frac{1}{|I_P|^{1/2}}\langle f_3, \phi_{P,1,\alpha}\rangle\left\langle B_P(f_1,f_2), \phi_{P,2,\alpha}\right\rangle\left\langle f_41_{|I_P|\ge 2^{N_2(x)}},\phi_{P,3,\alpha}\right\rangle\right|,
\]
where
\[
B_P(f_1,f_2):=\sum_{Q\in\mathbf{Q}:\omega_{Q_3}\subset \omega_{P_2}}\frac{1}{|I_Q|^{1/2}}\langle f_1, \phi_{Q,1,\beta}\rangle\langle f_2, \phi_{Q,2,\beta}\rangle\phi_{Q,3,\beta}.
\]

\begin{definition}
Let $\vec{\mathbf{P}}$ denote the collection of tri-tiles $\vec{P}$ corresponding to the above construction, and likewise for $\vec{\mathbf{Q}}$.
\end{definition}

\begin{proposition}
Modulo a harmless refinement, the families $\vec{\mathbf{P}}$ and $\vec{\mathbf{Q}}$ are sparse and have rank 1 (see Definition \ref{rank1def}).  We may also assume that $\sigma_1=\sigma_2=\sigma_1'=\sigma_2'=0$.
\end{proposition}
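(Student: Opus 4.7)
The plan is to address the three assertions separately, each of which reduces to a standard refinement argument in the time-frequency framework.

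First I would dispose of the shift assumption. The construction of $\vec{\mathbf{P}}$ and $\vec{\mathbf{Q}}$ in the preceding subsection involved an overall sum over the finitely many shifts $\sigma,\sigma'\in\{0,1/3,2/3\}^2$ used to cover the bands $\xi_2-\xi_1\approx 2^{-i}$ and $\xi_3-(\xi_1+\xi_2)/2\approx 2^{-j}$ by shifted dyadic squares. Since this is an $O(1)$ sum, the triangle inequality lets us fix a single pair of values; choosing $(0,0)$ in each case yields $\sigma_1=\sigma_2=\sigma_1'=\sigma_2'=0$. The remaining shifts $\sigma_3,\sigma_3'$ in the third coordinate are forced by the relation $\omega_{P_3}=\omega_{P_1}+\omega_{P_2}$ (and likewise for $Q$) and need not vanish, which is consistent with the tri-tile definition.

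Second, for sparsity, I would invoke the standard observation recorded immediately after the definition of a sparse subset: any subset of a shifted $n$-dyadic grid partitions into $O(1)$ sparse pieces. Applied to the frequency cubes $\{Q_{\vec P}\}$, viewed inside a shifted $3$-dyadic grid, this partitions $\vec{\mathbf{P}}$, and similarly $\vec{\mathbf{Q}}$, into boundedly many sparse subcollections. A further application of the triangle inequality reduces the 4-linear form to a single sparse piece; retaining only those $\vec P$ whose frequency cubes lie in a fixed sparse set is precisely the \emph{harmless refinement} the statement refers to.

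Third, for rank 1, I would verify the three clauses of Definition \ref{rank1def} using the structural fact that for every $\vec P$ the intervals $\omega_{P_1},\omega_{P_2},\omega_{P_3}$ have comparable length and satisfy $\omega_{P_3}=\omega_{P_1}+\omega_{P_2}$, and likewise for $\vec Q$. Clause (1) is immediate: sparsity plus the area-one normalization forces distinct tri-tiles to have frequency cubes separated by a factor $10^9$, so two distinct tri-tiles cannot share an $i$-tile in any coordinate. For clauses (2) and (3), suppose $P_j'\le P_j$; then $I_{P'}\subsetneq I_P$ together with the area-one condition forces $|\omega_{P'_i}|\ge|\omega_{P_i}|$ for all $i$, and the additive identity $\omega_{P_3}=\omega_{P_1}+\omega_{P_2}$ transfers the frequency inclusion from coordinate $j$ to all other coordinates. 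The $10^9$-scale gap in the sparsity definition then boosts these inclusions to the $10^7$-inclusions required by $P_i'\lesssim P_i$, giving clause (2); the scale separation $|I_{\vec{P}'}|<10^9|I_{\vec P}|$ hypothesized in clause (3) provides exactly the margin needed to exclude the remaining case $P_i'\le P_i$ for $i\neq j$, yielding the strict refinement $P_i'\lesssim' P_i$.

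The main obstacle I anticipate is the bookkeeping in clause (3): propagating the strictness $P_i'\lesssim' P_i$ (as opposed to merely $\lesssim$) through the additive constraint $\omega_{P_3}=\omega_{P_1}+\omega_{P_2}$ while tracking the $10^9$-separation of scales is a little delicate, but it is essentially a mechanical replay of the tri-tile bookkeeping carried out in \cite[Section 4]{MTTBiest2}, and no new ideas are needed.
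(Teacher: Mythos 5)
Your overall route is the same as the paper's: lose an $O(1)$ factor by the triangle inequality to fix the shifts and to pass to sparse subfamilies, then check the three clauses of Definition \ref{rank1def} by hand using the rigid structure of the tri-tiles. Clauses (1) and (2) are fine as you argue them (for (1) the paper instead observes that the constraint $\xi_2-\xi_1\in[2^{-i},2^{-i+1}]$ forces $\omega_{P_2}$ to be the translate of $\omega_{P_1}$ by a fixed nonzero multiple of $|\omega_{P_1}|$, so that $P_1$ determines $P_2$ and hence $P_3=P_1+P_2$; your sparsity-based argument reaches the same conclusion, provided you also note that two distinct tri-tiles with \emph{equal} frequency cubes have disjoint spatial intervals and so share no tile).

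The one genuine slip is in your justification of clause (3). You attribute the exclusion of $P_i'\le P_i$ for $i\neq j$ to the hypothesis $|I_{\vec{P}'}|<10^9|I_{\vec{P}}|$, but that hypothesis cannot do any work: once $P_j'\le P_j$ with $P_j'\neq P_j$ one already has $I_{\vec{P}'}\subsetneq I_{\vec{P}}$, so the scale inequality is automatic and provides no ``margin.'' The actual mechanism --- and the one the paper invokes --- is the lacunary separation \emph{within} each tri-tile: the intervals $\omega_{P_1},\omega_{P_2},\omega_{P_3}$ all have length $|I_{\vec{P}}|^{-1}$ and are separated from one another by a large fixed number of units of that common length (the offset $n\approx 2^5$ coming from $\xi_2-\xi_1\approx 2^{-i}$ versus tiles of side $2^{-i-5}$). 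If $3\omega_{P_j}\subseteq 3\omega_{P_j'}$ with the scales $10^9$-separated, then every $\omega_{P_i}$ sits within $O(1)$ lengths $|\omega_{P_j}|\ll|\omega_{P_j'}|$ of $\omega_{P_j}$, hence essentially on top of $\omega_{P_j'}$, while $\omega_{P_i'}$ lies many units of the \emph{larger} length $|\omega_{P_j'}|$ away from $\omega_{P_j'}$; thus $3\omega_{P_i}\not\subseteq 3\omega_{P_i'}$ and $P_i'\not\le P_i$, which combined with clause (2) gives $P_i'\lesssim' P_i$. You should replace your appeal to the scale hypothesis (and the deferral to the bookkeeping in \cite{MTTBiest2}) with this separation argument; the additive identity $\omega_{P_3}=\omega_{P_1}+\omega_{P_2}$ alone does not transfer the inclusions --- what you need is that the whole frequency cube $Q_{\vec{P}}$ is contained in a bounded dilate of each of its three faces.
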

\begin{proof}
With a loss of a factor $3^2$, we may assume that $\sigma_1=\sigma_2$.  We also may assume that they are both 0; the other cases are handled precisely the same, modulo some minor changes of notation.  Also, by a refinement and loss of $O(1)$ in the estimates, we may freely assume the two families are sparse.  We prove the rank 1 condition only for $\vec{\mathbf{P}}$, but the proof works identically for $\vec{\mathbf{Q}}$.  We prove each of the three parts of Definition \ref{rank1def} separately.
\begin{enumerate}
\item To establish (1) in the definition, suppose that $P_1=P_1'$, say.  Then clearly the scales of the tiles must be the same; suppose this scale is $j$.  Supposing that the functions $\phi_{P,t}$ live on intervals of slightly smaller scale, say $2^{-j-5}$, then by the construction above, if $\xi_1\in P_1=[2^{-j-5}\ell_1,2^{-j-5}(\ell_1+1)]$ and $\xi_2\in  P_2=[2^{-j-5}\ell_2,2^{-j-5}(\ell_2+1)]$ then from the fact that $\xi_2-\xi_1\in [2^{-j},2^{-j+1}]$ (by the factor of $\phi_j(\xi_2-\xi_1)$), it is easy to deduce that $\ell_2-\ell_1$ can only be selected from a finite family of positive integers (which are nonzero as well).  Thus we may lose a finite factor in the estimates and assume that $\ell_2=\ell_1+n$ for some fixed positive integer $n$, which is away from zero.  Thus given a $P_1$, there is exactly one $P_2$, and hence $P_2=P_2'$.  The definition of $P_3$ is $P_1+P_2$, so we know $P_3=P_3'$ as well.  The other two possible cases follow in a similar fashion.
\item Suppose that for some $t$, $P_t'\le P_t$.  By the previous step, we may assume they are not equal, hence $I_{P'}\subsetneq I_P$ and $3\omega_{P_t}\subset 3\omega_{P_t'}$.  Then certainly, $10^7P_s\lesssim 10^7P_{s'}$.
\item The $P_t$ intervals are separated by a large number of units of length $|I_P|^{-1}$, and so the third part of the definition holds. 
\end{enumerate}
\end{proof}

By the uniformity of adaptedness in $\alpha,\beta$, we may drop the dependence on $\alpha,\beta$ and will write simply $\phi_P^1:=\phi_{P,1,\alpha}$, since the presence of $\alpha$ does not affect the adaptedness of $\phi_{P,1,\alpha}$ to $I_P\times P_1$.  The usual limiting arguments suffice to reduce to finite subsets of $\vec{\mathbf{P}}$ and $\vec{\mathbf{Q}}$.
\begin{assumption}
We are now free to study the following for finite families of rank 1 tiles $\vec{\mathbf{P}}$ and $\vec{\mathbf{Q}}$ and functions $\phi_P^t$ and $\phi_Q^t$ which are $L^2$-normalized and adapted in the appropriate way:
\[
\left|\sum_{P\in\vec{\mathbf{P}}}\frac{1}{|I_P|^{1/2}}\langle f_3, \phi_P^1\rangle\left\langle B_P(f,g), \phi_P^2\right\rangle\left\langle f_41_{|I_P|\ge 2^{N_2(x)}},\phi_P^3\right\rangle\right|,
\]
where
\[
B_P(f,g):=\sum_{Q\in\vec{\mathbf{Q}}:Q_3\subset \tilde{P}_2}\frac{1}{|I_Q|^{1/2}}\langle f_1, \phi_{Q}^1\rangle\langle f_2, \phi_{Q}^2\rangle\phi_{Q}^3,
\]
provided the estimates are deduced in a way which does not depend on $\vec{\mathbf{P}}$ and $\vec{\mathbf{Q}}$.
\end{assumption}

\section{Restricted Weak-Type Interpolation}
In this chapter, we discuss the so-called restricted weak-type interpolation method.  This method is valid for general $n$-linear operators, but we state them here for our specialized case.

\begin{definition}
A tuple $\alpha=(\alpha_1,\alpha_2,\alpha_3,\alpha_4)$ is called admissible if 
\begin{enumerate}
\item $-\infty<\alpha_i<1$ for all $i=1,2,3,4$
\item $\sum \alpha_i=1$
\item At most one $\alpha_i<0$.
\end{enumerate}
We call an index $i$ good if $\alpha_i\ge 0$ and bad if $\alpha_i<0$.  A good tuple is an admissible tuple without a bad index.  A bad tuple is a tuple with a bad index.
\end{definition}
\begin{definition}We define the term \textbf{majorant} as follows.

\begin{enumerate}
\item If $\alpha$ and $\beta$ are good tuples and there exists a $j_0$ such that
\[
\alpha_j<\beta_j\textrm{ for all }j\neq j_0,
\]
then we say that $\beta$ is a majorant of $\alpha$ with index $j_0$.
\item If $\alpha$ or $\beta$ is a bad tuple, we assume that $j_0$ is the bad index (if they are both bad, this $j_0$ is the same for both).  In this case, we say that $\beta$ is a majorant of $\alpha$ with index $j_0$ if
\[
\alpha_j<\beta_j\textrm{ for all }j\neq j_0.
\]
\end{enumerate}
\end{definition}
\begin{definition}
Let $E,E'$ be sets of finite measure.  We say that $E'$ is a major subset of $E$ if $E'\subseteq E$ and $|E'|\ge \frac{1}{2}|E|$.
\end{definition}
\begin{definition}
If $E$ is a set of finite measure, we denote by $X(E)$ the space of functions supported on $E$ such that $\|f\|_\infty\le 1$.
\end{definition}
\begin{definition}
If $\alpha$ is an admissible tuple, we say that a $4$-linear form $\Lambda$ is of restricted weak-type\footnote{It is worth mentioning here that this is a slightly stronger definition of restricted weak type than others which appear in the literature, e.g. \cite{weaktypeinterpolation}.  That said, there is a much stronger interpolation theorem available for this variant.} $\alpha$ if for every sequence $E_1,E_2,E_3,E_4$ of subsets of $\mathbb{R}$ of finite measure, there exists a major subset $E_j'$ of $E_j$ for each bad index $j$ (there is at most one, though possibly none) such that
\[
\Lambda(f_1,f_2,f_3,f_4)\lesssim |E'|^\alpha,
\]
for all $f_i\in X(E_i)$, $i=1,2,3,4$, where we adopt the convention that $E'_i=E_i$ when $i$ is a good index, and
\[
|E'|^\alpha=|E_1'|^{\alpha_1}|E_2'|^{\alpha_2}|E_3'|^{\alpha_3}|E_4'|^{\alpha_4}.
\]
\end{definition}
\begin{definition}
Suppose that a 4-linear form $\Lambda$ is of restricted weak type $\alpha$ for some family of tuples $\alpha\in A$ which all have the same bad index $j_0$.  Suppose further that the same major subset $E_{j_0}'$ in the definition of restricted weak type can be used for all elements of $A$.  Then we say that $\Lambda$ is of uniformly restricted weak type.
\end{definition}
The basic idea here is that if $\Lambda(f_1,f_2,f_3,f_4)=\int T(f_1,f_2,f_3)f_4dx$, then a good tuple can be written as $(1/p_1,1/p_2,1/p_3,1/p_4)$ and corresponds to a standard H\"{o}lder type estimate for $T$, i.e. $L^{p_1}\times L^{p_2}\times L^{p_3}\rightarrow L^{p_4'}$.  If a tuple had bad index 4, say, then the target space of $T$, $L^{p_4'}$, is necessarily not a Banach space since $1/p_4'<1$.  Thus one cannot invoke immediately more standard interpolation results about mappings between Banach spaces.  See, for example, \cite{strichartz}.\footnote{It was quite a treat, years ago, to go looking for the original source of this result and to discover it was written by the author's friend and former REU mentor, Bob Strichartz.}

The following theorem guarantees that one can interpolate multilinear restricted weak-type estimates as one can with usual multilinear estimates, provided the interpolated tuple is a good tuple.
\begin{theorem}\label{goodtuples}
Let $\alpha^{(1)},...,\alpha^{(4)}$ be admissible tuples, and let $\alpha$ be a \textbf{good} tuple such that
\[
\alpha=\theta_1\alpha^{(1)}+...+\theta_4\alpha^{(4)},
\]
where $0<\theta_s< 1$ for $s=1,2,3,4$ and $\theta_1+...+\theta_4=1$.  Suppose that $\Lambda$ is of restricted weak type $\alpha^{(s)}$ for $s=1,2,3,4$.  Then $\Lambda$ is of restricted weak type $\alpha$.
\end{theorem}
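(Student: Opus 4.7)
The statement is an interpolation result at a good tuple $\alpha$, and since $\alpha$ is good, showing that $\Lambda$ is of restricted weak-type $\alpha$ requires no major subset: the goal is simply to establish
\[
|\Lambda(f_1,f_2,f_3,f_4)| \lesssim \prod_{i=1}^{4}|E_i|^{\alpha_i}
\]
for every quadruple of measurable sets $E_1,\dots,E_4$ of finite measure and every $f_i \in X(E_i)$. The plan is to derive this from the four hypothesized restricted weak-type estimates at the $\alpha^{(s)}$ by a weighted geometric-mean (H\"older-type) argument.

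First I would fix the sets $E_1,\dots,E_4$ once and for all. For each $s\in\{1,2,3,4\}$ for which $\alpha^{(s)}$ has a bad index $j_s$, the hypothesis that $\Lambda$ is of restricted weak type $\alpha^{(s)}$ furnishes a major subset $E'_{j_s,s}$ of $E_{j_s}$, depending on $E_1,\dots,E_4$ and on $s$ but not on the $f_i$, such that
\[
|\Lambda(f_1,f_2,f_3,f_4)| \lesssim |E'_{j_s,s}|^{\alpha^{(s)}_{j_s}} \prod_{i\ne j_s}|E_i|^{\alpha^{(s)}_i}
\]
for every $f_i\in X(E_i)$; if $\alpha^{(s)}$ is itself good, no major subset is produced and this collapses to $|\Lambda|\lesssim\prod_i |E_i|^{\alpha^{(s)}_i}$. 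Because $|E'_{j_s,s}|\ge |E_{j_s}|/2$ and $\alpha^{(s)}_{j_s}<0$, the elementary inequality
\[
|E'_{j_s,s}|^{\alpha^{(s)}_{j_s}} \le 2^{|\alpha^{(s)}_{j_s}|}\,|E_{j_s}|^{\alpha^{(s)}_{j_s}},
\]
with a multiplicative loss that is uniformly bounded since only four values of $s$ are involved, lets me upgrade each of the four bounds to
\[
|\Lambda(f_1,f_2,f_3,f_4)| \lesssim \prod_{i=1}^{4}|E_i|^{\alpha^{(s)}_i}, \qquad s=1,2,3,4.
\]

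Next I would take the weighted geometric mean of these four inequalities. Raising the $s$-th inequality to the power $\theta_s$ and multiplying over $s$, the left-hand sides telescope under $\theta_1+\theta_2+\theta_3+\theta_4=1$ to $|\Lambda(f_1,f_2,f_3,f_4)|$, while the right-hand sides consolidate to
\[
\prod_{s=1}^{4}\left(\prod_{i=1}^{4}|E_i|^{\alpha^{(s)}_i}\right)^{\theta_s} = \prod_{i=1}^{4}|E_i|^{\sum_s\theta_s\alpha^{(s)}_i} = \prod_{i=1}^{4}|E_i|^{\alpha_i},
\]
which is exactly the restricted weak-type $\alpha$ estimate for $\Lambda$, with implicit constant equal to the product of the four constants produced above, each weighted by $\theta_s$.

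I do not anticipate a genuine obstacle here; the proof is essentially a multilinear H\"older inequality in disguise. The only mildly delicate point is the bookkeeping with the major subsets, since the bad indices $j_s$ may vary with $s$ and the four subsets $E'_{j_s,s}$ need not be mutually compatible. This incompatibility is harmless because each of the four inequalities is invoked on its own before the geometric mean is formed. The crucial structural feature is that $\alpha$ is itself a good tuple: the target inequality needs no major subset, so a clean multiplicative combination of the four vertex hypotheses suffices, and one does not have to address the much more delicate problem of extracting a single common major subset for the conclusion.
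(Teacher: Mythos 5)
Your proof is correct and follows essentially the same route as the paper's (very terse) argument: raise each of the four vertex estimates to the power $\theta_s$ and multiply, using $\sum_s\theta_s=1$ and $\sum_s\theta_s\alpha^{(s)}=\alpha$. The only difference is that you make explicit the harmless step of replacing $|E'_{j_s,s}|^{\alpha^{(s)}_{j_s}}$ by $2^{|\alpha^{(s)}_{j_s}|}|E_{j_s}|^{\alpha^{(s)}_{j_s}}$ for the (possibly different) bad indices of the vertex tuples, a bookkeeping point the paper's one-line proof silently absorbs.
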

\begin{proof}
Consider the quantities
\[
|\Lambda(f_1,f_2,f_3,f_4)|^{\theta_i}\lesssim \left(|E|^{\alpha^{(i)}}\right)^{\theta_i}
\]
and multiply them together.
\end{proof}
The following theorem says that at good tuples on the interior of a convex, open set where a 4-linear form is of restricted weak type, then it is of strong type on the interior of the set.
\begin{theorem}
Let $\alpha^{(1)},...,\alpha^{(4)}$ be tuples, and let $\alpha$ be a \textbf{good} tuple in the interior of the convex hull of $\alpha^{(1)},...,\alpha^{(4)}$.  Suppose that $\Lambda$ is of restricted weak-type $\alpha^{(s)}$ for $s=1,2,3,4$.  Then $\Lambda$ is of strong-type $\alpha$.
\end{theorem}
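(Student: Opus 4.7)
The plan is to adapt the classical multilinear Marcinkiewicz interpolation via layer cake decomposition. Since $\alpha$ is good, I may write $\alpha=(1/p_1,1/p_2,1/p_3,1/p_4)$ with $p_i\ge 1$ and $\sum_i 1/p_i=1$, so the conclusion becomes the strong-type bound $|\Lambda(f_1,\ldots,f_4)|\lesssim \prod_i\|f_i\|_{L^{p_i}}$. By standard density and homogeneity reductions it suffices to take simple $f_i$ with $\|f_i\|_{L^{p_i}}=1$ and decompose each via the layer cake $f_i=\sum_{k_i\in\Z}2^{k_i}\chi_{E_i^{k_i}}$ with $E_i^{k_i}:=\{2^{k_i}\le|f_i|<2^{k_i+1}\}$; the normalization yields $|E_i^{k_i}|\lesssim 2^{-k_i p_i}$. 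Multilinearity then gives
\[
|\Lambda(f_1,\ldots,f_4)|\le\sum_{\vec{k}\in\Z^4}2^{k_1+k_2+k_3+k_4}\bigl|\Lambda(\chi_{E_1^{k_1}},\ldots,\chi_{E_4^{k_4}})\bigr|.
\]

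Since $\chi_{E_i^{k_i}}\in X(E_i^{k_i})$, the restricted weak-type hypothesis at $\alpha^{(s)}$ provides a bound on each summand essentially of the form $\prod_i |E_i^{k_i}|^{\alpha_i^{(s)}}$, modulo a major-subset shrinkage along a possible bad index of $\alpha^{(s)}$. Four such estimates are available, and the plan is to use, at each $\vec{k}$, the best of them. Here the interior condition is crucial: since $\alpha$ lies in the open interior of the convex hull of $\{\alpha^{(s)}\}$, for every coordinate direction $\pm e_i$ there exists at least one vertex $\alpha^{(s)}$ whose $i$th exponent is strictly above (respectively below) $\alpha_i$. The corresponding restricted weak-type estimate contributes a geometric factor of the form $2^{(\alpha_i-\alpha_i^{(s)})p_i k_i}$ which decays as $|k_i|\to\infty$ in that direction, supplying the summability needed to close the quadruple series.

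Summing the resulting geometric series and recombining via H\"older in the layer-cake variables collapses to $\prod_i\|f_i\|_{L^{p_i}}$, yielding the desired strong-type estimate. The main obstacle is the combinatorial bookkeeping: one must partition $\Z^4$ into regions where a single $\alpha^{(s)}$ is optimal and verify that the corresponding geometric series converges at a rate bounded independently of the individual $|E_i^{k_i}|$. The major-subset subtlety (present when some $\alpha^{(s)}$ is a bad tuple) introduces a further nuisance, since the restricted weak-type bound is phrased in terms of $|E'|^{\alpha^{(s)}}$ rather than $|E|^{\alpha^{(s)}}$; this can be handled either by first invoking Theorem \ref{goodtuples} to obtain restricted weak-type at a small family of good tuples surrounding $\alpha$ (where no major subsets are needed) and then running the layer-cake argument with those, or more cleanly by reinterpreting the restricted weak-type hypothesis in the Lorentz scale and applying real multilinear interpolation at the interior good tuple $\alpha$ to upgrade boundedness from $L^{p_i,1}$ to $L^{p_i,p_i}=L^{p_i}$.
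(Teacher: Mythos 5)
The paper does not actually prove this theorem: its ``proof'' is a citation to an external reference (Corollary 1 of \cite{weaktypeinterpolation}), so any real argument you supply is by definition a different route. Your overall strategy --- layer-cake decomposition against the vertex estimates, or alternatively one-variable-at-a-time real interpolation in the Lorentz scale --- is the standard proof of statements of this type, and your second (Lorentz) route is sound as stated: restricted type at good tuples filling a relative neighborhood of $\alpha$ gives $\prod_i L^{q_i,1}$ bounds for $(1/q_i)$ near $\alpha$, and interpolating in each slot separately upgrades $L^{p_i,1}$ to $L^{p_i,p_i}=L^{p_i}$. Your handling of the major-subset nuisance (first passing to good tuples via Theorem \ref{goodtuples}) is also the right move, since for a bad vertex $\alpha^{(s)}$ the inequality $|E_j^{k_j}|^{\alpha_j^{(s)}}\le 2^{-k_jp_j\alpha_j^{(s)}}$ runs the wrong way in the bad coordinate.

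The one step in your first route that does not work as written is the claim that choosing, at each $\vec{k}$, the best of the available vertex estimates already yields a convergent geometric series. All tuples in play lie in the hyperplane $\sum_i\beta_i=1$, so the $\vec{k}$-th term, bounded via a good tuple $\beta$ near $\alpha$ and the crude estimate $|E_i^{k_i}|\le 2^{-k_ip_i}$, is $2^{\sum_i k_i(1-p_i\beta_i)}=2^{\sum_i k_ip_i(\alpha_i-\beta_i)}$; along the scaling diagonal $k_1p_1=\cdots=k_4p_4=t$ this exponent equals $t\sum_i(\alpha_i-\beta_i)=0$ for \emph{every} admissible $\beta$. There is no geometric decay in that direction, each diagonal term is only $O(1)$, and the series diverges. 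The fix --- which is the real content hiding in your phrase ``recombining via H\"older in the layer-cake variables'' --- is to reserve the full strength of $\sum_{k}2^{kp_i}|E_i^k|\lesssim\|f_i\|_{p_i}^{p_i}$ for the diagonal: the sequences $(2^{k}|E_i^{k}|^{1/p_i})_k$ lie in $\ell^{p_i}$ with $\sum_i 1/p_i=1$, so H\"older in the diagonal parameter sums that direction, while the choice of vertex supplies geometric decay only transverse to it. With that made explicit the argument closes; but the main obstacle is this degenerate direction, not the combinatorial bookkeeping of partitioning $\Z^4$ by which vertex is optimal.
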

\begin{proof}
See \cite[Corollary 1, pp 383--384]{weaktypeinterpolation}.
\end{proof}

These previous theorems actually hold for a weaker definition of restricted weak-type.  They are not strong enough for our purposes because they require all the interpolated tuples to be good in order to produce estimates.  The following three lemmas are replacements.

\begin{lemma}\label{badtuples1}
Suppose that a 4-linear form $\Lambda$ is of uniformly restricted weak type $\alpha^{(s)}$ for $s=1,...,4$, where all bad indices, if they exist, have the same bad index.  Suppose that
\[
\alpha=\theta_1\alpha^{(1)}+...+\theta_4\alpha^{(4)},
\]
where $0<\theta_s< 1$ for $s=1,2,3,4$ and $\theta_1+...+\theta_4=1$.  Then $\Lambda$ is of uniform restricted weak type for $\{\alpha,\alpha^{(1)},\alpha^{(2)},\alpha^{(3)},\alpha^{(4)}\}$.  Thus $\Lambda$ is of uniform restricted weak type in the interior of the convex hull of the $\alpha^{(s)}$.
\end{lemma}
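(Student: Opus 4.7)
The plan is to exploit the word \emph{uniformly} in the hypothesis: because a single major subset $E'_{j_0}$ of $E_{j_0}$ works simultaneously for every $\alpha^{(s)}$, we can multiply the four restricted weak-type bounds together without having to reconcile competing choices of subset. This is exactly the leverage the standard (non-uniform) interpolation theorem lacks when a bad index is present.

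Concretely, first I fix arbitrary sets $E_1,E_2,E_3,E_4$ of finite measure. By the uniform restricted weak-type hypothesis, there is a single major subset $E'_{j_0}\subseteq E_{j_0}$ (setting $E'_i=E_i$ for the good indices) such that
\[
|\Lambda(f_1,f_2,f_3,f_4)|\lesssim |E'|^{\alpha^{(s)}}\qquad\text{for every }s=1,\dots,4
\]
and every $f_i\in X(E_i)$. Next, since $\theta_s>0$, I raise the $s$th inequality to the power $\theta_s$ and multiply the four resulting inequalities:
\[
|\Lambda(f_1,f_2,f_3,f_4)|^{\theta_1+\cdots+\theta_4}\lesssim \prod_{s=1}^{4}|E'|^{\theta_s\alpha^{(s)}}.
\]
The exponents on the right combine componentwise across the four factors, so the right-hand side equals $|E'|^{\theta_1\alpha^{(1)}+\cdots+\theta_4\alpha^{(4)}}=|E'|^{\alpha}$, while the left side collapses to $|\Lambda(f_1,f_2,f_3,f_4)|$ because $\sum_s\theta_s=1$. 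This yields $|\Lambda(f_1,f_2,f_3,f_4)|\lesssim |E'|^{\alpha}$, with the same $E'_{j_0}$ that was already chosen for $\alpha^{(1)},\dots,\alpha^{(4)}$. Thus $\Lambda$ is of uniform restricted weak type for the enlarged family $\{\alpha,\alpha^{(1)},\dots,\alpha^{(4)}\}$.

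For the final assertion, any point $\alpha$ in the interior of the convex hull of $\alpha^{(1)},\dots,\alpha^{(4)}$ admits a representation $\alpha=\sum_s\theta_s\alpha^{(s)}$ with strictly positive weights, so the previous argument applies and produces uniform restricted weak type at $\alpha$. One small bookkeeping point to confirm is admissibility of $\alpha$: the conditions $\alpha_i<1$ and $\sum_i\alpha_i=1$ pass through any convex combination, and the ``at most one bad index'' hypothesis is preserved precisely because all the $\alpha^{(s)}$ share the same bad index $j_0$ (so only coordinate $j_0$ of $\alpha$ can be negative).

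The only conceptual obstacle is making sure that the definition of $|E'|^{\alpha}$ is the one compatible with the multiplication step above — i.e.\ that $|E'|^{\alpha^{(s)}}\cdot|E'|^{\alpha^{(t)}}=|E'|^{\alpha^{(s)}+\alpha^{(t)}}$ componentwise — which is immediate from the product definition $|E'|^{\alpha}=\prod_i|E'_i|^{\alpha_i}$. Beyond that the proof is a one-line Hölder-type combination; all the real work has already been done in packaging the hypothesis as \emph{uniform} restricted weak type, which is precisely what lets the bad-index case be handled without the usual loss.
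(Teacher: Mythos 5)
Your argument is exactly the paper's proof: the paper simply says to raise each estimate $|\Lambda|\lesssim|E'|^{\alpha^{(s)}}$ to the power $\theta_s$ and multiply, ``using the uniformity in the major subset,'' which is precisely the leverage you identify and spell out. Your additional bookkeeping on admissibility and the convex-hull statement is correct and consistent with the paper's (much terser) treatment.
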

\begin{proof}
Consider the quantities
\[
|\Lambda(f_1,f_2,f_3,f_4)|^{\theta_i}\lesssim \left(|E|^{\alpha^{(i)}}\right)^{\theta_i}
\]
and multiply them together, using the uniformity in the major subset.
\end{proof}
\begin{lemma}\label{badtuples2}
Suppose that $\alpha^{(s)}$ is a collection of tuples which are either good or bad with a fixed bad index for which $\Lambda$ is of restricted weak type.  Let
\[
\alpha:=\theta_1\alpha^{(1)}+...+\theta_4\alpha^{(4)},
\]
where $0<\theta_s< 1$ for $s=1,2,3,4$ and $\theta_1+...+\theta_4=1$.  We assume that some $\alpha^{(j)}$ is a majorant of $\alpha$ with index $j_0$, where
\begin{enumerate}
\item if $\alpha$ is good then $j_0$ is an index for which $\alpha_{j_0}>0$.
\item if $\alpha$ is bad then $j_0$ is that index.
\end{enumerate}
Then one has that $\Lambda$ is of restricted weak type $\alpha$ as well.
\end{lemma}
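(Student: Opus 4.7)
My plan is to reduce the statement to the uniform setting of Lemma \ref{badtuples1} by using the majorant hypothesis to construct a single major subset on which all four restricted weak-type estimates hold simultaneously, and then combine them by the same Hölder-type argument used there.

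First, I would apply the restricted weak-type estimate for the majorant $\alpha^{(j)}$ to the sets $E_1, \ldots, E_4$, obtaining a major subset $E^{(j)}_{j_0}\subseteq E_{j_0}$ with
\[
|\Lambda(f_1,f_2,f_3,f_4)| \lesssim |E^{(j)}|^{\alpha^{(j)}}
\]
for all $f_i\in X(E_i)$ and $f_{j_0}\in X(E^{(j)}_{j_0})$. Then, for each remaining $s\neq j$, I would apply the restricted weak-type estimate for $\alpha^{(s)}$ to the refined tuple obtained by replacing $E_{j_0}$ by $E^{(j)}_{j_0}$; since the bad indices (when they exist) of all the $\alpha^{(s)}$ coincide with the same fixed index $j_0$, this produces a further major subset $E^{(s)}_{j_0}\subseteq E^{(j)}_{j_0}$ together with the corresponding bound $|\Lambda|\lesssim |E^{(s)}|^{\alpha^{(s)}}$ valid on $E^{(s)}_{j_0}$. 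Setting $E'_{j_0}:=\bigcap_{s=1}^{4}E^{(s)}_{j_0}$, provided this intersection is still a major subset of $E_{j_0}$, all four estimates hold simultaneously for $f_{j_0}\in X(E'_{j_0})$, and raising each to the power $\theta_s$ and multiplying gives
\[
|\Lambda|^{\theta_1+\cdots+\theta_4}\lesssim \prod_{s=1}^{4}\bigl(|E'|^{\alpha^{(s)}}\bigr)^{\theta_s}=|E'|^{\alpha},
\]
which is the desired restricted weak-type bound for $\alpha$.

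The main obstacle is justifying that the iterated refinement $E'_{j_0}$ remains a major subset of $E_{j_0}$. This is where the majorant hypothesis and the sign condition on $j_0$ (parts (1) and (2) of the statement) are used: the major subset arising from $\alpha^{(j)}$ serves as the anchor, and the subsequent refinements for each $\alpha^{(s)}$ with $s\neq j$ must only shrink the set by a controlled amount. To make this rigorous, I would strengthen the restricted weak-type hypotheses in the usual way so that for any prescribed $\eta>0$ one obtains a major subset of measure at least $(1-\eta)|E_{j_0}|$; this is customary in time-frequency analysis when the exceptional set is defined as the region where certain maximal functions exceed a threshold, since the threshold can be tuned to give the desired measure. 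Choosing $\eta$ sufficiently small and summing the four resulting exceptional-set contributions then yields $|E'_{j_0}|\ge \tfrac{1}{2}|E_{j_0}|$, and the sign condition on $\alpha_{j_0}$ guarantees that the combined exponent at the $j_0$-slot points in the correct direction so the Hölder combination above really does produce an upper bound of the form $|E'|^{\alpha}$.
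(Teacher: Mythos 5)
Your first stage (anchor on the majorant's major subset, refine for the remaining tuples, intersect, then take the $\theta_s$-weighted geometric mean) is the right skeleton for the case in which $\alpha$ is a bad tuple, but two points need repair. First, the upgrade from ``major subset of measure $\ge\frac12|E_{j_0}|$'' to ``subset of measure $\ge(1-\eta)|E_{j_0}|$'' cannot be justified by appealing to how exceptional sets are built from maximal-function level sets in the application: the lemma is an abstract interpolation statement whose only hypothesis is the definition of restricted weak type, and assuming the major subsets come from tunable thresholds amounts to assuming the \emph{uniform} restricted weak type already handled by Lemma \ref{badtuples1}. The upgrade is nevertheless provable from the abstract hypothesis by exhaustion: apply the $\alpha^{(s)}$-estimate to $E_{j_0}$, then to the leftover $E_{j_0}\setminus G_1$, and so on for $O(\log(1/\eta))$ steps, decomposing $f_{j_0}$ over the resulting disjoint pieces; since each application is to a set of measure between $\eta|E_{j_0}|$ and $|E_{j_0}|$, the constant degrades only by a factor of order $\log(1/\eta)\,\eta^{-|\alpha^{(s)}_{j_0}|}$. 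With that in hand, intersecting four sets of measure $\ge(1-\frac18)|E_{j_0}|$ gives a genuine major subset, and your geometric-mean step closes case (2).

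The genuine gap is case (1), where $\alpha$ is a good tuple. There the definition of restricted weak type $\alpha$ demands the bound for \emph{every} $f_{j_0}\in X(E_{j_0})$ --- by convention no subset may be removed at a good index --- whereas your argument only delivers the bound for $f_{j_0}$ supported on the major subset $E'_{j_0}$, and the constant $C_\eta$ blows up as $\eta\to0$, so you cannot reach the full set by shrinking $\eta$. The fix, and the actual role of hypotheses (1)--(2), is to iterate the nested major-subset construction \emph{indefinitely}, writing $E_{j_0}=\bigcup_{l\ge0}E^{(l)}$ with leftovers of measure $|F_l|\lesssim c^{l}|E_{j_0}|$ for some $c<1$, and to bound the contribution of $f_{j_0}1_{E^{(l)}}$ by the geometric mean $|F_l|^{\alpha_{j_0}}\prod_{i\ne j_0}|E_i|^{\alpha_i}\lesssim c^{l\alpha_{j_0}}|E|^{\alpha}$; the resulting series converges precisely because $\alpha_{j_0}>0$. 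Your closing remark attributes the sign condition to making the H\"older combination ``point in the right direction,'' but that identity holds regardless of signs --- the positivity of $\alpha_{j_0}$ is what sums the exhaustion. (Note that the paper itself does not prove the lemma but cites \cite[Lemma 3.10]{interp}, whose proof is exactly such an exhaustion argument.)
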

\begin{proof}
See the appropriate appendix of \cite{camilbook2}.  It is also essentially \cite[Lemma 3.10]{interp}
\end{proof}

These two lemmas give one the ability to interpolate between restricted weak-type estimates.  However, we really want to be able to produce strong estimates for bad tuples.  This is accomplished through the following lemma, which is just a special case of \cite[Lemma 3.11]{interp}.

\begin{lemma}\label{restrictedweaktostrong}
Let $\alpha$ be a bad tuple with bad index 4.  Suppose that our 4-linear form $\Lambda(f_1,f_2,f_3,f_4)$ satisfies a restricted weak-type estimate in an open neighborhood of $\alpha$.  Then if $\alpha_i=1/p_i$ for $i=1,2,3$ with $1<p_1,p_2,p_3<\infty$ and $\alpha_4=1/p_4=1-(1/p_4')$ with $1\le p_4'<\infty$, we have
\[
\|T(f_2,f_2,f_3)\|_{p_4'}\le C\|f_1\|_{p_1}\|f_2\|_{p_2}\|f_3\|_{p_3}
\]
for all functions $f_i$ supported on a set of finite measure.
\end{lemma}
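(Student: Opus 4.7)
The plan is to execute the standard passage from restricted weak-type to strong-type estimates for multilinear forms, using a level-set decomposition of the three ``good'' inputs together with a careful choice of nearby admissible tuples. Throughout, the key feature is that $\alpha$ sits in an \emph{open} neighborhood of the admissible region where restricted weak-type holds, which lets us purchase geometric decay in several directions simultaneously.

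First, by the homogeneity of both sides it is enough to prove the estimate under the normalization $\|f_i\|_{p_i}=1$ for $i=1,2,3$, and by density it suffices to treat simple functions. Since the bad index is $4$, the natural dual formulation is: for every set $F$ of finite measure there exists a major subset $F'\subseteq F$ with $|F'|\ge |F|/2$ such that
\[
|\Lambda(f_1,f_2,f_3,\chi_{F'})|\lesssim |F|^{1/p_4}
\]
(where we simply interpret $|F|^{1/p_4}$ as a formal quantity when $p_4'<1$). A standard argument, essentially that of \cite[Lemma 3.11]{interp}, converts such restricted bounds on characteristic functions into the desired strong-type bound for $T$ on $L^{p_1}\times L^{p_2}\times L^{p_3}\rightarrow L^{p_4'}$.

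Next, I would perform the level set decomposition $f_i=\sum_{k_i\in\Z} f_i^{k_i}$, where $f_i^{k_i}:=f_i\chi_{\{2^{k_i}\le |f_i|<2^{k_i+1}\}}$ is pointwise comparable to $2^{k_i}\chi_{E_i^{k_i}}$ with $|E_i^{k_i}|\lesssim 2^{-k_i p_i}$. By multilinearity,
\[
\Lambda(f_1,f_2,f_3,\chi_{F'})=\sum_{k_1,k_2,k_3\in\Z}\Lambda(f_1^{k_1},f_2^{k_2},f_3^{k_3},\chi_{F'}).
\]
Now, because $\alpha$ lies in the interior of the restricted-weak-type region, I can select eight admissible tuples $\beta^{(\epsilon)}$, indexed by $\epsilon\in\{-,+\}^3$, such that $\operatorname{sgn}(\beta_i^{(\epsilon)}-1/p_i)=\epsilon_i$ for $i=1,2,3$, with all $\beta^{(\epsilon)}$ remaining bad tuples of bad index $4$ in the neighborhood. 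For each triple $(k_1,k_2,k_3)$ I apply the restricted-weak-type estimate at the tuple $\beta^{(\epsilon)}$ with $\epsilon_i=\operatorname{sgn}(k_i)$; after restoring the $2^{k_i}$ factors, the summand is bounded by
\[
|F|^{\beta^{(\epsilon)}_4}\prod_{i=1}^{3}2^{k_i(1-p_i\beta_i^{(\epsilon)})},
\]
and by construction each exponent $1-p_i\beta_i^{(\epsilon)}$ has sign opposite to $k_i$, producing a geometrically convergent triple sum whose total is $\lesssim |F|^{1/p_4}$ once we verify that $\beta^{(\epsilon)}_4$ is sufficiently close to $1/p_4$.

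The main obstacle is producing a \emph{single} major subset $F'$ that works simultaneously for all the finitely many tuples $\beta^{(\epsilon)}$ and for every $(k_1,k_2,k_3)$. This is exactly where the uniform restricted weak-type notion introduced above pays off: if the hypothesis provides uniformity across $\beta^{(\epsilon)}$ (as in Lemma \ref{badtuples1}), one simply takes the common $F'$. Otherwise, one defines an exceptional set $\Omega\subseteq F$ in terms of the Hardy--Littlewood maximal functions of the $f_i$, namely $\Omega=\bigcup_i\{x\in F: Mf_i(x)>C\}$ with $C$ large enough that $|\Omega|\le |F|/2$, and sets $F':=F\setminus\Omega$. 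On the complement of $\Omega$ all the necessary restricted weak-type estimates apply uniformly, and the geometric sum above goes through. This completes the reduction.
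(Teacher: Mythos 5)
The paper itself offers no proof of this lemma --- it is outsourced wholesale to \cite[Lemma 3.11]{interp} --- so what you have written is a reconstruction of that cited argument, and its skeleton (normalize, dualize against a major subset of $F$, decompose $f_1,f_2,f_3$ into level sets, and spend the openness of the neighborhood on geometric decay in $k_1,k_2,k_3$) is the right one. However, two steps do not close as written. First, the passage from the major-subset bound $|\Lambda(f_1,f_2,f_3,\chi_{F'})|\lesssim |F|^{1/p_4}$ to the strong bound on $\|T(f_1,f_2,f_3)\|_{p_4'}$ is precisely the nontrivial content of the lemma when $\alpha_4<0$ (so $p_4'<1$): that major-subset bound is equivalent only to the \emph{weak-type} estimate $\|T(f_1,f_2,f_3)\|_{L^{p_4',\infty}}\lesssim 1$ (take $F=\{|T|>\lambda\}$), and upgrading weak to strong requires running your level-set argument at a family of tuples near $\alpha$ with varying fourth coordinate and then applying Marcinkiewicz interpolation in the target exponent. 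Your appeal to ``a standard argument, essentially that of \cite[Lemma 3.11]{interp}'' for this conversion is circular, since that is the lemma under proof; the interpolation step has to be named and executed, and it consumes the openness hypothesis a second time.

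Second, the recipe $\epsilon_i=\operatorname{sgn}(k_i)$ controls the triple sum over $k$ but leaves the factor $|F|^{\beta_4^{(\epsilon)}}$ with $\beta_4^{(\epsilon)}=\alpha_4-(\epsilon_1+\epsilon_2+\epsilon_3)\delta$; since $|F|$ is arbitrary, the ratio $|F|^{\beta_4^{(\epsilon)}-\alpha_4}$ is unbounded, and ``$\beta_4^{(\epsilon)}$ sufficiently close to $1/p_4$'' does not repair this. The standard fix is to choose the perturbation signs according to $\operatorname{sgn}\bigl(\log(|E_i^{k_i}|/|F|)\bigr)$ rather than $\operatorname{sgn}(k_i)$, which gives $\prod_i |E_i^{k_i}|^{\beta_i}|F|^{\beta_4}=\prod_i|E_i^{k_i}|^{\alpha_i}\,|F|^{\alpha_4}\prod_{i=1}^{3}\min\bigl(|E_i^{k_i}|/|F|,\,|F|/|E_i^{k_i}|\bigr)^{\delta}$, simultaneously producing the correct power of $|F|$ and the decay needed to sum in $k$ (alternatively one normalizes $|F|=1$ by dilation, but the abstract form in the hypothesis carries no dilation symmetry). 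Your closing paragraph correctly isolates the major-subset uniformity issue, but note that the fix via $\Omega=\bigcup_i\{Mf_i>C\}$ is not a consequence of the abstract restricted weak-type hypothesis --- it uses knowledge of how those estimates were derived for the specific model --- so for the lemma as stated one genuinely needs the ``uniform'' variant of the hypothesis, or a major subset depending only on $F$ and the measures $|E_i|$.
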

This lemma says that once one has a tiny open set worth of restricted weak-type estimates, one can get strong estimates on the interior for a class of functions like $C_c^\infty(\mathbb{R})$, which is enough to extend to strong boundedness of $T$ by the usual density arguments.

\section{Size and Energy Estimates}
\begin{notation}
For ease of writing, we will make the following notation:
\[
\tilde{\phi}_P^3:=\phi_P^31_{|I_P|\ge 2^{N_2(x)}}
\]
\end{notation}
We also recall the following:
\begin{notation}
Given a rank 1 family of tri-tiles $\vec{\mathbf{Q}}$, suppose that $Q\in\vec{\mathbf{Q}}$.  $Q$ is then made up of three tiles, each given by the product of a fixed interval $I_Q$ with a frequency interval, which we will denote $\omega_{Q_t}$, $t=1,2,3$.
\end{notation}

The model in question is given by
\[
\sum_{P\in\vec{\mathbf{P}}}\frac{1}{|I_P|^{1/2}}\langle f_3, \phi_P^1\rangle\left\langle B_P(f_1,f_2), \phi_P^2\right\rangle\left\langle f_4,\tilde{\phi}_P^3\right\rangle,
\]
where
\[
B_P(f_1,f_2):=\sum_{Q\in\vec{\mathbf{Q}}:\omega_{Q_3}\subset \omega_{\tilde{P}_2}}\frac{1}{|I_Q|^{1/2}}\langle f_1, \phi_{Q}^1\rangle\langle f_2, \phi_{Q}^2\rangle\phi_{Q}^3,
\]
where $\vec{\mathbf{P}}$ and $\vec{\mathbf{Q}}$ are sparse, finite, rank 1 families of tri-tiles.

Following the standard multilinear harmonic analysis approach, as in \cite{DTT}, \cite{MTTBiest2}, \cite{camilbook}, and many others, we wish to discuss sizes, which will require the notion of a tree.
\begin{definition}
For any $t\in\{1,2,3\}$ and a tri-tile $\vec{P}_T\in\vec{\mathbf{P}}$, we define a $j$-tree with top $\vec{P}_T$ to be a collection of tri-tiles $T\subset\vec{\mathbf{P}}$ such that
\[
P_j\le P_{T,t}\textrm{ for all }\vec{P}\in T,
\]
where $P_{T,t}$ is the $t$-component of $\vec{P}_T$.  We will write $I_T$ and $\omega_{T,t}$ for $I_{\vec{P}_T}$ and $\omega_{\vec{P}_{T,t}}$, respectively.  We say that $T$ is a tree if it is a $t$-tree for some $1\le t\le 3$.
\end{definition}
It is worth remarking that a tree does not necessarily have to contain its top.
\begin{definition}
We will say that a tree $T$ is $t$-lacunary if it is a $t'$-tree for some $t\neq t'$.
\end{definition}
\begin{definition}
Let $t\in\{1,2,3\}$.  Two trees $T$ and $T'$ are said to be strongly $i$-disjoint if
\begin{enumerate}
\item $P_i\neq P_i'$ for all $\vec{P}\in T$ and $\vec{P}'\in T'$.
\item Whenever $\vec{P}\in T$, $\vec{P}'\in T'$, are such that $2\omega_{P_i}\cap 2\omega_{P_i'}\neq\emptyset$, then one has $I_{\vec{P}'}\cap I_T=\emptyset$, and similarly with $T$ and $T'$ reversed.
\end{enumerate}
\end{definition}

\subsection{Sizes}
\begin{definition}{\textbf{Sizes.}}
Suppose that $\vec{\mathbf{P}}$ is a finite collection of tri-tiles and $t\in\{1,2,3\}$.  Suppose also that $(a_{P_j})_{\vec{P}\in\vec{\mathbf{P}}}$ is a sequence of complex numbers.  Here one really should think of $a_{P_j}$ as being a sequence ``living'' on the tiles $P_j$ rather than the full tri-tile.
\[
\sizee_j(\left(a_{P_j})_{\vec{P}\in\vec{\mathbf{P}}}\right):=\sup_{T\subset\vec{\mathbf{P}}}\left(\frac{1}{|I_T|}\sum_{\vec{P}\in T}|a_{P_j}|^2\right)^{1/2},
\]
where the $T$ in the supremum ranges over all trees in $\vec{\mathbf{P}}$ which are $i$-trees for some $i\neq j$.  In other words, the supremum ranges over all trees which are $j$-lacunary.
\end{definition}
The above definitions work for general sequences, but for our purposes, we should keep in mind that the sequences we are interested in are
\begin{enumerate}
\item $a_{P_1}=\langle f_1,\phi_P^1\rangle$
\item $a_{P_2}=\left\langle B_P(f,g), \phi_P^2\right\rangle$
\item $a_{P_3}=\left\langle f_4,\tilde{\phi}_P^3\right\rangle$
\end{enumerate}

The heuristic meaning of these sizes is that the size of a sequence is a measure the extent to which it can concentrate on a single tree.  It should be thought of as a phase-space variant of the BMO norm.  Indeed, one has a relevant variant of the John-Nirenberg inequality:
\begin{proposition} If $\mathcal{I}$ is a finite family of dyadic intervals, $r$ is any positive real number and $(a_I)_{I\in\mathcal{I}}$, then define $\|(a_I)_I\|_{BMO(r)}$ by
\[
\|(a_I)_I\|_{BMO(r)}:=\sup_{I_0\in\mathcal{I}}\frac{1}{|I_0|^{\frac{1}{r}}}\left\|\left(\sum_{I\subseteq I_0}\frac{|a_I|^2}{|I|}\chi_I(x)\right)^{1/2}\right\|_r.
\]
Then if $0<p<q<\infty$,
\[
\|(a_I)_I\|_{BMO(p)}\sim \|(a_I)_I\|_{BMO(q)}.
\]
\end{proposition}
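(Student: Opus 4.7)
The easy direction $\|(a_I)_I\|_{BMO(p)} \lesssim \|(a_I)_I\|_{BMO(q)}$ for $p<q$ is immediate from Hölder's inequality: fixing $I_0 \in \mathcal{I}$, the square function $S_{I_0}(x) := \bigl(\sum_{I\subseteq I_0} |a_I|^2 |I|^{-1} \chi_I(x)\bigr)^{1/2}$ is supported in $I_0$, so $\|S_{I_0}\|_p \le |I_0|^{1/p-1/q} \|S_{I_0}\|_q$; dividing by $|I_0|^{1/p}$ and taking the supremum over $I_0 \in \mathcal{I}$ gives the bound.

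For the reverse inequality $\|(a_I)_I\|_{BMO(q)} \lesssim \|(a_I)_I\|_{BMO(p)}$, the plan is to prove a John--Nirenberg-type decay for the distribution function of $S_{I_0}$ and then integrate. After normalizing so that $\|(a_I)_I\|_{BMO(p)}=1$, the starting point is the purely algebraic splitting
\[
S_{I_0}(x)^2 = S_J(x)^2 + C_J, \qquad C_J := \sum_{I:\,J\subsetneq I\subseteq I_0}\frac{|a_I|^2}{|I|},
\]
valid for every dyadic $J\subseteq I_0$ and every $x\in J$, with $C_J$ constant on $J$.

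The next step is a Calder\'on--Zygmund stopping time: for a large threshold $A$, select the maximal dyadic intervals $\{J_k\}\subseteq I_0$ with $|J_k|^{-1}\int_{J_k} S_{I_0}^p > A^p$. Failure of the stopping condition on the dyadic parent, together with dyadic doubling, gives $\int_{J_k} S_{I_0}^p \le 2A^p |J_k|$, and combining this with the pointwise inequality $S_{I_0}^p \ge C_{J_k}^{p/2}$ on $J_k$ yields the key bound $C_{J_k} \lesssim A^2$. Chebyshev's inequality then gives $\sum_k |J_k| \le A^{-p}|I_0|$. Outside $\bigcup_k J_k$, dyadic Lebesgue differentiation forces $S_{I_0}\le A$ almost everywhere, while inside each $J_k$ one has $S_{I_0}^2 \le S_{J_k}^2 + CA^2$.

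Letting $\mu(\lambda) := \sup_{I_0} |I_0|^{-1}\bigl|\{x\in I_0: S_{I_0}(x) > \lambda\}\bigr|$, the preceding estimates combine (for $\lambda \gtrsim A$) into the self-improving inequality
\[
\mu(\lambda) \le A^{-p}\,\mu(\lambda/\sqrt{2}).
\]
Iterating this $O(\log \lambda)$ times with $A$ chosen large depending on $q$ produces arbitrary polynomial decay $\mu(\lambda) \lesssim_N \lambda^{-N}$, so the layer-cake formula delivers $\|S_{I_0}\|_q^q \lesssim |I_0|$, and undoing the normalization gives the equivalence. I expect the main technical obstacle to be the extraction of the stopping-time bound $C_{J_k}\lesssim A^2$ from the orthogonality identity --- once this identity and its consequence are pinned down, the remainder is a standard iteration of the good-$\lambda$ type.
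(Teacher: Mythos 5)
Your argument is correct and is essentially the standard proof of this discrete John--Nirenberg inequality; the paper itself does not prove the proposition but defers to Chapter 2 of \cite{camilbook}, where the same stopping-time/good-$\lambda$ iteration appears. The one point you should make explicit is that the stopping intervals $J_k$ are arbitrary dyadic intervals, not necessarily members of $\mathcal{I}$, so both the normalization $\int_J S_J^p\le |J|$ and the supremum defining $\mu$ must be extended from $I_0\in\mathcal{I}$ to all dyadic $J$; this is harmless because the maximal elements $\{I_m\}$ of $\{I\in\mathcal{I}:I\subseteq J\}$ are pairwise disjoint and lie in $\mathcal{I}$, so that $\|S_J\|_p^p=\sum_m\|S_{I_m}\|_p^p\le |J|\,\|(a_I)_I\|_{BMO(p)}^p$. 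With that adjustment the orthogonality identity, the bound $C_{J_k}\lesssim A^2$, and the iteration all close exactly as you describe.
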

\begin{proof}
See the appropriate section of Chapter 2 of \cite{camilbook}.
\end{proof}
The sizes defined above roughly correspond to this $BMO(r)$ norm when $r=2$.  We state several lemmas which will be used to estimate our model.  Since we will be using restricted weak-type interpolation (explained later on), we should recall a previous definition:
\begin{definition}
Suppose that $E$ is a set of finite measure.  We define the space $X(E)$ to denote the space of all functions $f$ supported on $E$ with $\|f\|_\infty\le 1$.
\end{definition}
The following three lemmas are the size estimates we require:
\begin{lemma}\label{size1lemma}
Let $E_1$ be a set of finite measure, let $f_3$ be in $X(E_3)$, and let $\vec{\mathbf{P}}$ be a finite collection of tri-tiles.  Then one has
\[
\sizee_1((\langle f_3,\phi_P^1\rangle)_{\vec{P}\in\vec{\mathbf{P}}})\lesssim \sup_{\vec{P}\in\vec{\mathbf{P}}}\frac{\int_{E_3}\tilde{\chi}_{I_{\vec{P}}}^M}{|I_{\vec{P}}|},
\]
for all $M>0$, with the implicit constant depending on $M$.
\end{lemma}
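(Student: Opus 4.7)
The plan is to reduce the claim to a Bessel-type inequality for wave packets with essentially disjoint Fourier supports. Fix a tree $T\subset\vec{\mathbf{P}}$ that is $1$-lacunary, i.e.\ a $k$-tree for some $k\in\{2,3\}$, with top $\vec{P}_T$. By a harmless enlargement of $\vec{\mathbf{P}}$ we may assume $\vec{P}_T\in\vec{\mathbf{P}}$, so it suffices to prove the tree-level bound
\[
\frac{1}{|I_T|}\sum_{\vec{P}\in T}|\langle f_3,\phi_P^1\rangle|^2 \;\lesssim\; \frac{1}{|I_T|}\int_{E_3}\tilde{\chi}_{I_T}^M.
\]
Taking square roots and the supremum over all such $T$ then yields the stated bound on $\sizee_1$, since the RHS is majorized by the supremum over $\vec{P}_T\in\vec{\mathbf{P}}$.

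The key observation is that being a $k$-tree with $k\neq 1$ forces, via the rank $1$ condition (Definition~\ref{rank1def}, item (3)), the intervals $\{\omega_{P_1}\}_{\vec{P}\in T}$ to have bounded overlap: every $\vec{P}\in T$ has $P_k\le P_{T,k}$, so the $\omega_{P_k}$ all contain the common interval $3\omega_{T,k}$, and rank~$1$ then forces $P_1\lesssim' P_{T,1}$, pulling the $\omega_{P_1}$ apart. Since each $\phi_P^1$ has Fourier support in $\tfrac{9}{10}\omega_{P_1}$, the family $\{\phi_P^1\}_{\vec{P}\in T}$ is essentially pairwise Fourier-disjoint. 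This produces the weighted Bessel inequality
\[
\sum_{\vec{P}\in T}|\langle f,\phi_P^1\rangle|^2 \;\lesssim\; \int_{\mathbb{R}}|f(x)|^2 \,\tilde{\chi}_{I_T}^M(x)\,dx,
\]
valid for any $f\in L^2$. The standard route is to decompose $f$ dyadically by distance from $I_T$, apply the classical Bessel inequality on each annular piece (using Fourier-disjointness), and exploit the rapid spatial decay of the $\phi_P^1$ (which holds because $I_P\subseteq I_T$) to sum the resulting geometric series over annuli. Applying this with $f=f_3$ and using $|f_3|\le \mathbf{1}_{E_3}$ (which follows from $f_3\in X(E_3)$) completes the reduction.

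The main technical obstacle is the weighted Bessel inequality itself, where one has to carefully bookkeep the polynomial decay exponents: the annular summation and a $TT^*$-type pairing each consume some of the decay of the wave packets, so the exponent $M$ on the right-hand side is strictly smaller than the adaptedness exponent of the $\phi_P^1$'s. Since the adaptedness exponent can be taken as large as desired, however, $M$ in the conclusion can likewise be taken arbitrarily large, which is exactly what the statement of the lemma requires. Once the weighted Bessel inequality is established, the passage through the majorization $|f_3|\le \mathbf{1}_{E_3}$ and the division by $|I_T|$ is immediate.
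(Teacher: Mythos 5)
The paper itself disposes of this lemma by citing \cite[Lemma 6.8]{MTTBiest2}, so your write-up has to be judged against the standard proof of that result, and there is a genuine gap at the final step. Your tree-level estimate, obtained from the weighted Bessel inequality together with $|f_3|^2\le 1_{E_3}$, reads
\[
\frac{1}{|I_T|}\sum_{\vec{P}\in T}|\langle f_3,\phi_P^1\rangle|^2\;\lesssim\;\frac{1}{|I_T|}\int_{E_3}\tilde{\chi}_{I_T}^M,
\]
but the size is the \emph{square root} of the left-hand side. Taking square roots therefore yields $\size_1\lesssim\sup_{\vec{P}}\left(|I_{\vec{P}}|^{-1}\int_{E_3}\tilde{\chi}_{I_{\vec{P}}}^M\right)^{1/2}$, i.e.\ the density to the power $1/2$, not the first power claimed in the lemma. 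Since $|I_T|^{-1}\int_{E_3}\tilde{\chi}_{I_T}^M$ is always $O_M(1)$ and is typically much smaller than $1$, the square-root bound is strictly weaker, and the loss is not cosmetic: the interpolation argument in Section 6 uses $\size_1\lesssim 2^l|E_3|/|E_4|$ off the exceptional set, and replacing $|E_3|/|E_4|$ by its square root would degrade the numerology and shrink the final range of exponents.

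The missing ingredient is precisely the John--Nirenberg proposition recorded just before this lemma: the $L^2$-based size (the $BMO(2)$ quantity) is comparable to its $BMO(p)$ variant for $p<2$, so it suffices to control $|I_T|^{-1}\bigl\|\bigl(\sum_{\vec{P}\in T}|I_P|^{-1}|\langle f_3,\phi_P^1\rangle|^2\chi_{I_P}\bigr)^{1/2}\bigr\|_{L^{1}(I_T)}$ (or its weak-$L^1$ analogue), modulo tail terms. For a $1$-lacunary tree this square function is a classical Littlewood--Paley object, bounded from $L^1$ to $L^{1,\infty}$, applied to $f_3$ localized near $I_T$; combining this with $\|f_3\,\tilde{\chi}_{I_T}^{M'}\|_1\le\int_{E_3}\tilde{\chi}_{I_T}^{M'}$ produces the first-power bound. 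Your observations about the rank-1 condition forcing essential Fourier disjointness of the $\phi_P^1$ along a $1$-lacunary tree, and the dyadic spatial decomposition, are correct and still needed --- but as inputs to this $L^1$/weak-$L^1$ square-function estimate, not to an $L^2$ Bessel inequality, which by itself cannot see past the square root.
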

\begin{proof}
See Lemma 6.8 in \cite{MTTBiest2}.
\end{proof}
\begin{lemma}
Let $E_1,E_2$ be sets of finite measure, let $f_1,f_2$ be in $X(E_1)$ and $X(E_2)$, respectively, and let $\vec{\mathbf{P}}$ be a finite collection of tri-tiles.  Let
\[
(a_{P_2})_{\vec{P}\in\vec{\mathbf{P}}}:=\left(\sum_{Q\in\vec{\mathbf{Q}}:\omega_{Q_3}\subset \omega_{\tilde{P}_2}}\frac{1}{|I_Q|^{1/2}}\langle f_1, \phi_{Q}^1\rangle\langle f_2, \phi_{Q}^2\rangle\phi_{Q}^3,\phi_P^2\right)_{\vec{P}\in\vec{\mathbf{P}}}.
\]
Then one has
\[
\sizee_2\left((a_{P_2})_{\vec{P}\in\vec{\mathbf{P}}}\right)\lesssim \sup_{\vec{P}\in\vec{\mathbf{P}}}\left(\frac{\int_{E_1}\tilde{\chi}_{I_{\vec{P}}}^M}{|I_{\vec{P}}|}\right)^{\theta}\left(\frac{\int_{E_2}\tilde{\chi}_{I_{\vec{P}}}^M}{|I_{\vec{P}}|}\right)^{1-\theta},
\]
for any $0<\theta<1$ and all $M>0$, with the implicit constant depending on $\theta,M$.
\end{lemma}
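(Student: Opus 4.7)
The plan is to reduce the $\sizee_2$ bound to a size/energy estimate for the discrete bilinear Hilbert transform model applied to $(f_1, f_2, g_T)$, for a suitable testing function $g_T$ built from the tree. First fix a $2$-lacunary tree $T \subset \vec{\mathbf{P}}$ (i.e., a $1$-tree or a $3$-tree) with top $\vec{P}_T$. By $\ell^2$-duality, it suffices to show, uniformly over sequences $(c_P)_{P \in T}$ with $\sum_{P\in T}|c_P|^2 \le 1$, that
\[
\frac{1}{|I_T|^{1/2}}\left|\sum_{P \in T} c_P \langle B_P(f_1,f_2), \phi_P^2\rangle\right| \lesssim \sup_{\vec{P}\in\vec{\mathbf{P}}}\left(\frac{\int_{E_1}\tilde{\chi}_{I_{\vec{P}}}^M}{|I_{\vec{P}}|}\right)^{\theta}\left(\frac{\int_{E_2}\tilde{\chi}_{I_{\vec{P}}}^M}{|I_{\vec{P}}|}\right)^{1-\theta}.
\]

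Substituting the definition of $B_P$ and interchanging sums, set $g_T := \sum_{P \in T} c_P \phi_P^2$. Because $\phi_P^2$ is Fourier-supported in $\frac{9}{10}\omega_{P_2}$ and $\phi_Q^3$ in $\frac{9}{10}\omega_{Q_3}$, and because of the rank-$1$ structure of the two collections, the constraint $\omega_{Q_3} \subset \omega_{\tilde{P}_2}$ is essentially forced by non-vanishing of the pairing $\langle \phi_Q^3, \phi_P^2\rangle$ (up to an $O(1)$ redistribution of tiles). Thus the expression collapses to
\[
\left|\sum_{Q \in \vec{\mathbf{Q}}} \frac{1}{|I_Q|^{1/2}} \langle f_1, \phi_Q^1\rangle \langle f_2, \phi_Q^2\rangle \langle g_T, \phi_Q^3\rangle\right|,
\]
which is a standard discrete BHT form applied to the triple $(f_1, f_2, g_T)$. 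Since $T$ is $2$-lacunary, the intervals $\omega_{P_2}$ are lacunary in scale, so a Bessel/Littlewood--Paley argument analogous to the one used for Lemma \ref{size1lemma} yields $\|g_T\|_2 \lesssim 1$ together with a spatial localization $\int|g_T|^2 \tilde{\chi}_{I_T}^{-M}\,dx \lesssim 1$ for arbitrarily large $M$.

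Now apply the known size/energy machinery for the BHT model (in the spirit of \cite{MTTBiest2} and \cite{camilbook}): for every admissible H\"older triple $(p_1,p_2,p_3)$ with $1<p_1,p_2<\infty$ and $1/p_1+1/p_2+1/p_3 = 1$ in the BHT range, one obtains
\[
\left|\sum_{Q} \frac{\langle f_1, \phi_Q^1\rangle \langle f_2, \phi_Q^2\rangle \langle g_T, \phi_Q^3\rangle}{|I_Q|^{1/2}}\right| \lesssim \|f_1\tilde{\chi}_{I_T}^{M/p_1}\|_{p_1}\|f_2\tilde{\chi}_{I_T}^{M/p_2}\|_{p_2}\|g_T\|_{p_3},
\]
after which $\|g_T\|_{p_3} \lesssim |I_T|^{1/p_3 - 1/2}$ by combining the $L^2$ bound with the spatial localization of $g_T$. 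Since $f_i \in X(E_i)$, one has $\|f_i\tilde{\chi}_{I_T}^{M/p_i}\|_{p_i}^{p_i} \le \int_{E_i}\tilde{\chi}_{I_T}^M$, yielding
\[
\frac{1}{|I_T|^{1/2}}\left|\sum_{P \in T} c_P \langle B_P(f_1,f_2), \phi_P^2\rangle\right| \lesssim \left(\frac{\int_{E_1}\tilde{\chi}_{I_T}^M}{|I_T|}\right)^{1/p_1}\left(\frac{\int_{E_2}\tilde{\chi}_{I_T}^M}{|I_T|}\right)^{1/p_2}.
\]

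Choosing two extremal pairs $(p_1,p_2)$ near $(\infty,2)$ and $(2,\infty)$ respectively gives bounds depending only on $|E_2|/|I_T|$ or only on $|E_1|/|I_T|$, and geometrically interpolating with weights $\theta$ and $1-\theta$ produces exactly the claimed sup. The main obstacle I anticipate is the clean justification of Step~4: one must carry out a localized BHT size-energy estimate against the weight $\tilde{\chi}_{I_T}^M$, handling the Schwartz tails contributed by wave packets with $I_Q \not\subset I_T$. This is technically analogous to the localization arguments carried out in \cite[Section 6]{MTTBiest2}, but one must check that the $\theta$-interpolation between exponents is respected after the localization, which requires keeping the implicit constants in the BHT bound uniform along the interpolation segment.
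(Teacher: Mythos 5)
Your overall architecture (fix a $2$-lacunary tree, reduce to a localized discrete bilinear-Hilbert-transform estimate over the tiles $Q$ sitting under the tree) is the right one, and it is essentially what the cited proof does — the paper itself simply invokes Lemma 9.1 of \cite{MTTBiest2} here. However, your quantitative execution has a genuine gap: the $\ell^2$-duality followed by a three-way H\"older with the packet $g_T=\sum_{P\in T}c_P\phi_P^2$ can never produce exponents summing to $1$. With $1/p_1+1/p_2+1/p_3=1$ and the bound $\|g_T\|_{p_3}\lesssim |I_T|^{1/p_3-1/2}$ (which is only available for $p_3\le 2$, since for $p_3>2$ the $L^{p_3}$ norm of a lacunary packet sum is governed by the smallest tile, not by $|I_T|$), your intermediate estimate carries total homogeneity $1/p_1+1/p_2=1-1/p_3\le 1/2$ in the pair $(E_1,E_2)$. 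Your two "extremal" endpoints therefore give exponent pairs $(0,1/2)$ and $(1/2,0)$, and geometric interpolation yields
\[
\left(\frac{\int_{E_1}\tilde{\chi}_{I_T}^M}{|I_T|}\right)^{\theta/2}\left(\frac{\int_{E_2}\tilde{\chi}_{I_T}^M}{|I_T|}\right)^{(1-\theta)/2},
\]
i.e.\ the \emph{square root} of the claimed bound, not the bound itself. Since each local average is $\lesssim 1$, this is strictly weaker, and the loss is not cosmetic: the downstream numerology (the factor $\left(|E_1|^{\theta}|E_2|^{1-\theta}\right)^{(1-a_2)/2}$ in the main interpolation argument and the choices $a_2=2(\alpha_1+\alpha_2)-1$, $\theta=\alpha_1/(\alpha_1+\alpha_2)$) depends on the size estimate having total degree $1$ in $(E_1,E_2)$.

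The missing idea is to avoid $\ell^2$-duality altogether and instead use the John--Nirenberg equivalence recorded as the $BMO(r)$ proposition in Section 5: the $L^2$-based size of $(a_{P_2})_{\vec{P}\in T}$ is comparable to $|I_T|^{-1/s}\bigl\|\bigl(\sum_{P\in T}|a_{P_2}|^2|I_P|^{-1}\chi_{I_P}\bigr)^{1/2}\bigr\|_{L^s}$ for $s\le 1$. After this reduction one estimates an $L^1$ (or $L^s$, $s\le 1$) average of the localized model $B_{I_T}(f_1,f_2)$ directly, using the $L^{p_1}\times L^{p_2}\to L^1$ bounds for the discrete BHT with \emph{conjugate} exponents $p_1=1/\theta$, $p_2=1/(1-\theta)$; it is precisely the passage from the $L^2$ average to the $L^1$ average that lets the exponents sum to $1$. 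Your localization concerns (Schwartz tails of packets with $I_Q\not\subset I_T$, uniformity of constants along the interpolation segment) are legitimate but secondary; they are handled as in \cite[Section 6]{MTTBiest2} once the John--Nirenberg step is in place.
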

\begin{proof}
See Lemma 9.1 in \cite{MTTBiest2}.
\end{proof}
Finally, we state Theorem 6.2 from \cite{DTT}, modulo some trivial changes of notation.  This statement is of the same flavor as the previous two, modulo a technical limitation which seemingly can be done away with, but we leave it in for ease of use:
\begin{lemma}
Let $E_4$ be a set of finite measure, let $f_4$ be in $X(E_4)$, and let $\vec{\mathbf{P}}$ be a finite collection of tri-tiles.  We define
\[
\mathcal{I}_{\vec{\mathbf{P}}}:=\{I\textrm{ dyadic }:I_{\vec{P}}\subseteq I\subseteq I_{\vec{P}'}\textrm{ for some }\vec{P},\vec{P}'\in\vec{\mathbf{P}}\}
\]
Then one has
\[
\sizee_3((\langle f_4,\tilde{\phi}_P^3\rangle)_{\vec{P}\in\vec{\mathbf{P}}})\lesssim \sup_{I\in\mathcal{I}_{\vec{\mathbf{P}}}}\frac{\int_{E_4}\tilde{\chi}_{I}^M}{|I|},
\]
for all $M>0$, with the implicit constant depending on $M$.
\end{lemma}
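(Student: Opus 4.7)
The stated inequality is \cite[Theorem 6.2]{DTT} in disguise, with $f_4$ in place of their maximal input and $N_2$ in place of their stopping time. My plan is to follow their proof essentially verbatim; I outline the main conceptual beats below.

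Unpacking $\sizee_3$, it suffices to show that for every $3$-lacunary tree $T \subseteq \vec{\mathbf{P}}$ with top $\vec{P}_T$,
\[
\left(\frac{1}{|I_T|}\sum_{\vec{P}\in T}|\langle f_4, \tilde{\phi}_P^3\rangle|^2\right)^{1/2} \lesssim \sup_{I\in\mathcal{I}_{\vec{\mathbf{P}}}}\frac{\int_{E_4}\tilde{\chi}_I^M}{|I|}.
\]
I would first dualize on $\ell^2$, writing the left-hand side as $|I_T|^{-1/2}\sup_{\|\epsilon\|_{\ell^2}=1}|\langle f_4,\Phi\rangle|$, with $\Phi(x):=\sum_{\vec{P}\in T}\epsilon_P\phi_P^3(x)\,1_{|I_P|\ge 2^{N_2(x)}}$. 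Then I would stratify the tree by scale, $T = \bigsqcup_k T^{(k)}$ with $T^{(k)} := \{\vec{P}\in T : |I_P|=2^k\}$. The key observation is that for $\vec{P}\in T^{(k)}$ the indicator collapses to $1_{E_k}(x)$, with $E_k := \{x : N_2(x) \le k\}$; the cutoff becomes $P$-independent within a fixed scale, which is what permits one to apply standard orthogonality scale-by-scale.

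The core of the argument is then an almost-orthogonality estimate for $\{\phi_P^3\}_{\vec{P}\in T}$. Because $T$ is $3$-lacunary and $\vec{\mathbf{P}}$ has rank $1$, the frequency intervals $\omega_{P_3}$ at distinct scales are lacunarly disjoint, and within a fixed scale the $I_P$'s are disjoint dyadic intervals. Combining a Bessel bound at each scale (of the type used in \cite[Lemma 6.8]{MTTBiest2}) with the frequency-orthogonality between the scale-projections, and invoking the adaptation bound $|\phi_P^3|\lesssim|I_P|^{-1/2}\tilde{\chi}_{I_P}^M$, one converts the $\ell^2$-duality statement into a pointwise bound on $\Phi(x)$ in terms of $\|\epsilon\|_{\ell^2}$ and a localized maximal quantity adapted to the scale $2^{N_2(x)}$. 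Writing $f_4 = \sum_n f_4 \cdot 1_{\{N_2=n\}}$ and telescoping the stopping-time indicator across scales in a way that respects frequency-disjointness is how one avoids accruing a logarithmic loss in the number of scales appearing in $T$.

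The main obstacle, and the only substantive novelty compared to the standard size estimate (Lemma \ref{size1lemma}), is handling the stopping-time cutoff $1_{|I_P|\ge 2^{N_2(x)}}$. It is precisely this cutoff that forces the supremum to range over all $I\in\mathcal{I}_{\vec{\mathbf{P}}}$ rather than only $I = I_T$: when $N_2(x)$ is small, the effective dyadic scale at which the density of $E_4$ is tested by a given tile can be any scale intermediate between the finest and coarsest appearing in $\vec{\mathbf{P}}$. Arranging the scale-varying estimate so that the $N_2$-dependence is absorbed by a single supremum over $\mathcal{I}_{\vec{\mathbf{P}}}$, rather than by a sum over scales that would lose a logarithmic factor, is the delicate step of the \cite{DTT} argument, and is the reason for phrasing the right-hand side as a maximal density over $\mathcal{I}_{\vec{\mathbf{P}}}$ in the first place.
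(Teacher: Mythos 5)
Your plan is to reproduce the proof of \cite[Theorem 6.2]{DTT} ``essentially verbatim,'' but this is precisely what the paper declines to do: immediately after stating this lemma, the paper remarks that it believes the proof of this statement in \cite{DTT} contains a non-trivial error, and it therefore does not prove the lemma at all. Instead it proves a substitute with a weaker right-hand side, namely $\sup_{T}|I_T|^{-1/q}\|f_4\tilde{\chi}_{I_T}^M\|_q$ for an arbitrary $q>1$ (an $L^q$ rather than $L^1$ average of the density of $E_4$), and the subsequent remark explains that the loss from $q>1$ is harmless in the interpolation scheme because $q$ may be taken arbitrarily close to $1$. So the object you are proposing to prove is stronger than what the paper actually establishes or uses.

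The concrete gap in your sketch sits exactly where the paper suspects the error in \cite{DTT} lies: the cross-scale almost-orthogonality in the presence of the stopping-time cutoff. Your observation that within a fixed scale $|I_P|=2^k$ the indicator collapses to the $P$-independent set $E_k=\{N_2\le k\}$ is correct and gives a clean Bessel bound scale by scale. But to sum the scale blocks in $\ell^2$ you need the functions $\phi_P^3\,1_{E_k}$ at different scales to be almost orthogonal, and the orthogonality of a lacunary tree lives entirely in frequency: it comes from the nesting/disjointness of the $\omega_{P_3}$. Multiplication by the rough, merely measurable set $1_{E_k}$ (or by the telescoped differences $1_{E_{k}}\setminus 1_{E_{k-1}}$) destroys the Fourier support of $\phi_P^3$, so there is no frequency-disjointness left to ``respect,'' and a crude triangle or Cauchy--Schwarz inequality over scales costs a factor of (number of scales)$^{1/2}$ --- exactly the logarithmic loss the sharp statement forbids. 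Your proposal asserts that telescoping avoids this loss but supplies no mechanism. The paper's replacement argument avoids the issue entirely by a different route: dualize against an $\ell^2$ sequence, dominate $\sum a_Q\tilde{\phi}_Q$ pointwise by the Hardy--Littlewood maximal function of $\sum a_Q\phi_Q$ (which removes the cutoff before any orthogonality is invoked), apply H\"older with exponents $q,p$ conjugate and $q>1$, then dualize again and control the two resulting square functions by a Littlewood--Paley bound and the discrete John--Nirenberg inequality for the $BMO(p)$ norm of $(a_Q)$. If you want the $L^1$ ($q=1$) endpoint as stated, you would need to either locate and repair the error in \cite{DTT} or supply a genuinely new argument; neither is done in your proposal.
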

\begin{remark}
We believe the proof in \cite{DTT} of the above lemma, unfortunately, has a non-trivial error.  We believe this error may be repairable, but rather than merely citing their result, we provide the reader with an alternative lemma which, in any case, can be used as a replacement in both the present work and in \cite{DTT}.
\end{remark}
\begin{lemma}
Let $E_4$ be a set of finite measure, let $f_4$ be in $X(E_4)$, and let $\vec{\mathbf{P}}$ be a finite collection of tri-tiles.  Then for every $q\in(1,\infty)$, one has
\[
\sizee_3((\langle f_4,\tilde{\phi}_P^3\rangle)_{\vec{P}\in\vec{\mathbf{P}}})\lesssim \sup_{\mathbf{T}\subset\mathbf{\vec{P}}}\frac{1}{|I_T|^{1/q}}\|f\tilde{\chi}_{I_T}^M\|_q\lesssim \sup_{\{I:I=I_{{\vec{P}}}, \vec{P}\in\mathbf{\vec{P}}\}}\frac{\int_{E_4}\tilde{\chi}_{I}^{qM}}{|I|^q},
\]
for all $M>0$, with the implicit constant depending on $M$.
\end{lemma}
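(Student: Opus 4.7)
The second, easier inequality is immediate: $f_4 \in X(E_4)$ gives $|f_4| \le \mathbf{1}_{E_4}$, so $\|f_4 \tilde{\chi}_{I_T}^M\|_q^q \le \int_{E_4}\tilde{\chi}_{I_T}^{qM}$, and the supremum over lacunary trees $T$ reduces to one over their top intervals, each of which lies in the set $\{I = I_{\vec{P}} : \vec{P}\in\vec{\mathbf{P}}\}$. (I note a likely cosmetic misprint of $|I|^q$ for $|I|$ in the stated denominator, since the middle expression is of order $\bigl(\int_{E_4}\tilde{\chi}_{I_T}^{qM}/|I_T|\bigr)^{1/q}$; this will not affect the argument.) The content of the lemma is the first inequality, which by definition of $\sizee_3$ reduces to the single-tree estimate: for each fixed $1$- or $2$-lacunary tree $T \subset \vec{\mathbf{P}}$ with top interval $I_T$, show
\[
\Bigl(\sum_{\vec{P}\in T}|\langle f_4, \tilde{\phi}_P^3\rangle|^2\Bigr)^{1/2} \lesssim |I_T|^{1/2-1/q}\,\|f_4 \tilde{\chi}_{I_T}^M\|_q.
\]

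My strategy is to exploit the John--Nirenberg equivalence of $BMO(r)$-norms from the proposition above. The left-hand side, divided by $|I_T|^{1/2}$, is exactly the $BMO(2)$-norm (over the dyadic collection $\{I_P\}_{\vec{P}\in T}$) of the sequence $(\langle f_4, \tilde{\phi}_P^3\rangle)_{\vec{P}\in T}$; John--Nirenberg makes this comparable to the $BMO(q)$-norm of the same sequence, so the target estimate is equivalent to the discrete Littlewood--Paley bound
\[
\Bigl\|\Bigl(\sum_{\vec{P}\in T}\frac{|\langle f_4, \tilde{\phi}_P^3\rangle|^2}{|I_P|}\chi_{I_P}(\cdot)\Bigr)^{1/2}\Bigr\|_{L^q} \lesssim \|f_4 \tilde{\chi}_{I_T}^M\|_{L^q}.
\]
For $q=2$, this reduces via expansion and a Schur-test argument on the cross terms to the standard almost-orthogonality of the wave packets $\{\phi_P^3\}_{\vec{P}\in T}$ along a lacunary tree, which is in the Muscalu--Tao--Thiele toolkit and appears in the proof of Lemma 6.8 of \cite{MTTBiest2}. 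For general $q \in (1, \infty)$, I would extend the $L^2$ bound to $L^q$ via vector-valued Calder\'on--Zygmund theory, using that the associated square-function kernel obeys the usual size and H\"ormander-type smoothness conditions afforded by the envelope $|\phi_P^3| \lesssim |I_P|^{-1/2}\tilde{\chi}_{I_P}^M$.

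The main obstacle, and precisely the point at which the argument in \cite{DTT} is believed to fail, is the scale-dependent cutoff $\mathbf{1}_{|I_P| \ge 2^{N_2(x)}}$ inside $\tilde{\phi}_P^3$, which breaks the clean Fourier localization of the wave packets. I would handle this via the elementary identity
\[
\langle f_4, \tilde{\phi}_P^3\rangle = \bigl\langle f_4 \cdot \mathbf{1}_{\{N_2(\cdot) \le \log_2 |I_P|\}},\ \phi_P^3\bigr\rangle,
\]
which transfers the cutoff onto $f_4$ and restores honest wave packets, at the cost of a scale-dependent effective input. Writing $T = \bigsqcup_k T_k$ by the scale $|I_P| = 2^k$ and using the monotonicity of $k \mapsto \mathbf{1}_{\{N_2 \le k\}}$, this scale dependence can be absorbed via a Carleson-type maximal partial-sum inequality for the associated wave-packet series, which in turn is dominated pointwise by the Hardy--Littlewood maximal function of $f_4 \tilde{\chi}_{I_T}^M$; the latter is bounded on $L^q$ for $q > 1$. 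Tail contributions from $|f_4|$ outside a mild dilate of $I_T$ are controlled in the standard way using the rapid decay of $\tilde{\chi}_{I_P}^M$, which makes the geometric tail summable and produces the weight $\tilde{\chi}_{I_T}^M$ on the right-hand side.
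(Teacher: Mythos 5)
Your reductions are right (including the observation that the displayed right-hand side should read $\bigl(\int_{E_4}\tilde{\chi}_I^{qM}/|I|\bigr)^{1/q}$ rather than $\int_{E_4}\tilde{\chi}_I^{qM}/|I|^q$; the paper's own proof ends with exactly the middle quantity $\tfrac{1}{|I_T|^{1/q}}\|f\|_{L^q(3I_T)}$), but your route through the single-tree estimate is genuinely different from the paper's, and the one step you leave vague is precisely the step the paper's arrangement is designed to make trivial. You apply John--Nirenberg to the output coefficients $(\langle f_4,\tilde{\phi}_P^3\rangle)$ and then need the $L^q$ bound for the \emph{truncated} discrete square function, i.e. for coefficients $\langle f_4\,\mathbf{1}_{\{N_2\le \log_2|I_P|\}},\phi_P^3\rangle$ whose input changes with the scale. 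The paper instead dualizes twice: first with an $\ell^2$ sequence $(a_Q)$, so that the truncation $\mathbf{1}_{|I_Q|\ge 2^{N_2(x)}}$ lands on the \emph{synthesis} operator $\sum_Q a_Q\tilde{\phi}_Q(x)$, which at each $x$ is a partial sum over scales of the full lacunary series and hence is pointwise dominated by $M\bigl(\sum_Q a_Q\phi_Q\bigr)(x)$ --- a clean, classical inequality; after H\"older in $p,q$ one dualizes again with $g\in L^{p'}$, splits into two square functions by Cauchy--Schwarz, bounds the $g$-factor by Littlewood--Paley, and applies John--Nirenberg to the \emph{dual} sequence $(a_Q)$ via $\|(a_Q)\|_{BMO(p)}\sim\|(a_Q)\|_{BMO(2)}\le |I_T|^{-1/2}$. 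In your framework the truncation sits inside the inner products of the square function, where "a Carleson-type maximal partial-sum inequality for the associated wave-packet series" does not directly apply: the maximal-function domination is a statement about partial sums of a synthesis expansion, not about analysis coefficients with scale-dependent inputs, and a vector-valued Calder\'on--Zygmund argument would likewise be obstructed by the fact that the cutoff destroys the kernel structure. To close this you would in effect have to dualize the square function, at which point you reproduce the paper's proof. So: the skeleton (John--Nirenberg plus a maximal-function treatment of the cutoff plus a Littlewood--Paley estimate) matches the paper's ingredients, but you should reorganize so that the truncation is transferred to the synthesis side before any square-function estimate is attempted; as written, the absorption of the cutoff is the gap.
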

\begin{proof}
We ignore the superfluous subscripts, i.e. let $f=f_4$ and suppose that $\mathbf{T}$ is any lacunary tree with top $I_T$ and let a typical tile in $\mathbf{T}$ be denoted by $Q$.  The quantity we wish to estimate is given by
\[
\frac{1}{|I_T|^{1/2}}\left(\sum_{Q\in\mathbf{T}}|\langle f,\tilde{\phi}_{Q}\rangle|^2\right)^{1/2}.
\]
We now linearize this expression by dualizing with an arbitrary sequence $(a_Q)_{Q\in\mathbf{T}}$ so that $\|(a_Q)\|_{\ell^2}\le 1$ to produce
\begin{align*}
\frac{1}{|I_T|^{1/2}}\left(\sum_{Q\in\mathbf{T}}|\langle f,\tilde{\phi}_{Q}\rangle|^2\right)^{1/2}&=\frac{1}{|I_T|^{1/2}}\left(\sum_{Q\in\mathbf{T}}a_Q\langle f,\tilde{\phi}_{Q}\rangle\right)\\
&=\frac{1}{|I_T|^{1/2}}\left\langle f,\sum_{Q\in\mathbf{T}}a_Q\tilde{\phi}_Q\right\rangle,
\end{align*}
where we have ignored a complex conjugation over the $a_Q$, which is completely harmless.  If we let $I_{T,n}$ denote the translation of $I_T$ by $n|I_T|$ units, so we may write $f=\sum_{n\in\Z}f\chi_{I_{T,n}}$.  For $|n|\ge 2$, we may perform crude estimates as in the beginning of the proof of Theorem 6.2 from \cite{DTT}, so we ignore these terms and assume that $f$ is supported in $3I_T$.  In such a case, the above expression is
\[
\left\langle f,\sum_{Q\in\mathbf{T}}a_Q\tilde{\phi}_Q\right\rangle=\int_{3I_T}f\sum_{Q\in\mathbf{T}}a_Q\tilde{\phi}_Qdx,
\]
where he have again ignored a complex conjugation in the inner product.  For any $p,q$ which are conjugate exponents, we invoke H\"{o}lder's inequality to get
\[
\left|\int_{3I_T}f\sum_{Q\in\mathbf{T}}a_Q\tilde{\phi}_Qdx\right|\le \left\|\sum_{Q\in\mathbf{T}}a_Q\tilde{\phi}_Q\right\|_{L^p(3I_T)}\|f\|_{L^q(I_T)}
\]  
Using standard estimates, as in \cite{DTT}, one has a pointwise estimate
\[
\left|\sum_{Q\in\mathbf{T}}a_Q\tilde{\phi}_Q\right|\le M\left(\sum_{Q\in\mathbf{T}}a_Q\phi_Q\right),
\]
where $M$ is the Hardy--Littlewood maximal operator.  Thus by classical theorems about $M$, it suffices to estimate
\[
\left\|\sum_{Q\in\mathbf{T}}a_Q\phi_Q\right\|_{L^p(3I_T)}.
\]
Now dualize with a function $g$ with $\|g\|_{L^{p'}(3I_T)}\le 1$.  Then the above can be majorized (again ignoring complex conjugation) by
\begin{align*}
\left|\int_{3I_T}\sum_{Q\in\mathbf{T}}a_Q\phi_Q gdx\right|&=\left|\sum_{Q\in\mathbf{T}}a_Q\langle g,\phi_Q \rangle\right|=\left|\sum_{Q\in\mathbf{T}}\int_{I_Q}\frac{a_Q}{|I_Q|^{1/2}}\frac{\langle g,\phi_Q\rangle}{|I_Q|^{1/2}}\chi_{I_Q}dx\right|\\
&=\left|\int_{3I_T}\sum_{Q\in\mathbf{T}}\frac{a_Q}{|I_Q|^{1/2}}\frac{\langle g,\phi_Q\rangle}{|I_Q|^{1/2}}\chi_{I_Q}dx\right|\\
&\le \int_{3I_T}\left(\sum_{Q\in\mathbf{T}}\frac{|a_Q|^2}{|I_Q|}\chi_{I_Q}\right)^{1/2}\left(\sum_{Q\in\mathbf{T}}\frac{|\langle g,\phi_Q\rangle|^2}{|I_Q|}\chi_{I_Q}\right)^{1/2}dx\\
&\le \left\|\left(\sum_{Q\in\mathbf{T}}\frac{|a_Q|^2}{|I_Q|}\chi_{I_Q}\right)^{1/2}\right\|_{L^p(3I_T)}\left\|\left(\sum_{Q\in\mathbf{T}}\frac{|\langle g,\phi_Q\rangle|^2}{|I_Q|}\chi_{I_Q}\right)^{1/2}\right\|_{L^{p'}(3I_T)}
\end{align*}
The second factor in this expression is essentially a Littlewood--Paley square function, owing to the fact that $\mathbf{T}$ is actually a lacunary tree; thus it is bounded by $\|g\|_{L^{p'}(3I_T)}\le 1$.  The second term is then controlled by $|I_T|^{1/p}\|(a_Q)\|_{BMO(p)}$, which, by the theorem stated earlier in this section, is comparable to $|I_T|^{1/p}\|(a_Q)\|_{BMO(2)}$, which is majorized by
\[
\frac{|I_T|^{1/p}}{|I_T|^{1/2}}\|(a_Q)\|_{\ell^2}\le\frac{1}{|I_T|^{1/2-1/p}}.
\]
Finally, this results in the estimate
\[
\frac{1}{|I_T|^{1/2}}\left(\sum_{Q\in\mathbf{T}}|\langle f,\tilde{\phi}_{Q}\rangle|^2\right)\lesssim\frac{1}{|I_T|^{1/2}}\|f\|_{L^q(3I_T)}\frac{1}{|I_T|^{1/2-1/p}}=\frac{1}{|I_T|^{1/q}}\|f\|_{L^q(3I_T)}.
\]
From this, the general estimate follows.
\end{proof}
\begin{remark}
The fact that this is an $L^q$ average rather than an $L^1$ average, i.e. the presence of the $q>1$ rather than $q=1$, is completely harmless --- in what follows, there is always small loss in the exponents with the caveat that it can be made arbitrarily small.  By taking $q$ to be very close to $1$, we can still make this loss arbitrarily small, and so the argument using this lemma in lieu of the lemma from \cite{DTT} is essentially unchanged.
\end{remark}
\subsection{Energies}
We define the energies in this case as follows.  The $1$- and $2$-energies are modified somewhat from the ``standard'' energies.
\begin{definition}
If $t=1$ or $t=2$
\[
\energye_t ((a_{P_t})_{\vec{P}\in\vec{\mathbf{P}}}):=\sup_{n\in\mathbb{Z}}\sup_{\mathcal{F}}2^n\left(\sum_{T\in\mathcal{F}}|I_T|\right)^{1/2},
\]
where the second supremum ranges over all forests $\mathcal{F}$ consisting of strongly $t$-disjoint $t$-lacunary trees in $\vec{\mathbf{P}}$ such that
\[
\left(\sum_{\vec{P}\in T}|a_{P_t}|^2\right)^{1/2}\ge 2^n|I_T|^{1/2}
\]
for all $T\in\mathcal{F}$ and
\[
\left(\sum_{\vec{P}\in T'}|a_{P_t}|^2\right)^{1/2}\le 2^{n+1}|I_{T'}|^{1/2}
\]
for all sub-trees $T'\subset T\in\mathcal{F}$.
\end{definition}
And here are the relevant estimates for the 1- and 2-energies:
\begin{lemma}
Let $f_3$ be a function in $X(E_3)$ and $\vec{\mathbf{P}}$ a finite collection of tri-tiles.  Then
\[
\energye_1((\langle f_3,\phi_P^1\rangle)_{\vec{P}\in\vec{\mathbf{P}}})\le |E_3|^{1/2}.
\]
\end{lemma}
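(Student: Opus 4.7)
The plan is to follow the classical strategy of Muscalu--Tao--Thiele in \cite{MTTBiest2}: reduce the energy bound to a Bessel-type almost-orthogonality inequality for the wave packets $\phi_P^1$ across strongly $1$-disjoint $1$-lacunary trees. Fixing $n \in \mathbb{Z}$ and a forest $\mathcal{F}$ of strongly $1$-disjoint $1$-lacunary trees satisfying the lower bound
\[
2^n |I_T|^{1/2} \le \left(\sum_{\vec{P} \in T} |\langle f_3, \phi_P^1\rangle|^2\right)^{1/2}, \qquad T \in \mathcal{F},
\]
I would square and sum over $T \in \mathcal{F}$ to obtain
\[
2^{2n} \sum_{T \in \mathcal{F}} |I_T| \le \sum_{T \in \mathcal{F}} \sum_{\vec{P} \in T} |\langle f_3, \phi_P^1\rangle|^2,
\]
so the whole estimate will follow provided the right-hand side is controlled by $\|f_3\|_2^2 \le |E_3|$ (the last bound using $f_3 \in X(E_3)$).

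The task then reduces to proving the Bessel-type bound
\[
\sum_{T \in \mathcal{F}} \sum_{\vec{P} \in T} |\langle f_3, \phi_P^1\rangle|^2 \lesssim \|f_3\|_2^2.
\]
I would dualize: linearize the square function against a sequence $(c_{\vec{P}})$ with $\sum_{\vec{P}} |c_{\vec{P}}|^2 \le 1$ and apply Cauchy--Schwarz, which reduces matters to the dual Bessel estimate
\[
\Bigl\|\sum_{\vec{P} \in \bigsqcup_T T} c_{\vec{P}} \phi_P^1\Bigr\|_2^2 \lesssim \sum_{\vec{P}} |c_{\vec{P}}|^2.
\]
Expanding the square and applying Plancherel, the diagonal terms are handled by $L^2$-normalization, and the off-diagonal terms split into two regimes: within a single $1$-lacunary tree, the rank $1$ condition forces the frequency intervals $\omega_{P_1}$ to be nested at distinct dyadic scales, so their $\tfrac{9}{10}$-expansions are essentially disjoint; across two different trees $T \ne T'$ in $\mathcal{F}$, strong $1$-disjointness guarantees that any $\vec{P}, \vec{P}'$ whose Fourier supports overlap must have $I_{\vec{P}'} \cap I_T = \emptyset$ (and symmetrically), so the rapid spatial decay of the wave packets makes the cross-terms summable.

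The main obstacle is bookkeeping the cross-tree contributions coming from pairs with $2\omega_{P_1} \cap 2\omega_{P'_1} \ne \emptyset$ but $I_{\vec{P}'} \cap I_T = \emptyset$; this is exactly the purpose of the strong $1$-disjointness hypothesis, and the Schur-test argument is essentially \cite[Proposition 6.12]{MTTBiest2}, which one may cite directly. Combining this Bessel inequality with the reduction above and taking square roots gives $2^n (\sum_T |I_T|)^{1/2} \le |E_3|^{1/2}$, and taking the supremum over $n$ and $\mathcal{F}$ concludes the proof.
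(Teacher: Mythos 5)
Your proposal is correct and takes essentially the same route as the paper, whose entire proof is a citation of Lemma 6.7 of \cite{MTTBiest2}; your sketch is precisely the standard Bessel-type almost-orthogonality argument underlying that cited result. The one detail worth keeping in mind is that the cross-tree contributions are ultimately absorbed using the upper size bound $2^{n+1}$ on subtrees built into the definition of energy (a small absorption step rather than a clean unconditional Bessel inequality), which is exactly the part you delegate to the citation.
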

\begin{proof}
See Lemma 6.7 from \cite{MTTBiest2}.
\end{proof}
\begin{lemma}
Suppose $E_1,E_2$ be sets of finite measure and $f_1,f_2$ functions with $f_1\in X(E_1)$ and $f_2\in X(E_2)$.  Let
\[
(a_{P_2})_{\vec{P}\in\vec{\mathbf{P}}}:=\left(\sum_{Q\in\vec{\mathbf{Q}}:\omega_{Q_3}\subset \omega_{\tilde{P}_2}}\frac{1}{|I_Q|^{1/2}}\langle f_1, \phi_{Q}^1\rangle\langle f_2, \phi_{Q}^2\rangle\phi_{Q}^3,\phi_P^2\right)_{\vec{P}\in\vec{\mathbf{P}}}.
\]
Then
\[
\energye_2\left((a_{P_3})_{\vec{P}\in\vec{\mathbf{P}}}\right)\lesssim \left(|E_1|^{1/2}\sup_{\vec{Q}\in\vec{\mathbf{Q}}}\frac{\int_{E_2}\tilde{\chi}^M_{I_{\vec{Q}}}}{|I_{\vec{Q}}|}\right)^{\theta}\left(|E_2|^{1/2}\sup_{\vec{Q}\in\vec{\mathbf{Q}}}\frac{\int_{E_1}\tilde{\chi}^M_{I_{\vec{Q}}}}{|I_{\vec{Q}}|}\right)^{1-\theta}.
\]
for any $0<\theta<1$
\end{lemma}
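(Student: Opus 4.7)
My plan is to mimic the proof of the analogous 2-energy estimate in the Biest paper of Muscalu--Tao--Thiele (Lemma 9.2 of \cite{MTTBiest2}), adapted to account for the extra inner sum defining $B_P$. By definition of $\energy_2$ and a standard dualization, it suffices to fix $n\in\mathbb{Z}$ and a forest $\mathcal{F}$ of strongly 2-disjoint 2-lacunary trees satisfying the relevant lower bound, and to bound the resulting quantity $2^n(\sum_{T\in\mathcal{F}}|I_T|)^{1/2}$. Using the tree lower bound together with duality against an $\ell^2$-normalized sequence $(\beta_{\vec{P}})_{\vec{P}\in\bigcup_T T}$, this task reduces to estimating
\[
\sum_{T\in\mathcal{F}}\sum_{\vec{P}\in T}\beta_{\vec{P}}\,\langle B_P(f_1,f_2),\phi_P^2\rangle
=\Bigl\langle f_1\otimes f_2,\;\sum_{T\in\mathcal{F}}\sum_{\vec{P}\in T}\beta_{\vec{P}}\!\!\sum_{\substack{\vec{Q}\in\vec{\mathbf{Q}}\\ \omega_{Q_3}\subset\omega_{\tilde{P}_2}}}\!\!\frac{\langle\phi_Q^3,\phi_P^2\rangle}{|I_Q|^{1/2}}\,\phi_Q^1\otimes\phi_Q^2\Bigr\rangle.
\]

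Second, I will prove each of the two endpoints and then interpolate. For the first endpoint ($\theta=1$), I would absorb the $f_2$-pairings using the size bound for $f_2$ (the analogue of Lemma~\ref{size1lemma} applied to the $Q$-indexed family), and then handle the remaining $f_1$-sum by the standard tile Bessel/energy inequality for the strongly disjoint $\vec{Q}$ produced by the frequency constraint $\omega_{Q_3}\subset\omega_{\tilde{P}_2}$ together with the 2-disjointness of the outer forest. This yields the bound $|E_1|^{1/2}\sup_{\vec{Q}}\frac{\int_{E_2}\tilde{\chi}^M_{I_{\vec{Q}}}}{|I_{\vec{Q}}|}$. Reversing the roles of $f_1$ and $f_2$ yields the second endpoint with the factors $|E_2|^{1/2}$ and $\frac{\int_{E_1}\tilde\chi^M_{I_{\vec{Q}}}}{|I_{\vec Q|}}$. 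Finally, a geometric interpolation of the two endpoints (the same trick used throughout \cite{MTTBiest2}, valid because both sides are homogeneous of the same degree in $f_1,f_2$) produces the asserted bound with the parameter $\theta\in(0,1)$.

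The main obstacle, as I see it, is establishing the Bessel-type almost-orthogonality for the $Q$-indexed wave packets as $\vec{P}$ ranges over a strongly 2-disjoint forest. The outer strong 2-disjointness guarantees that the $\omega_{P_2}$ intervals are essentially disjoint, and the rank-1 condition combined with $\omega_{Q_3}\subset\omega_{\tilde{P}_2}$ should force the $Q$-families belonging to different $\vec{P}$ to live in disjoint frequency regions, producing the desired almost-orthogonality on the frequency side. On the spatial side one uses the decay of $\tilde{\chi}_{I_{\vec{P}}}$ and the fact that the sub-tree upper bound in the definition of energy precludes excessive clumping inside a single tree. Once these orthogonality facts are carefully checked, the rest of the argument is routine size/energy bookkeeping in the spirit of \cite{MTTBiest2} and \cite{camilbook}.
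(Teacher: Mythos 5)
The paper itself offers no proof of this lemma beyond the citation ``See Lemma 9.2 in \cite{MTTBiest2}, modulo some obvious changes of notation,'' so reconstructing that argument is the right instinct, and your first reduction is correct: fix $n$ and a forest $\mathcal{F}$ of strongly $2$-disjoint lacunary trees, dualize the $\ell^2$ sums, and expand $B_P$ so that the problem becomes a trilinear tile form over the inner collection $\vec{\mathbf{Q}}$ paired against the ``virtual'' sequence $c_{Q_3}:=\bigl\langle \sum_{P:\omega_{Q_3}\subset\omega_{\tilde{P}_2}}\overline{a_{P_2}}\,\phi_P^2,\ \phi_Q^3\bigr\rangle$. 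That is exactly how the Biest argument begins.

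The gap is in your second step. You propose to prove the two endpoints ($\theta=1$: $\energy$ of $f_1$ times $\size$ of $f_2$; $\theta=0$: the reverse) and then interpolate trivially. The lemma is stated only for $0<\theta<1$, and this is not an accident of bookkeeping: the endpoints are not obtained in \cite{MTTBiest2}, and your sketch does not obtain them either. ``Absorbing the $f_2$-pairings using the size bound'' means replacing $|\langle f_2,\phi_Q^2\rangle|$ by $|I_Q|^{1/2}\,\size_2$ tile by tile; after doing so, all oscillation in the $Q$-sum is destroyed, and the remaining sum $\sum_Q |I_Q|^{-1/2}|\langle f_1,\phi_Q^1\rangle||c_{Q_3}|$ can only be handled by Cauchy--Schwarz, which requires $\sum_{Q\in\vec{\mathbf{Q}}}|\langle f_1,\phi_Q^1\rangle|^2\lesssim \|f_1\|_2^2$. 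That Bessel inequality is false for the full collection: for a fixed $\vec{P}$, the admissible $\vec{Q}$ range over all scales $i\gg j$, so the tiles $Q_1$ overlap massively across scales; the constraint $\omega_{Q_3}\subset\omega_{\tilde{P}_2}$ together with strong $2$-disjointness of the outer forest disjointifies the $\vec{Q}$'s only in the $Q_3$ slot, not in the $Q_1$ or $Q_2$ slots. This failure is the entire reason the tree/forest machinery exists. The route the citation intends is to feed the trilinear $\vec{\mathbf{Q}}$-form directly into the abstract size/energy interpolation proposition of \cite{MTTBiest2} (the bound by $\prod_j \size_j^{\theta_j}\energy_j^{1-\theta_j}$ with $\theta_1+\theta_2+\theta_3=1$, $0\le\theta_j<1$), using $\size_3((c_{Q_3}))$ and $\energy_3((c_{Q_3}))\lesssim 2^n(\sum_{T\in\mathcal{F}}|I_T|)^{1/2}$ furnished by the strong disjointness of $\mathcal{F}$, and then to solve for $2^n(\sum_T |I_T|)^{1/2}$. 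The exponent $\theta$ of the lemma is a function of the $\theta_j$, and the requirement that the stopping-time decompositions in both the $f_1$ and $f_2$ slots carry strictly positive weight is precisely what confines $\theta$ to the open interval. As written, your argument asserts a strictly stronger statement than the lemma and relies on an orthogonality that is not there.
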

\begin{proof}
See Lemma 9.2 in \cite{MTTBiest2}, modulo some obvious changes of notation.
\end{proof}
There is not exactly a 3-energy.  However, we have the following replacement which is of a similar flavor:
\begin{lemma}\label{dttenergy}
Let $\mu>0$.  Suppose that $\mathcal{F}$ is a forest of strongly $3$-disjoint, $3$-lacunary trees.  Suppose further that $f\in X(E)$ is such that
\[
\left(\sum_{\vec{P}\in T}|\langle f_4,\tilde{\phi}_P^3\rangle|^2\right)^{1/2}\ge 2^n|I_T|^{1/2}
\]
for all $T\in\mathcal{F}$ and
\[
\left(\sum_{\vec{P}\in T'}|\langle f_4,\tilde{\phi}_P^3\rangle|^2\right)^{1/2}\le 2^{n+1}|I_{T'}|^{1/2}
\]
for all sub-trees $T'\subseteq T\in\mathcal{F}$.  Then
\[
\left(\sum_{T\in\mathcal{F}}|I_T|\right)^{1/2}\lesssim |E_4|^{1/2}2^{-n}\left(2^{-n}|E_4|^{-1/2}\right)^{1/\mu},
\]
where the implicit constant depends on $\mu$.
\end{lemma}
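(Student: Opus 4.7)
The plan is to combine the standard $L^2$-based energy estimate with a sharper $L^q$ duality argument, and then interpolate between them to extract the extra decay factor $(2^{-n}|E_4|^{-1/2})^{1/\mu}$. First, I would apply a $TT^*$-type reduction: let $S := (\sum_{T \in \mathcal{F}} |I_T|)^{1/2}$. The hypothesis $(\sum_{\vec P \in T}|\langle f_4, \tilde\phi_P^3\rangle|^2)^{1/2} \geq 2^n |I_T|^{1/2}$ gives
\[
2^{2n} S^2 \;\leq\; \sum_{T \in \mathcal{F}} \sum_{\vec P \in T} |\langle f_4, \tilde\phi_P^3\rangle|^2.
\]
Dualizing the right-hand side with an $\ell^2$-normalized sequence $(a_{\vec P})$, it suffices to bound $|\langle f_4, g\rangle|$ where $g := \sum_{T \in \mathcal{F}} \sum_{\vec P \in T} a_{\vec P}\, \tilde\phi_P^3$.

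Next, I would establish two competing estimates for $\langle f_4, g\rangle$. The first is the standard bound: by Cauchy--Schwarz, $|\langle f_4, g\rangle| \leq \|f_4\|_2 \|g\|_2 \leq |E_4|^{1/2}\|g\|_2$. Because the trees in $\mathcal{F}$ are strongly $3$-disjoint and $3$-lacunary, the wave packets $\tilde\phi_P^3$ exhibit near-orthogonality across different trees, and the sub-tree hypothesis controls the contributions within each tree. A standard Bessel-type argument (cf.\ Lemma 6.7 in \cite{MTTBiest2}) then yields $\|g\|_2 \lesssim 1$, which recovers the unimproved bound $S \lesssim |E_4|^{1/2} 2^{-n}$. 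The second, sharper estimate uses Hölder's inequality at a conjugate pair $(q, q')$ with $q$ large: $|\langle f_4, g\rangle| \leq \|f_4\|_q\|g\|_{q'} \leq |E_4|^{1/q}\|g\|_{q'}$, since $|f_4| \leq \mathbf{1}_{E_4}$. The function $g$ is pointwise dominated by a Hardy--Littlewood maximal operator applied to its $L^2$-analogue; together with the $\ell^2$-valued Littlewood--Paley square function theory available for $3$-lacunary trees, and the $BMO(q') \sim BMO(2)$ equivalence from the John--Nirenberg-type proposition stated earlier, we obtain $\|g\|_{q'} \lesssim S^{\,1 - 2/q'}$ (with implicit constant depending on $q$).

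Finally, combining the two yields
\[
2^{2n} S^2 \;\lesssim\; |E_4|^{1/q}\, S^{\,1 - 2/q'},
\]
which after rearrangement produces $S \lesssim 2^{-n(1 + 1/\mu)}\, |E_4|^{(1-1/\mu)/2}$ for an appropriate choice of $q = q(\mu)$, i.e.\ precisely the claimed estimate. The main obstacle I expect is the careful justification of the sharp $L^{q'}$ estimate on $g$ in the presence of the characteristic function $\mathbf{1}_{|I_P| \geq 2^{N_2(x)}}$: this cutoff depends measurably on $x$ and couples different scales within each tree, so standard square-function machinery cannot be applied verbatim. The workaround is to stratify the collection $\{\vec P : \vec P \in T,\, T \in \mathcal{F}\}$ by the dyadic value of $N_2$ on $I_P$ and use the strong $3$-disjointness to decouple the resulting pieces, paying only an $O(1)$ loss per stratum which can be absorbed into the arbitrariness of $\mu$. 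With this in place, the interpolation goes through cleanly.
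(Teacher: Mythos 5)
There is a genuine gap, and it sits exactly at the crux of the lemma. Your first ``competing estimate'' asserts that a standard Bessel/near-orthogonality argument (you cite Lemma 6.7 of \cite{MTTBiest2}) gives $\|g\|_2\lesssim 1$ for $g=\sum_{T}\sum_{\vec P\in T}a_{\vec P}\tilde\phi_P^3$, hence the clean bound $S\lesssim |E_4|^{1/2}2^{-n}$. But the functions here are the \emph{truncated} packets $\tilde\phi_P^3=\phi_P^3\,1_{|I_P|\ge 2^{N_2(x)}}$, and multiplication by the rough, $x$-dependent cutoff destroys the Fourier support property of $\phi_P^3$ on which all the near-orthogonality across strongly disjoint trees (and the Littlewood--Paley theory within a lacunary tree, which your second estimate also invokes) rests. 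If the clean Bessel inequality were available for the $\tilde\phi_P^3$, the lemma would follow immediately with no $1/\mu$ loss at all, and the entire difficulty of the maximal truncation would evaporate. This is precisely why the result is hard: the argument this paper points to (Lemma 9.2 of \cite{DTT}) does \emph{not} prove $\|g\|_2\lesssim 1$; it proves a Bessel inequality with a loss measured by the counting function $N_{\mathcal F}=\sum_{T}1_{I_T}$, namely $\sum_{\vec P}|\langle f_4,\tilde\phi_P^3\rangle|^2\lesssim \|N_{\mathcal F}\|_\infty^{1/\mu}\int|f_4|^2\chi_{I_0}^{10}$, established over roughly twenty pages via the Rademacher--Menshov theorem and a multi-frequency lemma of Bourgain, and then bootstraps that loss into the factor $\left(2^{-n}|E_4|^{-1/2}\right)^{1/\mu}$. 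Your proposal assumes the hard part.

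Your proposed repair --- stratifying the tiles by ``the dyadic value of $N_2$ on $I_P$'' and paying $O(1)$ per stratum --- does not close the gap: $N_2$ is an arbitrary integer-valued measurable function, so it takes different values at different points of $I_P$ (the stratum is not well defined), and even a well-defined stratification would have unboundedly many nonempty strata, so summing $O(1)$ losses is not affordable; the loss must be tied to a quantity like $\log\|N_{\mathcal F}\|_\infty$, which is what Rademacher--Menshov delivers. A secondary problem is the numerology of your final interpolation: using only the $L^q\times L^{q'}$ bound, the single relation $2^{2n}S^2\lesssim |E_4|^{1/q}S^{1-2/q'}$ forces $q'$ from the exponent of $2^{-n}$, after which the exponent of $|E_4|$ is determined and generically does not equal $(1-1/\mu)/2$; you would need to genuinely interpolate with the (unavailable) $L^2$ bound to fix both exponents simultaneously. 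So both legs of the interpolation rest on orthogonality that the truncation has removed.
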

\begin{proof}
See Lemma 9.2 in \cite{DTT}.  This is the primary lemma of Demeter, Tao, and Thiele's paper and requires roughly 20 pages of computations.  The main idea is the following.  Let 
\[
N_\mathcal{F}:=\sum_{T\in\mathcal{F}}1_{I_T},
\]
and suppose that $I_0$ is any interval which contains the support of $N_\mathcal{F}$.  With a lot of hard work and the help of a theorem of Rademacher--Menshov and a lemma of Bourgain, one can establish the following for any $\mu>0$:
\[
\sum_{\vec{P}\in\cup_{T\in\mathcal{F}}T} |\langle f_4,\phi_P^31_{|I_P|>2^{N(x)}}\rangle|^2\lesssim \|N_\mathcal{F}\|_\infty^{1/\mu}\int |f_4|^2\chi_{I_0}^{10}.
\]
More precisely, one shows that one loses at most a small power of the logarithm of $\|N_\mathcal{F}\|_\infty$.  The two hypotheses guarantee that our estimate is still preserved after restricting to subtrees, which, it turns out, is precisely enough to get the desired conclusion.
\end{proof}
The factor $(2^{-n})^{1/\mu}$ is essentially technical and can basically be ignored; however, its presence bars one from taking the desired supremum over $n$ in the definitions of 1- and 2-energies.  That said, we can use this lemma to establish the following:
\begin{lemma}\label{12partitionlemma}
Let $\vec{\mathbf{P}}$ be a finite collection of multitiles.  Let $\mu>0$.  Then after discarding tiles $\vec{P}$ such that $\langle f_4,\tilde{\phi}_P^3\rangle=0$, there exists a partition,
\[
\vec{\mathbf{P}}=\bigcup_{n:2^n\le \sizee_3((a_{P_3})_{\vec{P}\in\vec{\mathbf{P}}})}\bigcup_{T\in\mathcal{F}^{n,3}}T,
\]
where $\mathcal{F}^{n,3}$ is a collection of trees such that $\sizee_3(T)\le 2^{m+1}$ and
\[
\sum_{T\in\mathcal{F}^{n,3}}|I_T|\lesssim |E_4|2^{-2n}\left(2^{-n}|E_4|^{-1/2}\right)^{2/\mu}
\]
\end{lemma}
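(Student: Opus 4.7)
The plan is to perform the standard iterated tree-selection procedure, organizing the tri-tiles into $3$-lacunary trees sorted by $3$-energy level, and then to invoke Lemma \ref{dttenergy} at each level to obtain the claimed bound. After discarding the tiles with $\langle f_4,\tilde{\phi}_P^3\rangle = 0$, the remaining finite collection has only finitely many relevant values of $2^n$, so the algorithm will terminate.

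Starting from the largest $n$ with $2^n \le \sizee_3((a_{P_3})_{\vec{P} \in \vec{\mathbf{P}}})$ and descending, I run the following inner loop. Search for a $3$-lacunary tree $T$ in the current remaining collection satisfying
\[
\biggl(\sum_{\vec{P} \in T} |\langle f_4,\tilde{\phi}_P^3\rangle|^2\biggr)^{1/2} > 2^n |I_T|^{1/2}.
\]
Among all such $T$, select one whose top $\vec{P}_T$ has $\omega_{P_{T,3}}$ extremal in the appropriate ordering (maximum right endpoint for $1$-trees, minimum left endpoint for $2$-trees, cf.\ the tree-selection procedure in \cite[Chapter 2]{camilbook}); add $T$ to $\mathcal{F}^{n,3}$ and then remove from the collection every $\vec{P}$ satisfying $P_3 \le P_{T,3}$ (not merely the tiles of $T$). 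Repeat until no such $T$ exists in the remaining collection, and then decrement $n$.

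Three facts about the resulting forest $\mathcal{F}^{n,3}$ require verification. The lower bound $\ge 2^n |I_T|^{1/2}$ is built into the selection. The sub-tree upper bound $\le 2^{n+1}|I_{T'}|^{1/2}$ follows because any sub-tree $T' \subseteq T$ that violated it would itself have qualified as a valid selection at the previous level $n+1$; removing the \emph{maximal} extension $\{\vec{P} : P_3 \le P_{T,3}\}$ at each step guarantees that no such $T'$ was left lying in the collection when the level-$(n+1)$ loop terminated. Strong $3$-disjointness for the forest follows from the extremal choice of $\omega_{P_{T,3}}$ together with the maximal-extension rule, by the by-now-classical argument used, e.g., in \cite{MTTBiest2}.

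Having constructed $\mathcal{F}^{n,3}$ with these three properties, a direct application of Lemma \ref{dttenergy} yields
\[
\sum_{T \in \mathcal{F}^{n,3}} |I_T| \lesssim |E_4| \cdot 2^{-2n}\left(2^{-n}|E_4|^{-1/2}\right)^{2/\mu},
\]
and taking the union over all relevant $n$ exhausts $\vec{\mathbf{P}}$. The main obstacle is propagating the sub-tree upper bound cleanly from level to level: in contrast to the $1$- and $2$-energy cases, where the analogous lemma permits a genuine supremum over $n$, here the extra factor $(2^{-n})^{1/\mu}$ in Lemma \ref{dttenergy} forces us to keep track of individual dyadic levels. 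Removing the maximal extension (rather than just the selected tree $T$) at every step is precisely what allows the sub-tree bound to be inherited across the iteration without loss.
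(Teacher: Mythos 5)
Your argument is correct and is exactly the standard stopping-time tree-selection proof underlying the result the paper merely cites (Corollary 6.4 of \cite{DTT}): greedy extraction of $3$-lacunary trees level by level with extremal tops for strong $3$-disjointness, the sub-tree upper bound inherited from the termination of the level-$(n+1)$ loop, and the counting bound supplied by Lemma \ref{dttenergy}. One bookkeeping correction: the object you place in $\mathcal{F}^{n,3}$ should be the entire removed set $\{\vec{P}: P_3\le P_{T,3}\}$ (a tree with the same top, hence the same $|I_T|$ and, by the same termination argument, the same size bound $\le 2^{n+1}$) rather than the selected lacunary tree $T$ alone; otherwise the tiles you strip out of the collection but do not put into $T$ are never assigned to any forest and the stated union fails to exhaust $\vec{\mathbf{P}}$.
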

\begin{proof}
See Corollary 6.4 in \cite{DTT}.
\end{proof}

One gets nearly identical partition results for the $P_2$ and $P_1$ sequences using the energy results described for them, except that there is no presence of $(2^{-n})^{1/\mu}$ in these cases.

\section{Estimating the Four-Linear Form}
The application of the sizes, energies, and weak-type interpolation is fairly standard (for example, as in the  article which inspired the present work, \cite{DTT}), but we reproduce the procedure here.  

We now state a basic lemma.  It essentially comes from the intuition that one can estimate
\[
\left|\sum_n a_nb_nc_n\right|\le \|a_n\|_{\ell^2}\|b_n\|_{\ell^2}\|c_n\|_{\ell^\infty}
\]
\begin{lemma}\label{basiclemma}
Suppose that $T$ is a $t$-tree contained in $\vec{\mathbf{P}}$.  This means it is a $t'$-lacunary tree for $t'\neq t$.  As before, let
\begin{enumerate}
\item $a_{P_1}=\langle f_3,\phi_P^1\rangle$
\item $a_{P_2}=\left\langle B_P(f_1,f_2), \phi_P^2\right\rangle$
\item $a_{P_3}=\left\langle f_4,\tilde{\phi}_P^3\right\rangle$
\end{enumerate}
Then
\begin{align*}
\left|\sum_{\vec{P}\in T}\right. &\left|\frac{1}{|I_P|^{1/2}}\langle f_3, \phi_P^1\rangle\left\langle B_P(f_1,f_2), \phi_P^2\right\rangle\left\langle f_4,\tilde{\phi}_P^3\right\rangle\right| \lesssim \\
\sum_{\vec{P}\in T}&\left|\frac{1}{|I_P|^{1/2}}\langle f_3, \phi_P^1\rangle\left\langle B_P(f_1,f_2), \phi_P^2\right\rangle\left\langle f_4,\tilde{\phi}_P^3\right\rangle\right| \lesssim \\
&|I_T|\sizee_1((a_{P_1})_{\vec{P}\in T})\cdot\sizee_2((a_{P_2})_{\vec{P}\in T})\cdot\sizee_3((a_{P_3})_{\vec{P}\in T}).
\end{align*}
\end{lemma}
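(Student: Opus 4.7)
The plan is to realize the heuristic $|\sum a_n b_n c_n| \le \|a\|_{\ell^2}\|b\|_{\ell^2}\|c\|_{\ell^\infty}$ literally in this setting. Fix some $t\in\{1,2,3\}$ for which $T$ is a $t$-tree, and let $t',t''$ denote the two remaining indices. The first inequality in the statement is just the triangle inequality, so I focus on the second. Distribute the $|I_P|^{-1/2}$ factor onto the $t$-coordinate and split as
\[
\frac{|a_{P_1}a_{P_2}a_{P_3}|}{|I_P|^{1/2}}=|a_{P_{t'}}|\,|a_{P_{t''}}|\cdot\frac{|a_{P_t}|}{|I_P|^{1/2}}.
\]
The first two factors will be controlled by Cauchy--Schwarz and the last by an $\ell^\infty$ bound through $\text{size}_t$.

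For the $\ell^\infty$ piece, note that any singleton $\{\vec P\}$ with $\vec P\in T$ is vacuously a $j$-tree for every $j\in\{1,2,3\}$, so in particular it is a $j$-tree for some $j\neq t$ and hence is $t$-lacunary. By the very definition of $\text{size}_t$,
\[
\frac{|a_{P_t}|}{|I_P|^{1/2}}=\left(\frac{1}{|I_{\vec P}|}|a_{P_t}|^2\right)^{1/2}\le \text{size}_t\bigl((a_{P_t})_{\vec P\in T}\bigr).
\]
This bound is uniform in $\vec P\in T$, so I may factor it out of the sum.

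For the $\ell^2$ pieces, since $T$ is a $t$-tree and $t'\neq t$, the tree $T$ is itself $t'$-lacunary and is therefore admissible in the supremum defining $\text{size}_{t'}$; similarly for $t''$. Hence
\[
\sum_{\vec P\in T}|a_{P_{t'}}|^2\le |I_T|\,\text{size}_{t'}^{\,2},\qquad \sum_{\vec P\in T}|a_{P_{t''}}|^2\le |I_T|\,\text{size}_{t''}^{\,2}.
\]
Applying Cauchy--Schwarz in the form $\sum|a_{P_{t'}}a_{P_{t''}}|\le(\sum|a_{P_{t'}}|^2)^{1/2}(\sum|a_{P_{t''}}|^2)^{1/2}$, pulling out the $\text{size}_t$ factor, and combining yields
\[
\sum_{\vec P\in T}\frac{|a_{P_1}a_{P_2}a_{P_3}|}{|I_P|^{1/2}}\le |I_T|\cdot\text{size}_1\cdot\text{size}_2\cdot\text{size}_3,
\]
which is the claimed estimate.

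There is no genuine obstacle here: the lemma is bookkeeping that follows directly from the definitions of $\text{size}_j$ and of a $t$-tree. The only point needing care is the asymmetric role of the three coordinates: only the two directions in which $T$ is lacunary admit an $\ell^2$-type bound via size, forcing the third (the $t$-direction) to be handled by the singleton-tree $\ell^\infty$ observation. Once this bookkeeping is recognized, the proof is immediate.
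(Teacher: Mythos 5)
Your proof is correct and follows exactly the paper's argument: the two lacunary directions are controlled in $\ell^2$ by the definition of size applied to $T$ itself, the remaining direction is controlled in $\ell^\infty$ via the singleton-tree observation, and Cauchy--Schwarz (the $\ell^2\times\ell^2\times\ell^\infty$ H\"{o}lder inequality) finishes. No meaningful differences from the paper's proof.
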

\begin{proof}
By the definition of size,
\[
\left(\sum_{\vec{P}\in T}|a_{P_{t'}}|^2\right)^{1/2}\lesssim |I_T|^{1/2}\size_{t'}((a_{P_{t'}})_{\vec{P}\in T}),
\]
for each $t'\neq t$.  For $t$, one has that a single tile is a tree, and so
\[
|a_{P_t}|\lesssim |I_{P}|^{1/2}\size_t((a_{P_t})_{\vec{P}\in T}).
\]
The claim then follows by the $\ell^2\times \ell^2\times \ell^\infty$ version of the H\"{o}lder inequality (basically just Cauchy-Schwarz).
\end{proof}
Supposing that $f_t\in X(E_t)$, this means it is enough (by restricted weak-type interpolation) to break up $\vec{\mathbf{P}}$ into trees $T$ where one can produce the estimate
\[
\sum_T |I_T|\size_1((a_{P_1})_{\vec{P}\in T})\cdot\size_2((a_{P_2})_{\vec{P}\in T})\cdot\size_3((a_{P_3})_{\vec{P}\in T})\lesssim |E_1|^{\alpha_1}|E_2|^{\alpha_2}|E_3|^{\alpha_3}|E_4|^{\alpha_4},
\]
for an admissible tuple $\alpha=(\alpha_1,\alpha_2,\alpha_3,\alpha_4)$, where $\alpha_1+\alpha_2+\alpha_3+\alpha_4=1$.\footnote{This last condition is clearly required since the operator in question behaves something like a pointwise product, and thus should satisfy H\"{o}lder-type estimates.}

As per the restricted weak-type interpolation theorems, we are allowed to remove a certain subset from the $E_n$ corresponding to a bad index (in the event that a bad index exists, or to any index in the event that no bad index exists).  The indices 1 and 2 are to be handled differently from the indices 3 and 4: the functions $f_1,f_2$ are mixed together, and so their will have to be treated in a slightly different way than those for $f_3,f_4$.  However, there is no difference between the methods used to handle 3 or 4.
\subsection{Estimates when $3$ or $4$ is the bad index}
  We will describe in detail how to do this for index 4 being bad; the index 3 case can be done completely analogously.

We will now define the exceptional set.  For $C>0$, define $\Omega_C$ as
\[
\Omega_C:=\bigcup_{i=1}^4\{x:M(1_{E_i})\ge C|E_i|/|E_4|\},
\]
where $M$ is the usual Hardy--Littlewood maximal operator.  For sufficiently large $C$, we can guarantee that $|E_4/\Omega_C|\ge \frac{1}{2}|E_4|$.  Let $E_4'$ be $E_4\backslash\Omega_C$ for such a $C$.

Suppose that $f_1\in X(E_1),f_2\in X(E_2),f_3\in X(E_3),f_4\in X(E_4')$, and let $\vec{\mathbf{P}}$ be a finite rank 1 collection of tri-tiles.  We partition $\vec{\mathbf{P}}$ as follows: let $\vec{\mathbf{P}}_{l}$ be the collection of tri-tiles $\vec{P}$ such that $I_{\vec{P}}$ satisfies
\[
2^l\le 1+\frac{\textrm{dist}(I_{\vec{P}},\mathbb{R}/\Omega)}{|I_{\vec{P}}|} \le 2^{l+1}.
\]
We will then have to sum over $l$.  We shall find that we get an exponential gain of $2^{-l}$, so this will not be an issue.  Observe that, from our size estimates that for such collections of tiles,
\[
\size_1(a_{P_1})_{\vec{P}\in\vec{\mathbf{P}}})\lesssim \frac{|E_3|}{|E_4|}2^l
\]
\[
\size_2(a_{P_2})_{\vec{P}\in\vec{\mathbf{P}}})\lesssim \frac{|E_1|^\theta|E_2|^{1-\theta}}{|E_4|}2^l
\]
and
\[
\size_3(a_{P_3})_{\vec{P}\in\vec{\mathbf{P}}})\lesssim 2^{(1-M)l},
\]
where $M$ is the exponent from the definition of adaptedness to a tile.

Now, using Lemma \ref{12partitionlemma} (and the appropriate analogues for $P_1$ and $P_2$), generate families $\mathcal{F}^{n,1}$, $\mathcal{F}^{n,2}$, and $\mathcal{F}^{n,3}$.  After discarding tiles with $\langle f,\phi_P^t\rangle$, say, to zero, one can perform the partition,
\[
\vec{\mathbf{P}}^{l}=\bigcup_{m_1,m_2,m_3}\mathbf{S}^{m_1}\cap\mathbf{S}^{m_2}\cap\mathbf{S}^{m_3},
\]
where $\mathbf{S}^{m_t}:=\bigcup_{T\in\mathcal{F}^{m,t}}T$ and we assume implicitly that
\[
2^{m_t}\le\size_t((a_{P_t})_{\vec{P}\in\vec{\mathbf{P}}}).
\]
One can further partition,
\[
\vec{\mathbf{P}}^l=\bigcup_{j=1}^3\bigcup_{m_1,m_2,m_3:m_j=\max\{m_1,m_2,m_3\}}\bigcup_{T\in\mathcal{F}^{m_j,j}}(T\cap\mathbf{S}^{m_1}\cap\mathbf{S}^{m_2}\cap\mathbf{S}^{m_3}).
\]
Losing a factor of $3$ in the estimates, we may drop the union over $j$ and assume that
\[
\vec{\mathbf{P}}^l=\bigcup_{m_1,m_2,m_3:m_j=\max\{m_1,m_2,m_3\}}\bigcup_{T\in\mathcal{F}^{m_j,j}}(T\cap\mathbf{S}^{m_1}\cap\mathbf{S}^{m_2}\cap\mathbf{S}^{m_3}).
\]
It is worth observing that $T\cap\mathbf{S}^{m_1}\cap\mathbf{S}^{m_2}\cap\mathbf{S}^{m_3}$ is still a tree with the same top as $T$ and, by the sub-tree properties of the partition from Lemma \ref{12partitionlemma}, we have that its size is at most $2^{m_j+1}$.  Thus we must finally verify that
\[
\sum_{m_1,m_2,m_3:m_j=\max\{m_1,m_2,m_3\}}\sum_{T\in\mathcal{F}^{m_j,j}}|I_T|2^{m_1+m_2+m_3}\lesssim 2^{-l}|E_1|^{\alpha_1}|E_2|^{\alpha_2}|E_3|^{\alpha_3}|E_4|^{\alpha_4}.
\]
Suppose that $a_1+a_2+a_3=1=\frac{1-a_1}{2}+\frac{1-a_2}{2}+\frac{1-a_3}{2}$ for $0\le a_1,a_2,a_3\le1$.  Let $0<\theta<1$.  By Lemma \ref{12partitionlemma} and its two variants, we have that
\begin{align}\label{tempytemperton}
&\sum_{T\in\mathcal{F}^{m_j,j}}|I_T|\lesssim 2^{-2m_j}\left(|E_3|\right)^{\frac{1-a_1}{2}}\left(|E_1|^\theta|E_2|^{1-\theta}\right)^{\frac{1-a_2}{2}}\\
&\times\left(|E_4|\left(2^{-m_j}|E_4|^{-1/2}\right)^{2/\mu}\right)^{\frac{1-a_3}{2}}
\end{align}

Since we assumed implicitly that 
\[
2^{m_t}\le \size_t((a_{P_t})_{\vec{P}\in\vec{\mathbf{P}}}),
\]
we have that, for the same $a_1,a_2,a_3$:
\begin{align*}
2^{m_1+m_2+m_3}&=2^{m_1(1-a_1)+m_2(1-a_2)+m_3(1-a_3)}2^{m_1a_1+m_2a_2+m_3a_3}\\
&\le 2^{m_j}\prod_{i\neq j}\size_i((a_{P_i})_{\vec{P}\in\vec{\mathbf{P}}})^{a_i}2^{m_i(1-a_i)}
\end{align*}
Thus by summing up the geometric sums over $m_i$, which cap out at $m_j$, one has
\begin{align*}
&\sum_{m_1,m_2,m_3:m_j=\max\{m_1,m_2,m_3\}}\sum_{T\in\mathcal{F}^{m_j,j}}|I_T|2^{m_1+m_2+m_3}\lesssim\\
& \prod_{i\neq j}\size_i((a_{P_i})_{\vec{P}\in\vec{\mathbf{P}}})^{a_i}\sum_{m_j}2^{m_j}\left(\prod_{i\neq j}2^{m_j(1-a_i)}\right)\sum_{T\in\mathcal{F}^{m_j,j}}|I_T|
\end{align*}
Now, plugging in (\ref{tempytemperton}), and summing over the final geometric series and carefully doing some arithmetic on the exponents, one can majorize the previous expression by
\begin{align*}
&\left(\prod_{i=1}^3 \size_i((a_{P_i})_{\vec{P}\in\vec{\mathbf{P}}})^{a_i}\right) \left(\size_j((a_{P_j})_{\vec{P}\in\vec{\mathbf{P}}})\right)^{-(1-a_3)/\mu}\\
&\times|E_3|^{(1-a_1)/2}|E_1|^{\theta(1-a_2)/2}|E_2|^{(1-\theta)(1-a_2)/2}|E_4|^{(1-a_3)/2}|E_4|^{-(1-a_3)/\mu}|E_4|^{-1}.
\end{align*}
Let $(1-a_3)/\mu=\epsilon$.  Observe that the presence of the $\size_j^{-\epsilon}$ term is harmless except that it effectively changes the factor of $|E_4|^{-\epsilon}$ to $|E_3|^{-\epsilon}$ if $j=1$, and so on.  This can be remedied quite easily.  Supposing that $j=1$, pick $\alpha_1'=\alpha_1+\epsilon$ (which is ok for ``most'' choices of $\alpha_1$ since $\mu$ can be taken very large) and making the appropriate change $\alpha_3'=\alpha_1-\epsilon$.  Thus the $-\epsilon$ can always be pushed onto $E_4$.  But the key point is that one gets a weak-type estimate for all $\epsilon$, so one can get estimates arbitrarily close to $\epsilon=0$.  Thus we ignore this technicality.  We can thus majorize the previous expression by quantities arbitrarily close to
\begin{align*}
&|E_3|^{(1+a_1)/2}|E_1|^{\theta(1+a_2)/2}|E_2|^{(1-\theta)(1+a_2)/2}|E_4|^{(1+a_3)/2}|E_4|^{-1}\\
=&|E_3|^{(1+a_1)/2}|E_1|^{\theta(1+a_2)/2}|E_2|^{(1-\theta)(1+a_2)/2}|E_4|^{(-1+a_3)/2}
\end{align*}
whenever $0<a_1,a_2,a_3<1$ with $a_1+a_2+a_3=1$ and $0<\theta<1$.  All the associated tuples are admissible tuples, and hence our 4-linear form $\Lambda$ is of restricted weak-type for all such $\alpha,\theta$ pairs.  If one picks:
\begin{enumerate}
\item $a_1=2\alpha_3-1$,
\item $a_3=2\alpha_4+1$,
\item $a_2=2(\alpha_1+\alpha_2)-1$, and
\item $\theta=\alpha_1/(\alpha_1+\alpha_2)$,
\end{enumerate}
then the previous estimate becomes
\[
|E_1|^{\alpha_1}|E_2|^{\alpha_2}|E_3|^{\alpha_3}|E_4|^{\alpha_4},
\]
Of course the sum of the exponents is then $1$, and hence our 4-linear form is of restricted weak-type $\alpha$ whenever $\frac{1}{2}<\alpha_3<1$, $-\frac{1}{2}<\alpha_4<0$, $0< \alpha_1,\alpha_2< 1$, $\frac{1}{2}<\alpha_1+\alpha_2<1$, and $\alpha_1+\alpha_2+\alpha_3+\alpha_4=1$.  In particular, one gets a restricted weak-type estimate for 4-tuples arbitrarily close to
\[
\begin{array}{cccccc} 
(1,0,\frac{1}{2},-\frac{1}{2}) & (0,1,\frac{1}{2},-\frac{1}{2}) & (\frac{1}{2},0,1,-\frac{1}{2}) & (0,\frac{1}{2},1,-\frac{1}{2}).  \\
\end{array}
\]
One can do precisely the same analysis for $3$ being the bad index to get restricted weak-type estimates
\[
\begin{array}{cccccc} 
(1,0,-\frac{1}{2},\frac{1}{2}) & ( 1,0,-\frac{1}{2},\frac{1}{2}) & ( \frac{1}{2},0,-\frac{1}{2},1) & (0,\frac{1}{2},-\frac{1}{2},1).  \\
\end{array}
\]
\subsection{Estimates when $1$ or $2$ is the bad index}
Now, the operator can be estimated in nearly the same way, although there are some minor changes which we now describe.  We prove the estimates for $2$ being the bad index.  The case for $1$ being the bad index is completely analogous.

We define the exceptional set
\[
\Omega_C=\bigcup_{j=1}^4\{M(\chi_{E_j})>C|E_j|/|E_2|\},
\]
where again $M$ is the Hardy--Littlewood maximal operator.  For sufficiently large $C$, we can define $E_2'=E_2\backslash\Omega_C$ to get an appropriate major subset.

Now, we make two assumptions of a similar type to the ones we made before: we restrict to tiles $\vec{P}$ with
\[
2^k\le 1+\frac{\textrm{dist}(I_{\vec{P}},\mathbb{R}/\Omega_C)}{|I_{\vec{P}}|}\le 2^{k+1}
\]
and to tiles $\vec{Q}$ with
\[
2^{k'}\le1+\frac{\textrm{dist}(I_{\vec{Q}},\mathbb{R}/\Omega_C)}{|I_{\vec{Q}}|}\le 2^{k'+1},
\]
which is harmless provided we get summability in $k,k'$.  One then proceeds in exactly the same fashion, except that one needs to make the following changes to the $\size_3$ and $\energy_3$:
\[
\size_3((a_{P_2})_{\vec{P}\in\vec{\mathbf{P}}})\lesssim 2^{(-M\theta)k},
\]
where one must use the crude estimate $\int_{E_j}\tilde{\chi}_{I_{\vec{Q}}}^M\le |I_{\vec{Q}}|$.  We are already choosing $M$ depending on the exponent parameters, so the presence of $\theta$ is ok, provided it is nonzero.
We also get 
\[
\energy_3((a_{P_2})_{\vec{P}\in\vec{\mathbf{P}}})\lesssim 2^{-M\theta k'}|E_1|^{(2-\theta)/2}|E_2|^{(\theta-1)/2},
\]
for some $0<\theta<1$.  One gets summability in $k,k'$, so we may ignore their presence.  The estimate one gets as before (ignoring the small factor $1/\mu$) is
\[
\frac{|E_3|^{(1+a_1)/2}|E_4|^{(1+a_2)/2}}{|E_2|^{1-a_3}}\left(|E_1|^{(2-\theta)/2}|E_2|^{(\theta-1)/2}\right)^{1-a_3},
\]
where $0<a_1,a_2,a_3<1$ and $a_1+a_2+a_3=1$.  Now pick
\begin{enumerate}
\item $a_1=2\alpha_3-1$,
\item $a_2=2\alpha_4-1$,
\item $a_3=2(\alpha_1+\alpha_2)+1$, and
\item $\theta=(3\alpha_1+2\alpha_2)/(\alpha_1+\alpha_2)$.
\end{enumerate}
This numerology transforms the previous line to
\[
|E_3|^{\alpha_3}|E_4|^{\alpha_4}|E_1|^{\alpha_1}|E_2|^{\alpha_2}.
\]
i.e. producing a weak-type $\alpha$ estimate.  One may now check that tuples arbitrarily close to the following are available:
\[
\begin{array}{ccc}
(1,-\frac{3}{2},\frac{1}{2},1) & (1,-\frac{3}{2},1,\frac{1}{2}).
\end{array}
\]
By doing the same analysis for 1 being the bad index, one gets
\[
\begin{array}{ccc}
(-\frac{3}{2},1,\frac{1}{2},1) & (-\frac{3}{2},1,1,\frac{1}{2}).
\end{array}
\]
\section{Main Result}

\begin{proof}[Proof of Theorem 1.1]
The above establishes that the 4-linear form given by
\[
\Lambda(f_1,f_2,f_3,f_4)=
\sum_{P\in\vec{\mathbf{P}}}\frac{1}{|I_P|^{1/2}}\langle f_3, \phi_P^1\rangle\left\langle B_P(f_1,f_2), \phi_P^2\right\rangle\left\langle f_41_{|I_P|\ge 2^{N_2(x)}},\phi_P^3\right\rangle,
\]
where
\[
B_P(f_1,f_2):=\sum_{Q\in\vec{\mathbf{Q}}:\omega_{Q_3}\subset \omega_{P_2}}\frac{1}{|I_Q|^{1/2}}\langle f_1, \phi_{Q}^1\rangle\langle f_2, \phi_{Q}^2\rangle\phi_{Q}^3,
\]
where $\vec{\mathbf{P}}$ and $\vec{\mathbf{Q}}$ are finite, rank-1 families of tritiles satisfies restricted weak-type estimates arbitrarily close to the following points twelve points in $\R^4$:
\[
\begin{array}{cccccc} 
(1,-\frac{3}{2},\frac{1}{2},1) & (1,-\frac{3}{2},1,\frac{1}{2})&(-\frac{3}{2},1,\frac{1}{2},1) & (-\frac{3}{2},1,1,\frac{1}{2}),\\
\\
(1,0,-\frac{1}{2},\frac{1}{2}) & ( 1,0,-\frac{1}{2},\frac{1}{2}) & ( \frac{1}{2},0,-\frac{1}{2},1) & (0,\frac{1}{2},-\frac{1}{2},1),  \\
\\
(1,0,\frac{1}{2},-\frac{1}{2}) & (0,1,\frac{1}{2},-\frac{1}{2}) & (\frac{1}{2},0,1,-\frac{1}{2}) & (0,\frac{1}{2},1,-\frac{1}{2}).
\end{array}
\]
We claim that it follows that $\Lambda$ is of restricted weak type $\alpha$ for every $\alpha$ in the interior of the convex hull of these twelve points.  This follows by standard arguments, but we give them fairly explicitly here.  It may be useful to consult \cite[Figure 1]{MTTBiest1} to get a visual of this discussion.  Observe that points arbitrarily close to each of $(1,0,0,0),(0,1,0,0),(0,0,1,0),(0,0,0,1)$ can be written as (strictly) convex linear combinations of four points arbitrarily close to the twelve points listed above; in particular, this can be done so that the four chosen points are negative in pairwise different coordinates.  For example, observe that, for any small $\theta>0$,
\[
\begin{pmatrix}1\\0\\0\\0\end{pmatrix}=\left(\frac{3}{10}-\theta\right)\begin{pmatrix}1\\1/2\\1\\-3/2\end{pmatrix}+\left(\frac{1}{5}+\theta\right)\begin{pmatrix}1\\1/2\\-3/2\\ 1\end{pmatrix}+\left(\frac{1}{2}-5\theta\right)\begin{pmatrix}1\\-1/2\\0\\1/2\end{pmatrix}+\theta\begin{pmatrix}1\\-1/2\\1/2\\0\end{pmatrix}.
\]
One can modify this example slightly to write $(1-\epsilon_1,\epsilon_2,\epsilon_3,\epsilon_4)$ as a convex combination of similarly modified versions of the four vector listed above.  Then by Theorem \ref{goodtuples}, one gets that $\Lambda$ is restricted weak-type at every \emph{good} tuple in the interior of the convex hull of $(1,0,0,0),(0,1,0,0),(0,0,1,0),(0,0,0,1)$.  But then one can write any element of the interior of the convex hull of these twelve points in terms of a (strictly) convex linear combination of two good tuples and two tuples which are bad at the same index and invoke Lemma \ref{badtuples1}.  We stress again that it may be useful to consult \cite[Figure 1]{MTTBiest1}.  Thus $\Lambda$ is actually restricted weak-type everywhere in the interior of the convex hull of these twelve points.  
\end{proof}
The previous theorem, together with the weak-type interpolation result found in Lemma \ref{restrictedweaktostrong}, establish the following theorem theorem.
\begin{theorem}\label{mainresult}
Define $T(f_1,f_2,f_3)$ by
\begin{align*}
\left|\sum_{i\gg j}\int\right.&\left.\int_{\mathbb{R}^3}\hat{f_1}(\xi_1)\hat{f_2}(\xi_2)\hat{f_3}(\xi_3)\theta_i(\xi_2-\xi_1)\right.\\ &\left.\phi_j\left(\xi_3-\frac{\xi_1+\xi_2}{2}\right)e^{2\pi ix(\xi_1+\xi_2+\xi_3)}d\xi1_{j\ge N_2(x)}\right|,
\end{align*}
where $N_2(x)$ is an arbitrary, integer-valued function on $\mathbb{R}$ and $\theta_i$ and $\phi_j$ are defined as they were in earlier sections.  Let $D$ denote the interior of the convex hull in $\{(1/p_1,1/p_2,1/p_3,1/p_4):\sum 1/p_i=1\}$ of the 4-tuples given in the preceding proof.  Suppose that $1<p_1,p_2,p_3\le\infty$ and $1\le p_4'<\infty$ where $1/p_4'=1-1/p_4$ are such that $(1/p_1,1/p_2,1/p_3,1/p_4)$ is in $D$.  Then
\[
T:L^{p_1}\times L^{p_2}\times L^{p_2}\rightarrow L^{p_4'}.
\]
\end{theorem}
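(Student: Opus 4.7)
The plan is to combine the restricted weak-type bounds on the discrete model from Theorem 1.1 with the restricted-weak-to-strong upgrade (Lemma \ref{restrictedweaktostrong}), and then transport the resulting strong bound back to the continuous operator $T$ using the discretization of Section 3 in reverse.

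Fix an admissible tuple $\alpha = (1/p_1, 1/p_2, 1/p_3, 1/p_4) \in D$. Because $D$ is open and Theorem 1.1 produces restricted weak-type estimates throughout $D$, $\Lambda$ is of restricted weak-type on an entire open neighborhood of $\alpha$. If $\alpha$ is a good tuple, the strong bound
\[
|\Lambda(f_1,f_2,f_3,f_4)| \lesssim \|f_1\|_{p_1}\|f_2\|_{p_2}\|f_3\|_{p_3}\|f_4\|_{p_4}
\]
follows from Theorem \ref{goodtuples} together with the standard strong-type interpolation theorem stated immediately after it. If $\alpha$ has a bad index — necessarily index $4$ in the regime where the target $L^{p_4'}$ has $p_4' < 1$, with the other three cases handled by permutation — the same strong bound follows from Lemma \ref{restrictedweaktostrong}. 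Crucially, every constant in this chain is independent of the finite rank 1 families $\vec{\mathbf{P}}, \vec{\mathbf{Q}}$ and of the linearizing function $N_2(x)$.

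Now I would reverse the discretization. Dualize $T(f_1,f_2,f_3)$ against a fourth function $f_4$ and linearize the supremum over $k_2$ by an arbitrary integer-valued $N_2(x)$, exactly as in the sequence of assumptions in Section 3. Using the $m=0$ reduction from the Taylor-series step, the resulting $4$-linear form on $T$ is then expressed as an absolutely convergent sum, over $n \in \mathbb{Z}^4$ and shifts $\sigma, \sigma'$, of discrete models $\Lambda_{n,\sigma,\sigma'}$ of the exact form considered above, multiplied by the rapidly decaying coefficients $C(n)$ produced by the Fourier-series expansions of $\theta_i$ and $\phi_j$. Applying the uniform strong bound to each summand, summing in $n$, and taking the supremum over $N_2$ yields the strong $L^{p_1} \times L^{p_2} \times L^{p_3} \to L^{p_4'}$ bound for compactly supported test functions; a standard density argument then extends it to arbitrary $f_i \in L^{p_i}$.

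The main obstacle is the book-keeping of uniformity in $N_2$. The $N_2$-dependence enters only through $\tilde{\phi}_P^3 = \phi_P^3 \mathbf{1}_{|I_P| \ge 2^{N_2(x)}}$, and the crucial energy estimate (Lemma \ref{dttenergy}) is stated independently of this indicator; the only price is the $1/\mu$-loss, which produces an $\varepsilon$-perturbation of the exponents of the $|E_i|$ factors in the weak-type inequality. Because $D$ is open and $\mu$ can be chosen arbitrarily large, this $\varepsilon$ can always be absorbed into the neighborhood of $\alpha$ — which is precisely why the theorem requires $\alpha$ in the interior of $D$ rather than on the boundary. A secondary technicality is verifying that the open neighborhood of $\alpha$ provided by Theorem 1.1 lies inside the admissible region $1 < p_1, p_2, p_3 < \infty$, $1 \le p_4' < \infty$ required by Lemma \ref{restrictedweaktostrong}, which is a direct inspection of the twelve vertices.
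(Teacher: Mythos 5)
Your proposal is correct and follows essentially the same route as the paper: the paper's (one-sentence) proof is precisely Theorem 1.1 for the discrete model combined with Lemma \ref{restrictedweaktostrong}, with the reduction from the continuous operator $T$ to the model operator already carried out (uniformly in $N_2$ and in the families $\vec{\mathbf{P}},\vec{\mathbf{Q}}$) by the chain of assumptions in Section 3. Your version merely makes explicit the summation over the discretization parameters $n,\sigma,\sigma'$ and the uniformity book-keeping that the paper leaves implicit.
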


\begin{corollary}
Suppose that $T$ is as in the previous theorem.  Then $T:L^2\times L^2\times L^2\rightarrow L^{2/3}$ is bounded.
\end{corollary}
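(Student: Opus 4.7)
The plan is to derive the corollary as a direct specialization of the previous theorem, Theorem \ref{mainresult}, at the admissible tuple $\alpha=(1/2,1/2,1/2,-1/2)$. First I would set up the duality: the target $L^{2/3}$ has exponent $p_4'=2/3$, giving $1/p_4 = 1 - 1/p_4' = -1/2$, so that the tuple associated with the bound $\|T(f_1,f_2,f_3)\|_{L^{2/3}}\lesssim \|f_1\|_{L^2}\|f_2\|_{L^2}\|f_3\|_{L^2}$ is exactly $(1/p_1,1/p_2,1/p_3,1/p_4)=(1/2,1/2,1/2,-1/2)$. This is precisely the sample interior point of $D$ singled out in the statement of Theorem 1.1, and its bad index is the fourth coordinate.

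Next, I would verify that $\alpha\in D$ by writing it as a strict convex combination of (some of) the twelve listed vertices. Exploiting the $1\leftrightarrow 2$ symmetry of the vertex list, I would look for a symmetric combination that draws on vertices from several of the bad-index groups, modeled on the explicit decomposition of $(1,0,0,0)$ given in the proof of Theorem 1.1. Once $\alpha\in D$ is confirmed, Theorem \ref{mainresult} delivers the desired estimate on test functions supported on sets of finite measure, and a standard density/truncation argument extends it to arbitrary $f_i\in L^2$.

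The main obstacle is the convex-hull verification. Since $-1/2$ is the minimum fourth coordinate attained among the twelve vertices, any convex combination of vertices whose fourth coordinate evaluates to exactly $-1/2$ must concentrate all of its mass on the four vertices $(1,0,1/2,-1/2),\,(0,1,1/2,-1/2),\,(1/2,0,1,-1/2),\,(0,1/2,1,-1/2)$. A naive symmetric averaging within this face places $\alpha$ on the edge joining the first two, rather than in the face's relative interior. Thus the interior-point certificate must be produced by a face-adapted convex-combination argument, in the same spirit as the interior-point step at the end of the proof of Theorem 1.1, rather than by a uniform averaging over vertices. This is the one genuinely non-formal step in the proof; once it is carried out, invoking Theorem \ref{mainresult} and extending by density are routine.
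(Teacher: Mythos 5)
Your reduction of the corollary to Theorem \ref{mainresult} at the tuple $\alpha=(1/2,1/2,1/2,-1/2)$ is the route the paper intends, and your duality bookkeeping ($1/p_4=1-1/p_4'=-1/2$) is correct. The problem is the final step, and the obstacle you flag is not a technicality that a ``face-adapted convex-combination argument'' can circumvent: your own computation proves that $\alpha$ is \emph{not} in the relative interior of the convex hull of the twelve vertices. The affine functional $\beta\mapsto\beta_4$ attains its minimum value $-\frac12$ over the twelve vertices exactly at the four vertices $(1,0,\frac12,-\frac12)$, $(0,1,\frac12,-\frac12)$, $(\frac12,0,1,-\frac12)$, $(0,\frac12,1,-\frac12)$; hence $\{\beta_4=-\frac12\}$ is a supporting hyperplane and every point of the hull with fourth coordinate $-\frac12$ lies on a proper face, disjoint from the relative interior. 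Worse, as you observe, matching the third coordinate within that face forces all the mass onto the first two of these vertices, so $\alpha$ is the midpoint of an edge. No choice of convex combination can certify a relative-boundary point as interior, so the proposed invocation of Theorem \ref{mainresult} at $\alpha$ cannot be completed.

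What your computation actually shows is that the paper's assertion that $(1/2,1/2,1/2,-1/2)$ is an interior point of $D$ is false as stated, so the corollary is an endpoint claim that does not follow from Theorem \ref{mainresult} by interpolation alone; the gap is inherited from the paper rather than introduced by you, but it is a genuine gap in your write-up as well. The estimates actually established (e.g.\ the bad-index-$4$ region $\frac12<\alpha_3<1$, $-\frac12<\alpha_4<0$, $0<\alpha_1,\alpha_2<1$, $\frac12<\alpha_1+\alpha_2<1$) only yield exponent tuples in $D$ arbitrarily close to $\alpha$, i.e.\ bounds $L^{p_1}\times L^{p_2}\times L^{p_3}\to L^{p_4'}$ with $p_4'$ strictly larger than $2/3$; moreover Lemma \ref{restrictedweaktostrong} requires restricted weak-type estimates in a full open neighborhood of $\alpha$, which would need tuples with $\alpha_4<-\frac12$ that are nowhere established. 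To salvage the corollary one must either prove restricted weak-type estimates in a genuine neighborhood of the endpoint (new vertices with $\alpha_4<-\frac12$), or settle for the off-endpoint family of estimates; your proposal as written does neither.
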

This corollary is of particular interest since we get a strong bound into $L^{2/3}$.  All the ``trivial'' methods of estimation require putting one of the $f_i$ into $L^\infty$ and then using previous methods to make estimations on the remaining objects; however, the only estimates available have either the other $f_i$ in $L^p$ and $L^q$ where either $p^{-1}+q^{-1}=1$ (if $f_2\in L^\infty$ and applying H\"{o}lder on the tensor product of two maximal operators) or $p^{-1}+q^{-1}>3/2$ (either $f_1$ or $f_3$ in $L^\infty$ and applying time-frequency analysis in the spirit of of Lacey's original paper on the maximal bilinear operator, \cite{LBMO}, or the relevant special case of Demeter, Tao, Thiele, \cite{DTT}).  Either way, one cannot produce bounds using the prior estimates so that the target space is actually $L^{2/3}$.

\bibliography{thesis_paper_1}
\bibliographystyle{plain}

\end{document}